\definecolor{kth blue}{RGB}{26,84,166}
\definecolor{ist green}{RGB}{57,74,19}
\definecolor{easy green}{RGB}{60,179,113}
\renewcommand*{\eqref}[1]{%
  \hyperref[{#1}]{\textup{\tagform@{\ref*{#1}}}}%
}
    \let\@internalcite\cite
    \def\cite{\def\citeauthoryear##1##2{##1, ##2}\@internalcite}
    \def\shortcite{\def\citeauthoryear##1{##2}\@internalcite}
    \def\@biblabel#1{\def\citeauthoryear##1##2{##1, ##2}[#1]\hfill}
\theoremstyle{plain}
   \newtheorem{theorem}{Theorem}
   \newtheorem{proposition}[theorem]{Proposition}
   \newtheorem{lemma}[theorem]{Lemma}
\theoremstyle{definition}
   \newtheorem{definition}{Definition}
   \newtheorem{example}{Example}
\theoremstyle{remark}
 \newtheorem{remark}{Remark}
\newcommand{\A}{\mathcal{A}}
\newcommand{\GG}{\mathcal{G}}
\newcommand{\HH}{\mathcal{H}}
\newcommand{\I}{\mathcal{I}}
\newcommand{\J}{\mathcal{J}}
\newcommand{\M}{\mathcal{M}}
\newcommand{\qleftarrow}{ \! \leftarrow\!\!\! \circ  \,}
\newcommand{\qrightarrow}{\,\circ\!\!\! \rightarrow \! }
\newcommand\independent{\protect\mathpalette{\protect\independenT}{\perp}}
\def\independenT#1#2{\mathrel{\rlap{$#1#2$}\mkern2mu{#1#2}}}
\def\newop#1{\expandafter\def\csname #1\endcsname{\mathop{\rm
#1}\nolimits}}
\begin{document}
\title[Interventional Markov Equivalence for Mixed Graph Models]{Distributional Invariances and Interventional Markov Equivalence for Mixed Graph Models}

\author{Liam Solus}
\address{Matematik, KTH, SE-100 44 Stockholm, Sweden}
\email{solus@kth.se}

\date{\today}

\begin{abstract}
The invariance properties of interventional distributions relative to the observational distribution, and how these properties allow us to refine Markov equivalence classes (MECs) of DAGs, is central to causal DAG discovery algorithms that use both interventional and observational data.  
Here, we show how the invariance properties of interventional DAG models, and the corresponding refinement of MECs into interventional MECs, can be generalized to mixed graphical models that allow for latent cofounders and selection variables.  
We first generalize interventional Markov equivalence to all formal independence models associated to loopless mixed graphs. 
For ancestral graphs, we prove the resulting interventional MECs admit a graphical characterization generalizing that of DAGs.  
We then define interventional distributions for acyclic directed mixed graph models, and prove that this generalization aligns with the graphical generalization of interventional Markov equivalence given for the formal independence models. 
This provides a framework for causal model discovery via observational and interventional data in the presence of latent confounders that applies even when the interventions are uncontrolled. 
\end{abstract}

\maketitle
\thispagestyle{empty}

\section{Introduction}
\label{sec: introduction}

A major goal in statistics is to predict the effects of interventions amongst jointly distributed random variables.  
This is relevant in fields such as medicine, computational biology, and economics where practitioners are interested in altering predicted outcomes \cite{P00,RHB00,SGS01}.  
Oftentimes, the goal is to learn a causal DAG, which encodes a set of conditional independence (CI) relations and whose edges encode causal relations amongst the variables.  
However, using observational data alone, it is only possible to learn a causal DAG up to the CI relations it encodes, which are the same for all DAGs in its Markov equivalence class (MEC) \cite{AMP97}.  

By further sampling from interventional distributions, i.e.,~distributions produced by modifying variables within the system, identifiability of a causal DAG can be improved.  
For example, perfect (or hard) interventions \cite{EGS05}, which force the targeted variables to the values of independent variables, partition an MEC of DAGs into perfect $\I$-MECs \cite{HB12}. 
Similarly, general interventions, which do not necessarily force the targeted variables to become independent, partition an MEC of DAGs into (general) $\I$-MECs \cite{YKU18}.  
Recently, \citet{HB12} and \citet{YKU18} gave graphical characterizations of perfect $\I$-MECs and $\I$-MECs for DAGs, respectively.  
These characterizations play a key role in algorithms that use observational and interventional data to learn causal DAGs \cite{HB12,WSKU17,YKU18}. 

While DAG models are a natural starting point for causal inference, they make restrictive assumptions that can be alleviated by working with more general mixed graph models.  
For example, biologists are interested in learning protein synthesis networks that use both directed and bidirected arrows \cite{Sachs05}.  
Mixed graph models associate CI relations to a graph with edges that may be directed, bidirected, or undirected, and thereby incorporate latent confounders and selection variables into the model.  
The goal then is to relate the interventional and observational distributions so as to learn the signaling network exactly; that is, to learn exactly which edge types are in which locations.  
For DAG models this is done by studying the invariances in the factorization of the interventional distribution with respect to a DAG relative to the factorization for the observational distribution. 
In \cite{YKU18}, the authors showed that these invariance properties of DAG model factorizations are encoded entirely by combinatorial separation properties of DAGs.  
Thus, to generalize the theory of interventional Markov equivalence for DAG models to mixed graph models, there are two avenues to consider:  (1) generalize the distributional invariance properties of DAG models to models admitting similar factorization criteria, and (2) generalize the invariance properties as they are encoded via combinatorial separation statements in mixed graphs.  
Here, we will study both of these generalizations to the fullest extent possible (as permitted by the existing theory of mixed graph models).  
In doing so, we will provide a complete generalization of the theory of interventional Markov equivalence for DAG models (i.e., a theory unifying both (1) and (2)) to the family of directed ancestral graph models.  

To do so, we first generalize interventional Markov equivalence to all formal independence models for loopless mixed graphs.  
Under mild assumptions on the intervention targets, we extend the graphical characterization of interventional Markov equivalence of \cite{YKU18} to all ancestral graphs (AGs).  
We then define interventional distributions for acyclic directed mixed graph models (ADMGs) based on invariance properties of their factorization criterion.  
We prove that any distribution arising as the marginal of a post-intervention distribution Markov to a DAG over the observed variables will be an interventional distribution with respect to the ADMG that is the latent projection of the DAG onto the observed variables.
Finally, we prove that our generalization of interventional Markov equivalence for AGs via formal independence models and the probabilistic version for ADMGs coincide at their intersection; i.e., for all directed AGs. 
In this case, we require no assumptions on the intervention targets. 
In particular, the theory developed here extends the recent work of \cite{KJSB19} who studied interventional Markov equivalence when the interventions are \emph{controlled}; an assumption restricting the distributions that can be considered.
The theory presented here applies to any mixture of observational and interventional data, even for uncontrolled interventions.

\section{Formal Independence Models and Mixed Graphs} 
\label{sec: independence models and mixed graphs}

We first recall the necessary basics of mixed graph models, formal independence models, and interventions.  
All graph theory terms that are not defined in the following are defined in the appendix.
A \emph{mixed graph} is a graph $\GG = ([p],E)$ in which the set of edges $E$ contains a mixture of \emph{undirected} $i-j$, \emph{bidirected} $i \leftrightarrow j$, and \emph{directed} $i\rightarrow j$ edges.  
If it is ambiguous as to whether or not there is an arrowhead at the endpoint $i$ of an edge $e$, we will place an empty circle at the endpoint $i$; for example, $i \qrightarrow j$.  
A \emph{loopless mixed graph} (LMG) is a mixed graph that does not contain any \emph{loops}; i.e.,~edges which have both endpoints being the same node.   
We will call an LMG \emph{simple} if between any two nodes there is at most one edge.
We can define a notion of separation within an LMG that generalizes the notion of $d$-separation in DAGs \cite{SL14}:
Let $\GG = ([p],E)$ be an LMG and $C\subset[p]$. 
A path $\langle v_1,v_2,v_3\rangle$ in $\GG$ is a \emph{collider path} if the two edges on the path both have arrows pointing towards $v_2$.  
It is further called a \emph{v-structure} if $v_1$ and $v_3$ are non-adjacent in $\GG$.  
A path $\pi$ is \emph{$m$-connecting} given $C$ in $\GG$ if all of its collider subpaths $\langle v_1,v_2,v_3\rangle$ have $v_2\in C\cup\an_\GG(C)$ and all of its nodes that are not $v_2$ for some collider subpath are not in $C$.  
For two disjoint subsets $A,B\subset[p]$, we say $C$ \emph{$m$-separates} $A$ and $B$ in $\GG$ if there is no $m$-connecting path between a node in $A$ and a node in $B$ given $C$ in $\GG$.  

To an LMG $\GG = ([p],E)$, we associate a collection of triples $\J(\GG)$, where
$
\langle A, B \mid C\rangle \in \J(\GG)
$
if and only if $A$ and $B$ are $m$-separated given $C$ in $\GG$.  
We call the set of triples $\J(\GG)$ the \emph{(formal) independence model} for $\GG$.  
In general, $\GG$ is not uniquely determined by $\J(\GG)$, which leads to the notion of Markov equivalence:
\begin{definition}
\label{def: markov equivalence}
Two LMGs $\GG$ and $\HH$ are \emph{Markov equivalent}, denoted $\GG\approx\HH$, and belong to the same \emph{Markov equivalence class} (MEC) if $\J(\GG) = \J(\HH)$.  
\end{definition}
A distribution $\mathbb{P}$ over random variables $X_1,\ldots,X_p$ is \emph{Markov} with respect to $\GG$ if $\mathbb{P}$ entails $X_A\independent X_B\mid X_C$ whenever $\langle A, B\mid C\rangle\in \J(\GG)$.  
We let $\M(\GG)$ denote the collection of all density functions of distributions $\mathbb{P}$ that are Markov to $\GG$.  
If there exists $\mathbb{P}$ such that $\J(\GG)$ encodes precisely the set of CI relations entailed by $\mathbb{P}$ then we call $\J(\GG)$ a \emph{probabilistic independence model}.  
A number of families of LMGs yield well-studied probabilistic independence models.

The structure of an LMG $\GG = ([p],E)$ is intimately tied to the factorization of the distributions in $\M(\GG)$.  
The LMG $\GG$ is called a \emph{directed acyclic graph} (DAG) if it has only directed edges and no directed cycles.  
In this case, $f\in \M(\GG)$ if and only if it factorizes in a simple way:
\begin{equation}
\label{eqn:DAG}
f(x) = \prod_{i\in[p]}f(x_i \mid x_{\pa_\GG(i)}).
\end{equation}
In the case of DAGs, typically all members of an MEC only have a subset of their directed arrows in common.  
Thus, we can only learn a subset of the causal relations between nodes by sampling from the observational distribution.  
One way to improve identifiability of the causal DAG, i.e., learn more of the directed arrows, is to perform interventions on variables in the system and sample data from the resulting interventional distributions.   
Let $X = (X_1,\ldots,X_p)$ be a random vector with joint distribution $\mathbb{P}$ Markov to a DAG $\GG = ([p],E)$, and let $f^{(\emptyset)}$ denote its density function. 
Given a subset $I\subset[p]$, called an \emph{intervention target}, a distribution with density $f^{(I)}$ admitting a factorization
\begin{equation}
\label{eqn: interventional distribution}
f^{(I)}(x) = \prod_{i\in I}f^{(I)}(x_i \mid x_{\pa_\GG(i)})\prod_{i\notin I}f^{(\emptyset)}(x_i \mid x_{\pa_\GG(i)})
\end{equation}
is called an \emph{interventional distribution} (with respect to $\GG$ and $I$).  
The intervention is \emph{perfect} if $f^{(I)}(x_i \mid x_{\pa_\GG(i)}) = f^{(I)}(x_i)$ for all $i\in I$.  

Let $\I$ be a multiset of intervention targets, and $(f^{(I)})_{I\in\I}$ be a sequence of densities over $X = (X_1,\ldots,X_p)$ indexed by the elements of $\I$.  
For a DAG $\GG = ([p],E)$, define the set of \emph{intervention settings} for $\GG$ with respect to $\I$ as
\begin{equation}
\label{eqn: DAG interventional distributions}
\begin{split}
\M_\I(\GG) :=&\{(f^{(I)})_{I\in\I} \mid
\forall I,J\in\I : f^{(I)}\in\M(\GG), \mbox{ and }\\ 
&f^{(I)}(x_i\mid x_{\pa_\GG(i)}) = f^{(J)}(x_i\mid x_{\pa_\GG(i)}) \mbox{ for all } i\notin I\cup J\}.\\
\end{split}
\end{equation}
Since $\M_\I(\GG)$ represents the collection of all sequences of densities that can be generated from the interventions $\I$, \citet{YKU18} used this to formally define Markov equivalence of DAGs under general interventions:
\begin{definition}\cite[Definition 3.4]{YKU18}
\label{def: I-Markov equivalence of DAGs}
For the intervention targets $\I$, two DAGs $\GG$ and $\HH$ are \emph{$\I$-Markov equivalent} and belong to the same \emph{$\I$-Markov equivalence class} ($\I$-MEC) if $\M_\I(\GG) = \M_\I(\HH)$.
\end{definition} 

Notice that this definition, which relies on invariance properties of the factorization~\eqref{eqn:DAG}, is fundamentally different than the definition of Markov equivalence of DAGs given by Definition~\ref{def: markov equivalence}, which is phrased in terms of global Markov properties.  
However, the result of \cite[Theorem 3.9]{YKU18} demonstrates that $\I$-Markov equivalence of DAGs can also be defined using the global Markov property of DAGs.  
This suggests a generalization to mixed graph models, for which it is easier to work with global Markov properties since the corresponding factorization criteria generalizing \eqref{eqn:DAG} are not as simple (or simply unknown).  

\subsection{Interventional graphs and global $\I$-Markov properties } 
\label{subsec: interventional graphs and global I-markov properties}

We now recall the result of \cite{YKU18} so as to set the stage for our corresponding generalization.  

\begin{definition}
\label{def: interventional graph}
Let $\GG = ([p],E)$ be an LMG and $\I$ a collection of intervention targets.  
The \emph{interventional graph for $\I$} ($\I$-LMG) is the graph $\GG^\I$ with nodes $[p]\cup W_\I$, where
\[
W_\I :=\{\omega_I \mid I\in\I\backslash\{\emptyset\}\},
\]
and edges $E\cup E_\I$, where
\[
E_\I := \{\omega_I\rightarrow i \mid i\in I, I\in\I\backslash\{\emptyset\}\}.
\]
\end{definition}

	\begin{figure}
	\centering

\begin{tikzpicture}[thick,scale=0.3]
	

	 \node[circle, draw, fill=black!0, inner sep=1pt, minimum width=1pt] (1) at (-24,3) {$1$};
	 \node[circle, draw, fill=black!0, inner sep=1pt, minimum width=1pt] (2) at (-22,0) {$2$};
	 \node[circle, draw, fill=black!0, inner sep=1pt, minimum width=1pt] (3) at (-21,3) {$3$};
	 \node[circle, draw, fill=black!0, inner sep=1pt, minimum width=1pt] (4) at (-18,3) {$4$};
	 \node[circle, draw, fill=black!0, inner sep=1pt, minimum width=1pt] (5) at (-15,3) {$5$};
	 \node[circle, draw, fill=black!0, inner sep=1pt, minimum width=1pt] (6) at (-15,0) {$6$};
	 \node[circle, draw, fill=black!0, inner sep=1pt, minimum width=1pt] (7) at (-18,0) {$7$};

	 \node[circle, draw, fill=black!0, inner sep=1pt, minimum width=1pt] (g1) at (-9,3) {$1$};
	 \node[circle, draw, fill=black!0, inner sep=1pt, minimum width=1pt] (g2) at (-7,0) {$2$};
	 \node[circle, draw, fill=black!0, inner sep=1pt, minimum width=1pt] (g3) at (-6,3) {$3$};
	 \node[circle, draw, fill=black!0, inner sep=1pt, minimum width=1pt] (g4) at (-3,3) {$4$};
	 \node[circle, draw, fill=black!0, inner sep=1pt, minimum width=1pt] (g5) at (0,3) {$5$};
	 \node[circle, draw, fill=black!0, inner sep=1pt, minimum width=1pt] (g6) at (0,0) {$6$};
	 \node[circle, draw, fill=black!0, inner sep=1pt, minimum width=1pt] (g7) at (-3,0) {$7$};

	 \node[circle, draw, fill=black!100, inner sep=1pt, minimum width=1pt] (gw1) at (-10,5) {};
	 \node[circle, draw, fill=black!100, inner sep=1pt, minimum width=1pt] (gw14) at (-6,6) {};
	 \node[circle, draw, fill=black!100, inner sep=1pt, minimum width=1pt] (gw4) at (-1,5) {};
	 
	 \draw        (1) -- (2) ;
 	 \draw[->]   (2) -- (3) ;
 	 \draw[->]   (4) -- (3) ;
 	 \draw[->]   (4) -- (5) ;
 	 \draw[->]   (5) -- (6) ;
 	 \draw[->]   (6) -- (7) ;
 	 \draw[->]   (7) -- (4) ;

	 \draw        (g1) -- (g2) ;
 	 \draw[->]   (g2) -- (g3) ;
 	 \draw[->]   (g4) -- (g3) ;
 	 \draw[->]   (g4) -- (g5) ;
 	 \draw[->]   (g5) -- (g6) ;
 	 \draw[->]   (g6) -- (g7) ;
 	 \draw[->]   (g7) -- (g4) ;
 	 \draw[->]   (gw1) -- (g1) ;
 	 \draw[->]   (gw14) -- (g1) ;
 	 \draw[->]   (gw14) -- (g4) ;
	 \draw[->]   (gw4) -- (g4) ;
	 
 	 \node  at (-9,5.5) {$\omega_{\{1\}}$};
 	 \node  at (-4,6) {$\omega_{\{1,4\}}$};
 	 \node  at (0,5.5) {$\omega_{\{4\}}$};
 	 \node  at (-13.5,-1) {$\GG$};
 	 \node  at (1.5,-1) {$\GG^\I$};
\end{tikzpicture}
	\vspace{-0.2cm}
	\caption{An LMG $\GG$ with $\I$-LMG $\GG^\I$ for $\I = \{\{1\},\{4\},\{1,4\}\}$.  Note that $\GG$ is ribbonless graph (see the definition in subsection~\ref{subsec: ancestral and ribbonless graphs}), but $\GG^\I$ is not.}
	\label{fig: intervention example}
	\end{figure}
Figure~\ref{fig: intervention example} shows an LMG and its $\I$-LMG.\footnote{As in \cite{YKU18}, we treat interventions as parameters instead of random variables so as to avoid problems with faithfulness violations as discussed in \cite{MCM16}.  To make this distinction in our depictions of $\I$-LMGs, we draw nodes corresponding to random variables as open circles containing labels, and we draw interventional nodes in $W_\I$ as filled, black circles.}
\citet{VP91} proved two DAGs are Markov equivalent if and only if they have the same adjacencies and the same v-structures.  
The characterization of $\I$-MECs given by \citet{YKU18} extends this result via the combinatorics of $\I$-DAGs.  
When considered in the context of \cite{VP91}, their characterization frames $\I$-Markov equivalence for DAGs in terms of the global Markov property for DAGs.
In the following, if $\langle i,j,k\rangle$ is a v-structure in an $\I$-LMG $\GG^\I$ such that $i$ or $k$ is in $W_\I$, we call it an \emph{$\I$-v-structure}.

\begin{theorem}
\cite[Theorem 3.9]{YKU18}
\label{thm: YKU characterization}
Let $\I$ be a collection of interventional targets for which $\emptyset\in\I$.  
Two DAGs $\GG$ and $\HH$ are $\I$-Markov equivalent if and only if $\GG$ and $\HH$  are Markov equivalent and $\GG^\I$ and $\HH^\I$ have the same $\I$-v-structures.  
\end{theorem}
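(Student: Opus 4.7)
The plan is to establish that $\M_\I(\GG) = \M_\I(\HH)$ if and only if the extended graphs $\GG^\I$ and $\HH^\I$ are Markov equivalent as ordinary DAGs on $[p] \cup W_\I$, and then read off the stated graphical criterion via the classical Verma--Pearl theorem. The right-hand side of the theorem is exactly the combinatorial decomposition of ``same skeleton and same v-structures'' for the DAGs $\GG^\I$ and $\HH^\I$: the $W_\I$-edges are fixed by the common $\I$, so the skeletons of $\GG^\I$ and $\HH^\I$ agree iff $\GG$ and $\HH$ have the same adjacencies; v-structures of $\GG^\I$ split into those contained entirely in $[p]$, which are the v-structures of $\GG$, and those involving at least one node in $W_\I$, which are the $\I$-v-structures. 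So once the distributional equivalence with $\M(\GG^\I)$ is set up, the theorem is immediate.

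To relate $\M_\I(\GG)$ to $\M(\GG^\I)$, I would build a bijection sending each intervention setting $(f^{(I)})_{I\in\I}$ to a joint density $\tilde f$ on $(X, T)$, where $T$ is an auxiliary context variable taking values in $\I$, via $\tilde f(x, T = I) := \pi(I) f^{(I)}(x)$ for some strictly positive distribution $\pi$ on $\I$. Encoding $T$ through indicator coordinates $\{\omega_I\}_{I \in \I \setminus \{\emptyset\}}$ places $\tilde f$ on the vertex set of $\GG^\I$. The first clause of (\ref{eqn: DAG interventional distributions}), that each $f^{(I)} \in \M(\GG)$, corresponds to the conditional factorization of $\tilde f(x \mid T)$ over $\GG$. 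The cross-target invariance in the second clause, $f^{(I)}(x_i\mid x_{\pa_\GG(i)}) = f^{(J)}(x_i\mid x_{\pa_\GG(i)})$ for $i \notin I \cup J$, is equivalent to the conditional independencies $X_i \independent \omega_I \mid X_{\pa_\GG(i)}$ for every $i \notin I$, which are precisely the $m$-separations in $\GG^\I$ witnessing the absence of the edge $\omega_I \to i$. Together the two clauses are equivalent to $\tilde f$ being Markov to $\GG^\I$ as a DAG, so $\M_\I(\GG)$ and the $\pi$-slice of $\M(\GG^\I)$ are in bijection, and likewise for $\HH$.

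Chaining these observations: $\M_\I(\GG) = \M_\I(\HH)$ iff $\M(\GG^\I) = \M(\HH^\I)$ iff $\GG^\I \approx \HH^\I$ iff, by the combinatorial analysis above, $\GG \approx \HH$ and $\GG^\I$ and $\HH^\I$ have the same $\I$-v-structures. The hypothesis $\emptyset \in \I$ enters in two places: it ensures the observational density is present in every intervention setting, so that the bijection above canonically picks up the baseline distribution, and it fixes the asymmetry between the baseline context and the active ones when encoding $T$ through $W_\I$.

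The main obstacle I anticipate is making the bijection between $\M_\I(\GG)$ and a slice of $\M(\GG^\I)$ rigorous. One must choose the context measure $\pi$ so that all relevant conditionals are well-defined, handle the fact that the baseline context has no associated $\omega$-node while every active $I \neq \emptyset$ does, and verify that the DAG factorization for $\GG^\I$ translates to exactly the two clauses of (\ref{eqn: DAG interventional distributions}) and imposes no additional constraints (in particular that every joint density Markov to $\GG^\I$ arises from a legitimate intervention setting under this encoding). Once this correspondence is set up carefully, the rest of the argument reduces to Verma--Pearl for the $\I$-DAGs, yielding the stated characterization.
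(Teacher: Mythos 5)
This statement is quoted from \cite{YKU18} and the paper does not reprove it, so the comparison is against the original argument, which proceeds in two steps: first, \cite[Proposition 3.8]{YKU18} shows $(f^{(I)})_{I\in\I}\in\M_\I(\GG)$ iff a designated \emph{subset} of $d$-separation statements in $\GG^\I$ (the $\I$-Markov property, in which one always conditions on $W_\I\setminus\{\omega_I\}$) holds; second, a soundness/completeness argument (\cite[Lemma 3.10]{YKU18}) constructs, for any graphical discrepancy, an explicit interventional setting lying in one model class but not the other. Your combinatorial reduction --- same skeletons and v-structures of $\GG^\I,\HH^\I$ decompose into ``$\GG\approx\HH$'' plus ``same $\I$-v-structures'' --- is correct and is exactly the reformulation the paper records after the theorem statement. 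The gap is in your central distributional step.

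Concretely, the claim that the two clauses of \eqref{eqn: DAG interventional distributions} are equivalent to the joint density $\tilde f(x,T=I)=\pi(I)f^{(I)}(x)$ being Markov to the DAG $\GG^\I$ fails in both directions. Encoding the single context variable $T$ through the indicator coordinates $\{\omega_I\}$ makes those coordinates deterministically dependent (exactly one is active), whereas $\GG^\I$ asserts that the $\omega_I$ are mutually non-adjacent source nodes and hence marginally independent; so $\tilde f$ is generically \emph{not} in $\M(\GG^\I)$ even when $(f^{(I)})_{I\in\I}\in\M_\I(\GG)$. This is precisely the faithfulness pathology of \cite{MCM16} that the paper avoids by treating the $\omega_I$ as parameters rather than random variables. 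Relatedly, your local claim that the invariance $f^{(I)}(x_i\mid x_{\pa_\GG(i)})=f^{(J)}(x_i\mid x_{\pa_\GG(i)})$ for $i\notin I\cup J$ is equivalent to $X_i\independent \omega_I\mid X_{\pa_\GG(i)}$ is wrong: conditioning on $\omega_I=0$ alone averages over all other contexts $J$, including those with $i\in J$ whose mechanisms are unconstrained, so the mixture need not match $f^{(I)}(x_i\mid x_{\pa_\GG(i)})$; one must condition on all of $W_\I\setminus\{\omega_I\}$ to pin down the context, which is exactly how property (2) of the $\I$-Markov property is phrased. Finally, even after repairing this to an equivalence with the $\I$-Markov property (a strict sub-collection of $\J(\GG^\I)$), the chain ``$\M_\I(\GG)=\M_\I(\HH)$ iff $\GG^\I\approx\HH^\I$'' does not follow formally: one still needs the completeness construction of a distinguishing interventional setting witnessing each violated separation statement (the analogue of the construction invoked via \cite[Lemma 4]{KJSB19} in the proof of Theorem~\ref{thm: coincide}), which your proposal does not supply.
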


It is worth noting that \citet{MCM16} also consider $\I$-LMGs in their \emph{Joint Causal Inference} (JCI) framework, in which they use the $\I$-LMG to learn causal structure by applying classic causal discovery algorithms to the $\I$-LMG.  
On the other hand, they do not provide explicit graphical characterizations of $\I$-MECs as in Theorem~\ref{thm: YKU characterization}.  

By the result of \citet{VP91}, Theorem~\ref{thm: YKU characterization} equivalently states that two DAGs $\GG$ and $\HH$ are $\I$-Markov equivalent if and only if $\GG^\I$ and $\HH^\I$ are Markov equivalent.  
The former interpretation (stated in Theorem~\ref{thm: YKU characterization}) captures the intuition that intervening on nodes of a graph should ``lock'' more arrows in place; that is, determine more causal relations.  
However, the latter interpretation suggests a natural generalization of $\I$-Markov equivalence to formal independence models for LMGs, free of a factorization criterion, and based solely on global Markov properties:

\begin{definition}
\label{def: LMG I-Markov equivalence}
Let $\I$ be a collection of intervention targets.
Two LMGs $\GG$ and $\HH$ are \emph{$\I$-Markov equivalent} and belong to the same \emph{$\I$-Markov equivalence class} ($\I$-MEC) if
\[
\J(\GG^\I) = \J(\HH^\I).
\]
\end{definition}

\begin{remark}
\label{rmk: graphical invariance}
It may seem counterintuitive to define a notion of interventional equivalence based solely on the graph structure and without regard to the invariance properties of the underlying distribution.  
However, Theorem~\ref{thm: YKU characterization} in fact gives a completely graph-theoretical way of viewing these invariances.  
Recall that the $\I$-DAG $\GG^\I$ adds a new source node $\omega_I$ to $\GG$ such that $\omega_I\rightarrow i$ for all $i\in I$.  
Theorem~\ref{thm: YKU characterization} states that the intervention represented by the addition of this node will not change the causal mechanisms of nodes from which the point of intervention can be $d$-separated.  
For example, the causal mechanism $f(x_j\mid x_{\pa_\GG(j)})$ for any ancestor $j\notin I$ of a node $i\in I$ will remain unchanged by the intervention $I$, and this is captured graphically by the fact that $\omega_I$ is $d$-separated from $j$ in $\GG^\I$ given only $W_\I\setminus w_I$.  
In Section~\ref{sec: factorization criteria}, our choice of abstraction to this level will be further justified when we show that any two directed AGs are $\I$-Markov equivalent in terms of Definition~\ref{def: LMG I-Markov equivalence} if and only if they encode the same interventional settings from a standard causal perspective.  
\end{remark}

In the case of DAGs, notice that Theorem~\ref{thm: YKU characterization} would be trivial if we take Definition~\ref{def: LMG I-Markov equivalence} as the definition of $\I$-Markov equivalence instead of Definition~\ref{def: I-Markov equivalence of DAGs}.  
This is because an $\I$-DAG is again a DAG.  
However, as seen in Figure~\ref{fig: intervention example}, an $\I$-LMG $\GG^\I$ need not always be in the same subclass of LMGs as $\GG$.
This makes generalizing Theorem~\ref{thm: YKU characterization} with respect to Definition~\ref{def: LMG I-Markov equivalence} nontrivial for classes of LMGs other than DAGs.  
In particular, we would like to generalize Theorem~\ref{thm: YKU characterization} to one such family of LMGs, called ancestral graphs (AGs).  
Doing so will allow for modeling interventions in the same fashion as DAGs, but now in the presence of latent confounders and selection variables.  
A complete generalization of Theorem~\ref{thm: YKU characterization} to all AGs with arbitrary intervention targets is impossible (as we will see in Example~\ref{ex: doubly intervened}).  
However, it will extend to all AGs under reasonable assumptions on the intervention targets.  
Notice that the generalization given in Definition~\ref{def: LMG I-Markov equivalence} is for formal independence models, and hence any corresponding generalization of Theorem~\ref{thm: YKU characterization}, such as the one we will prove in Theorem~\ref{thm: main}, will be a purely mathematical result.  
To give such a result probabilistic meaning, it is also necessary to generalize the collection of intervention settings associated to a DAG to more general LMGs and relate such generalizations to Definition~\ref{def: LMG I-Markov equivalence}.  
This is done in Section~\ref{sec: factorization criteria} (specifically, Theorem~\ref{thm: coincide}), so as to completely generalize Theorem~\ref{thm: YKU characterization} to a more general family of LMGs; namely, the directed AGs. 

\subsection{Some subfamilies of LMGs} 
\label{subsec: ancestral and ribbonless graphs}
We now introduce families of LMGs, and some of their properties, that will be necessary to generalize Theorem~\ref{thm: YKU characterization}. 
A \emph{ribbon} in $\GG$ is a collider path $\langle i,j,k \rangle$ such that in $\GG$:
	\begin{enumerate}[1.]
		\item There is no endpoint-identical edge between $i$ and $k$; i.e., there is no $i\leftrightarrow k$ edge in the case $i\leftrightarrow j\leftrightarrow k$, no $i\rightarrow k$ edge in the case $i\rightarrow j \leftrightarrow k$, and no $i - k$ edge in the case $i\rightarrow j \leftarrow k$, and
		\item $j$ or a descendant of $j$ is an endpoint of an undirected edge or is on a directed cycle.
	\end{enumerate}
A \emph{ribbonless graph} (RG) is an LMG containing no ribbons.  
See Figure~\ref{fig: intervention example} and \cite{S12} for examples.
RGs are \emph{probabilistic}; i.e., $\J(\GG)$ is a probabilistic independence model.
In fact, each such model can be induced by marginalizing and conditioning a DAG model \cite{S12}.  
A subclass of RGs admitting this property \cite{RS02} are the ancestral graphs.
An LMG $\GG$ is an \emph{ancestral graph} (AG) if:
	\begin{enumerate}[1.]
		\item $\GG$ contains no directed cycles,
		\item whenever $\GG$ contains a bidirected edge $i\leftrightarrow j$, there is no directed path from $i$ to $j$ or $j$ to $i$ in $\GG$, and
		\item whenever $\GG$ contains an undirected edge $i-j$, neither $i$ nor $j$ has an arrowhead pointing towards it.
	\end{enumerate}

AGs were introduced by \cite{RS02} as a generalization of DAG models that are closed under marginalization and conditioning.  
They offer a means by which to model causation in the presence of latent confounders and selection variables.  
The bidirected edge $i\leftrightarrow j$ can be interpreted as representing a triple $i\leftarrow k \rightarrow j$ where the node $k$ corresponds to an unobserved random variable.  
Similarly, the undirected edge $i - j$ can be thought of as a triple $i\rightarrow k \leftarrow j$ in which the variable corresponding to $k$ has been conditioned upon (a selection variable).  
We call the endpoints of an undirected edge $x - y$ \emph{selection adjacent nodes}, and we let $\sa(\GG)$ denote the collection of selection adjacent nodes in $\GG$.

If an AG contains no undirected edges we call it a \emph{directed AG}.  
Directed AGs are precisely those AGs that are also \emph{acyclic directed mixed graphs} (ADMGs); i.e., graphs containing only directed or bidirected edges and no directed cycles.  
Hence, the results we derive in Section~\ref{sec: factorization criteria} on the invariance properties of factorization criterion for ADMGs will apply to all directed AGs.  
The two key differences between AGs and RGs are that RGs allow directed cycles and arrowheads pointing into selection adjacent nodes.  
In Figure~\ref{fig: hierarchy}, we depict the hierarchy of the subfamilies of LMGs studied in this paper.  
An extensive treatment of the many well-studied subfamilies of LMGs can be found in \cite{SL14}.  
	\begin{figure}
	\centering

\begin{tikzpicture}[thick,scale=0.3]
	
	 \node (h1) at (0,0) {LMGs};
	 \node (h1) at (2.5,0) {$\subsetneq$};
	 \node (h2) at (5,0) {RGs};
	 \node (h1) at (7.5,0) {$\subsetneq$};
	 \node (h3) at (10,0) {AGs};
	 \node (h1) at (12.5,0) {$\subsetneq$};
	 \node  (h4) at (17,0) {directed AGs};
	 \node  (h5) at (10,4) {ADMGs};
	 \node (h1) at (15,4) {$\subsetneq$};
	 \node   (h6) at (19,4) {DAGs};

	 \node[rotate=45] (h1) at (7,2) {$\subsetneq$};
	 \node[rotate=45] (h1) at (18,2) {$\subsetneq$};

\end{tikzpicture}
	\vspace{-0.2cm}
	\caption{The hierarchy (by inclusion) of the subfamilies of LMGs considered in this paper.}
	\label{fig: hierarchy}
	\end{figure}
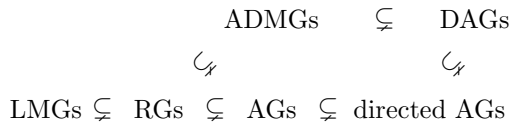

An LMG $\GG$ is called \emph{maximal} if adding any edge to $\GG$ changes the collection $\J(\GG)$.  
\citet[Theorem 5.1]{RS02} showed that every AG $\GG$ is contained in a unique \emph{maximal ancestral graph} (MAG) $\overline{\GG}$, and that $\overline{\GG}$ is produced from $\GG$ by adding bidirected arrows between the endpoints of the \emph{inducing paths} in $\GG$ (see the appendix for a definition).  
A similar result holds for RGs \cite{S12}.  
The existence of a unique maximal extension of a given RG will play a key role in the coming section.  

\section{$\I$-Markov Equivalence for Ancestral Graphs} 
\label{sec: I-Markov equivalence for MAGs}

With the definitions and observations of the previous section, we can now say exactly how Theorem~\ref{thm: YKU characterization} generalizes to AGs.
To understand the context in which the desired generalization holds, consider the following example:  
\begin{example}
\label{ex: doubly intervened}
Let $\GG$ and $\HH$ be the Markov equivalent AGs depicted in Figure~\ref{fig: doubly intervened}.  
As seen here, by taking $\I = \{\{1,4\},\{1,5\}\}$, the LMGs $\GG^\I$ and $\HH^\I$ have the same $\I$-v-structures.  
However, $\GG$ and $\HH$ are not $\I$-Markov equivalent.  
For example, $\langle\omega_{\{1,5\}}, \omega_{\{1,4\}}\mid 2\rangle\in\J(\GG^\I)$, but $\langle\omega_{\{1,5\}}, \omega_{\{1,4\}}\mid 2\rangle\notin\J(\HH^\I)$.
The induced subgraph of $\GG^\I$ on the node set $\{\omega_{\{1,4\}},\omega_{\{1,5\}},1,2\}$ is a special type of ribbon in $\GG^\I$ called a \emph{straight ribbon} \cite{SL14}.  
The underlying problem appears to be the ``double intervention'' on selection adjacent nodes, which results in a straight ribbon in the interventional graph.  
As we will see, assuming that we do not doubly-intervene on selection adjacent nodes is equivalent to assuming that $\GG^\I$ is ribbonless.  
This will be a sufficient condition for generalizing Theorem~\ref{thm: YKU characterization} to AGs.
\end{example}
	\begin{figure}[b!]
	\centering

\begin{tikzpicture}[thick,scale=0.3]
	
	 \node[circle, draw, fill=black!0, inner sep=1pt, minimum width=1pt] (h1) at (0,6) {$1$};
	 \node[circle, draw, fill=black!0, inner sep=1pt, minimum width=1pt] (h2) at (0,3) {$2$};
	 \node[circle, draw, fill=black!0, inner sep=1pt, minimum width=1pt] (h3) at (0,0) {$3$};
	 \node[circle, draw, fill=black!0, inner sep=1pt, minimum width=1pt] (h4) at (-2,2) {$4$};
	 \node[circle, draw, fill=black!0, inner sep=1pt, minimum width=1pt] (h5) at (2,2) {$5$};

	 \node[circle, draw, fill=black!100, inner sep=1pt, minimum width=1pt] (hw14) at (-3,7) {};
	 \node[circle, draw, fill=black!100, inner sep=1pt, minimum width=1pt] (hw15) at (3,7) {};

	 \node[circle, draw, fill=black!0, inner sep=1pt, minimum width=1pt] (g1) at (-14,6) {$1$};
	 \node[circle, draw, fill=black!0, inner sep=1pt, minimum width=1pt] (g2) at (-14,3) {$2$};
	 \node[circle, draw, fill=black!0, inner sep=1pt, minimum width=1pt] (g3) at (-14,0) {$3$};
	 \node[circle, draw, fill=black!0, inner sep=1pt, minimum width=1pt] (g4) at (-16,2) {$4$};
	 \node[circle, draw, fill=black!0, inner sep=1pt, minimum width=1pt] (g5) at (-12,2) {$5$};

	 \node[circle, draw, fill=black!100, inner sep=1pt, minimum width=1pt] (gw14) at (-17,7) {};
	 \node[circle, draw, fill=black!100, inner sep=1pt, minimum width=1pt] (gw15) at (-11,7) {};
 
	 \draw[->]   (h1) -- (h2) ;
 	 \draw[->]   (h2) -- (h3) ;
 	 \draw[->]   (h3) -- (h4) ;
 	 \draw[->]   (h3) -- (h5) ;
 	 \draw[->]   (hw14) -- (h1) ;
 	 \draw[->]   (hw14) -- (h4) ;
	 \draw[->]   (hw15) -- (h1) ;
 	 \draw[->]   (hw15) -- (h5) ;

	 \draw        (g1) -- (g2) ;
 	 \draw[->]   (g2) -- (g3) ;
 	 \draw[->]   (g3) -- (g4) ;
 	 \draw[->]   (g3) -- (g5) ;
 	 \draw[->]   (gw14) -- (g1) ;
 	 \draw[->]   (gw14) -- (g4) ;
	 \draw[->]   (gw15) -- (g1) ;
 	 \draw[->]   (gw15) -- (g5) ;
	 
 	 \node  at (-4.5,6.5) {$\omega_{\{1,4\}}$};
 	 \node  at (4.75,6.5) {$\omega_{\{1,5\}}$};
 	 \node  at (-18.5,6.5) {$\omega_{\{1,4\}}$};
 	 \node  at (-9.25,6.5) {$\omega_{\{1,5\}}$};
 	 \node  at (-12,-0.5) {$\GG^\I$};
 	 \node  at (2,-0.5) {$\HH^\I$};
\end{tikzpicture}
	\vspace{-0.2cm}
	\caption{Two $\I$-MAGs $\GG^\I$ and $\HH^\I$ such that $\GG$ and $\HH$ are Markov equivalent and $\GG^\I$ and $\HH^\I$ have the same $\I$-v-structures, but $\GG$ and $\HH$ are not $\I$-Markov equivalent.}
	\label{fig: doubly intervened}
	\end{figure}

Even with the assumption mentioned in Example~\ref{ex: doubly intervened}, we still capture useful causal models.  
For example, it does not rule out cases in which we can target single variables, such as when we generate interventional distributions for studying gene regulatory networks via targeted gene deletions using the CRISPR/CAS-9 system \cite{Dixit16}.  
It is also a trivial assumption if there are no selection variables; i.e., in the case of directed AGs.  

We further note that intervention settings that doubly-intervene on nodes arise naturally in practical contexts.  
For instance, the intervention setting considered in \cite{Sachs05} considers a multiset of intervention targets when learning a specific protein signaling network.  
Here, researchers are interested in modeling an interventional setting that includes different interventional experiments for the regulation of targeted molecules.  
The result is a model containing doubly-intervened nodes, as depicted in Figure~\ref{fig: Sachs} in the appendix.  
Similarly, doubly-intervened nodes such as depicted in Figure~\ref{fig: doubly intervened} can arise when the reagents for regulating expression target more than one molecule at a time.  
\begin{definition}
\label{def: doubly-intervened}
Let $\GG=([p],E)$ be an LMG and $\I$ a collection of intervention targets.  
We say that $\I$ \emph{doubly-intervenes} on a node $i\in[p]$ if there exist two distinct elements $I$ and $J$ in the multiset $\I$ such that $i\in I\cap J$.
\end{definition}

Recall that AGs are a subclass of RGs.
In the appendix, we show that, when intervening on an AG $\GG$, the target collection $\I$ does not doubly-intervene on any selection adjacent nodes in $\GG$ if and only if $\GG^\I$ is ribbonless.  
This is because the only ribbons one can create in an AG are straight ribbons. 
However, $\GG^\I$ may not be ribbonless if $\GG$ is an RG but not an AG.  
For example, if for the RG $\GG$ from Figure~\ref{fig: intervention example}, we intervene at $\I = \{\{4\},\{1,4\}\}$, then $\GG^\I$ is not ribbonless even though $\I$ does not doubly-intervene on any selection-adjacent nodes in $\GG$.
This is due to the \emph{cyclic ribbon} $\langle \omega_{\{1,4\}},4,\omega_{\{1\}}\rangle$.  

\citet{ARS09} showed that two MAGs are Markov equivalent if and only if they have the same \emph{colliders with order}.  
A collider $\langle i,j,k\rangle$ has \emph{order $0$} if $i$ and $k$ are not adjacent.  
For $t\geq0$, if we let $\mathfrak{D}_t$ denote the set of all \emph{triples of order $t$} in a given LMG $\GG$, then $\langle i,j,k\rangle\in\mathfrak{D}_{t+1}$ if it is not a triple with order $s<t+1$, and there is a path $\langle v_0,v_1,\ldots,v_m,b,c\rangle$ with either $\langle i,j,k\rangle = \langle v_m,b,c\rangle$ or $\langle k,j,i\rangle = \langle v_m,b,c\rangle$ such that 
\[
\langle v_0,v_1,v_2\rangle, \langle v_1,v_2,v_3\rangle, \ldots, \langle v_{m-1},v_m,b\rangle\in\bigcup_{s\leq t}\mathfrak{D}_s,
\]
$v_0$ and $c$ are nonadjacent in $\GG$, and $v_0,\ldots,v_m$ are parents of $c$.  
The path $\pi$ is a \emph{discriminating path} for $j$. 
For example, the path $\langle \omega_{\{1\}}, 1, 2, 3\rangle$ is a discriminating path for $2$ in the right-hand graph in Figure~\ref{fig: collider with order}. 

Just as how $\I$-MECs refine an MEC of DAGs by the presence of new v-structures (i.e., the $\I$-v-structures), we will see that $\I$-MECs of MAGs are defined by the presence new colliders with order.  
Given an LMG $\GG$, a \emph{$\I$-collider with order in $\GG^\I$} is a collider with order in $\GG^\I$ that is not a collider with order in $\GG$.  
An example is given in Figure~\ref{fig: collider with order}.
We can now state our main theorem in this section:
	\begin{figure}
	\centering

\begin{tikzpicture}[thick,scale=0.3]
	
 	 \node[circle, draw, fill=black!0, inner sep=1pt, minimum width=1pt] (t1) at (0,-8) {$1$};
	 \node[circle, draw, fill=black!0, inner sep=1pt, minimum width=1pt] (t2) at (3,-8) {$2$};
	 \node[circle, draw, fill=black!0, inner sep=1pt, minimum width=1pt] (t3) at (5,-5) {$3$};

	 \node[circle, draw, fill=black!0, inner sep=1pt, minimum width=1pt] (r1) at (13,-8) {$1$};
	 \node[circle, draw, fill=black!0, inner sep=1pt, minimum width=1pt] (r2) at (16,-8) {$2$};
	 \node[circle, draw, fill=black!0, inner sep=1pt, minimum width=1pt] (r3) at (18,-5) {$3$};

 	 \node[circle, draw, fill=black!100, inner sep=1pt, minimum width=1pt] (rwIJ) at (10,-7.8) {};

	 \draw[<->]   (t1) -- (t2) ;
 	 \draw[<->]   (t2) -- (t3) ;
 	 \draw[->]   (t1) -- (t3) ;

	 \draw[<->]   (r1) -- (r2) ;
 	 \draw[<->]   (r2) -- (r3) ;
 	 \draw[->]   (r1) -- (r3) ;
 	 \draw[->]   (rwIJ) -- (r1) ;
	 
 	 \node  at (9,-8.5) {$\omega_{\{1\}}$};
 	 \node  at (1,-5) {$\GG$};
 	 \node  at (14,-5) {$\GG^\I$};
 	
\end{tikzpicture}
	\vspace{-0.2cm}
	\caption{By intervening at $\I = \{\{1\}\}$, we produce the $\I$-collider $\langle 1,2,3\rangle$ with order $1$ and the $\I$-v-structure $\langle \omega_{\{1\}}, 1,2\rangle$ (i.e., an $\I$-collider with order $0$). Both are $\I$-colliders with order since they appear in $\GG^\I$ but not $\GG$.}
	\label{fig: collider with order}
	\end{figure}
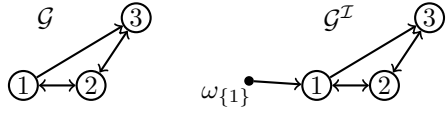

\begin{theorem}
\label{thm: main}
Let $\GG$ and $\HH$ be AGs, and let $\I$ be a collection of intervention targets such that no selection adjacent node is doubly-intervened in either $\GG$ or $\HH$.  
The following are equivalent:
\begin{enumerate}[1.]
	\item $\GG$ and $\HH$ are $\I$-Markov equivalent.
	\item $\overline{\GG}$ and $\overline{\HH}$ are $\I$-Markov equivalent.
	\item $\overline{\GG}$ and $\overline{\HH}$ are Markov equivalent and $\overline{\GG}^\I$ and $\overline{\HH}^\I$ have the same $\I$-colliders with order.
\end{enumerate}
\end{theorem}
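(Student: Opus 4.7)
The plan is to prove the chain $(1) \Leftrightarrow (2) \Leftrightarrow (3)$. For the first link I would establish the stronger claim that $\J(\GG^\I) = \J(\overline{\GG}^\I)$ for any AG $\GG$ and targets $\I$ satisfying the hypothesis. Under the doubly-intervention assumption, the appendix result guarantees that $\GG^\I$ is a ribbonless graph, hence has a well-defined maximal extension preserving its independence model. The graph $\overline{\GG}^\I$ is obtained from $\GG^\I$ by adding precisely those bidirected edges $i \leftrightarrow j$ that extend $\GG$ to its MAG, each corresponding to an inducing path between observed nodes in $\GG$. The crucial observation is that each intervention node $\omega_I$ is a source: it has no incoming arrows. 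Hence $\omega_I$ can never be a nonendpoint collider of any path, so every inducing path between two observed nodes in $\GG^\I$ is already an inducing path in $\GG$. Thus the edges added in forming $\overline{\GG}^\I$ from $\GG^\I$ are all subsumed by existing inducing paths, giving $\J(\overline{\GG}^\I) = \J(\GG^\I)$; applying the same observation to $\HH$ yields $(1) \Leftrightarrow (2)$.

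For $(2) \Leftrightarrow (3)$, I would apply the ancestral-graph (or, more generally, ribbonless-graph) generalization of the characterization of \citet{ARS09} to the ribbonless graphs $\overline{\GG}^\I$ and $\overline{\HH}^\I$, possibly via their maximal extensions: they are Markov equivalent if and only if they share the same adjacencies and the same colliders with order. The adjacency structure of $\overline{\GG}^\I$ among observed nodes is exactly that of $\overline{\GG}$, while the edges involving intervention nodes are determined by $\I$ alone; so matching adjacencies reduces to $\overline{\GG}$ and $\overline{\HH}$ having the same adjacencies. Likewise, every collider with order in $\overline{\GG}^\I$ is either one already present in $\overline{\GG}$ or an $\I$-collider with order; the former set matching is equivalent to $\overline{\GG}$ and $\overline{\HH}$ being Markov equivalent by \citet{ARS09}, while the latter matching is precisely the second half of condition $(3)$.

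The principal difficulty is the careful bookkeeping needed to validate the interaction between interventions and maximalization. In particular one must verify (i) that no inducing path in $\GG^\I$ uses an intervention node as an interior collider, so that maximalization of $\GG$ and the $\I$-extension really do commute at the level of independence models, and (ii) that colliders with order in $\overline{\GG}^\I$ decompose cleanly into those inherited from $\overline{\GG}$ and the genuinely new $\I$-colliders with order, so that the comparison of discriminating paths does not cross-talk between the two groups. A further subtlety --- which forces us to work in the broader category of ribbonless graphs rather than ancestral graphs --- is that $\overline{\GG}^\I$ need not itself be ancestral, since an intervention target may be selection-adjacent, in which case the intervention introduces an arrowhead into an endpoint of an undirected edge. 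This means the collider-with-order characterization must be invoked in its ribbonless form rather than directly from \citet{ARS09}.
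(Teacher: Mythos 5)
Your high-level decomposition $(1)\Leftrightarrow(2)\Leftrightarrow(3)$ matches the paper's, and you correctly identify the two danger points, but both links have genuine gaps as written. For $(1)\Leftrightarrow(2)$ you argue that every inducing path \emph{between two observed nodes} of $\GG^\I$ is already an inducing path of $\GG$, because $\omega_I$ cannot be an interior collider. That covers only one of the three cases needed to identify $\overline{\GG}^\I$ with the maximal extension $\overline{\GG^\I}$: you must also rule out inducing paths having an intervention node as an \emph{endpoint}, i.e.\ show that if $\omega_I$ and $j\in[p]$ are nonadjacent in $\GG^\I$ they remain nonadjacent in $\overline{\GG^\I}$ (and likewise for two intervention nodes). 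The paper devotes a separate lemma to this, exhibiting the explicit separating set $(\ant(j)\cup W_\I)\setminus\{\omega_I,j\}$ and using ancestrality of $\GG$ to kill any putative $m$-connecting path. Without this, $\overline{\GG^\I}$ could a priori contain an edge $\omega_I\to j$ absent from $\overline{\GG}^\I$. One could rescue the bare equality $\J(\GG^\I)=\J(\overline{\GG}^\I)$ by a sandwich $\GG^\I\subseteq\overline{\GG}^\I\subseteq\overline{\GG^\I}$, but the full identity $\overline{\GG}^\I=\overline{\GG^\I}$ (hence maximality of $\GG^\I$ when $\GG$ is maximal) is needed again downstream, so the endpoint case cannot be skipped.

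The more serious gap is in $(2)\Leftrightarrow(3)$. You correctly observe that $\overline{\GG}^\I$ need not be ancestral and propose to invoke the collider-with-order characterization ``in its ribbonless form.'' No such characterization is available: \citet{ARS09} is a theorem about MAGs, and there is no off-the-shelf extension of ``Markov equivalent iff same adjacencies and colliders with order'' to non-ancestral ribbonless graphs. The paper's workaround is the actual technical core of the proof: pass to the anterior graph $(\GG^\I)^\ast$, prove it is a MAG Markov equivalent to $\GG^\I$ (this uses the no-double-intervention hypothesis and maximality of $\GG$), and then prove that $(\GG^\I)^\ast$ and $\GG^\I$ have \emph{identical discriminating paths and colliders with order}, so that applying \citet{ARS09} to the MAGs $(\GG^\I)^\ast$ and $(\HH^\I)^\ast$ yields conclusions about $\GG^\I$ and $\HH^\I$. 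That last step in turn rests on a structural lemma you do not have: every $\I$-collider with order is witnessed by a discriminating path whose initial triple is an $\I$-v-structure $\langle\omega_I,v_1,v_2\rangle$, which forces the edge $\omega_I\rightarrow v_1$ to survive the anteriorization unchanged. Your ``clean decomposition'' of colliders with order into inherited ones and $\I$-colliders with order also needs an argument in the forward direction (an $\I$-collider with order in $\GG^\I$ must be shown to be an $\I$-collider with order, not merely a collider with order, in $\HH^\I$; the paper does this by contradiction using Markov equivalence of $\GG$ and $\HH$). As it stands, your proof of $(2)\Leftrightarrow(3)$ rests on an uncited and unproven equivalence theorem.
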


Condition 3 in Theorem~\ref{thm: main} is actually a graphical characterization of $\I$-Markov equivalence.  
This is because there already exist graphical characterizations of Markov equivalence of MAGs, such as \cite[Theorem 3.2]{ARS09} or \cite[Theorem 2.1]{ZZL05}.  
Thus, even though $\overline{\GG}^\I$ and $\overline{\HH}^\I$ are not necessarily AGs, by condition 3 we can still apply these graphical characterizations and the condition on the $\I$-colliders with order in $\overline{\GG}^\I$ and $\overline{\HH}^\I$ to get graphical characterizations of $\I$-Markov equivalence for the AGs $\GG$ and $\HH$.  
For example, the two AGs $\GG$ and $\HH$ in Figure~\ref{fig: doubly intervened} are Markov equivalent with the same $\I$-colliders with order when we take $\I = \{\{1,4\}\}$ even though the left-hand graph $\GG$ is such that $\GG^\I$ is not ancestral.  
On the other hand, in the special case of Theorem~\ref{thm: main} in which no selection adjacent nodes are intervened upon in either $\GG^\I$ or $\HH^\I$, condition 3 is equivalent to saying that $\GG$ and $\HH$ are $\I$-Markov equivalent if and only if $\overline{\GG}^\I$ and $\overline{\HH}^\I$ are Markov equivalent MAGs, which can be deduced by the graphical characterizations of \cite[Theorem 3.2]{ARS09} or \cite[Theorem 2.1]{ZZL05}. 

For directed AGs, we note that Theorem~\ref{thm: main} places no restrictions on the intervention targets $\I$.  
In the coming section, we will see that directed AGs, and more generally ADMGs, admit a generalization of the interventional settings $\M_\I(\GG)$ via a known factorization criterion for such models.
This allows us to give probabilistic meaning to Theorem~\ref{thm: main}.  
Using this, we will recover a complete generalization of Theorem~\ref{thm: YKU characterization} to directed AGs.      

Finally, we note that Theorem~\ref{thm: main} can be used to describe $\I$-MECs for families of AGs in a number of contexts arising in practice:  
For instance, it applies when practitioners are able to design interventional experiments where each intervention target is a singleton.  
This case arises in genetics, where one can perform targeted gene deletions \cite{Dixit16}.  
On the other hand, Theorem~\ref{thm: main} is also applicable when practitioners are not able to place such restrictions on their intervention targets but are willing to model without the incorporation of selection variables.

\section{Factorization Criteria and $\I$-ADMGs} 
\label{sec: factorization criteria}

We have now generalized a characterization of $\I$-Markov equivalence for DAGs to AGs from the perspective of formal independence models as specified by Definition~\ref{def: LMG I-Markov equivalence}.  
However, it is typical to think of interventions as modifications to conditional factors of the density (i.e., modifications to causal mechanisms), and the formal independence model perspective obscures this. 
In Definition~\ref{def: I-Markov equivalence of DAGs}, $\I$-Markov equivalence for DAGs is defined in terms of modified densities, which is fundamentally different than the perspective taken in Definition~\ref{def: LMG I-Markov equivalence}.
The choice to use Definition~\ref{def: LMG I-Markov equivalence} can be made when there is no known factorization criterion for the graphical model that allows one to encode interventions via distributional invariances. 
However, since distributional invariance is how we statistically measure the effects of intervention, it is important that we check that Definition~\ref{def: LMG I-Markov equivalence} coincides with such distributional invariances when a factorization criterion for the model is known.
The most general family of LMGs for which we have a conditional factorization criterion is the family of ADMGs introduced in Subsection~\ref{subsec: ancestral and ribbonless graphs}, which contains all directed AGs.  
In this section, we extend the definition of the interventional settings $\M_\I(\GG)$ to ADMGs via their factorization criteria, and we characterize when two ADMGs $\GG$ and $\HH$ satisfy $\M_\I(\GG) = \M_\I(\HH)$.  
As a corollary, we will prove that for directed AGs, such an equality holds if and only if the two graphs satisfy the conditions in Theorem~\ref{thm: main} imposed by our definition of $\I$-Markov equivalence for formal independence models.

For an ADMG $\GG = ([p],E)$, we let
$
\A(\GG) :=\{ A \mid A = \an_\GG(A)\},
$
be the collection of all \emph{ancestrally closed sets} in $\GG$.  
The \emph{induced bidirected graph} $\GG_{\leftrightarrow}$ is the graph formed by removing all edges from $\GG$ that are not bidirected.  
Its path-connected components are called \emph{districts} (or \emph{$c$-components} \cite{TP02}).
We denote the collection of all districts in $\GG$ by $\dis(\GG)$, and we let $\dis_\GG(i)$ denote the district of $\GG$ containing the vertex $i\in[p]$.  
If $A\subset[p]$, we also let $\dis(A):=\dis(\GG\langle A\rangle)$ and $\dis_A(i) :=\dis_{\GG_A}(i)$.  
For $A\subset[p]$, let
\[
\barren_\GG(A) :=\{x\in A \mid \de_\GG(x)\cap A = \{x\}\}.
\]
We can then define the following functions on $A\subset[p]$:
First set $\Phi_\GG(\emptyset):=\emptyset$ and $\Psi_\GG^0(A):=A$.  Then set
\begin{equation*}
\begin{split}
\Phi_\GG(A)&:=\{H \mid H=\cap_{i\in H}\barren_\GG(\an_\GG(\dis_A(i))) \mbox{ and } H\neq\emptyset\}, \mbox{ and}\\
\Psi_\GG(A)&:=A\setminus\bigcup_{H\in\Phi_\GG(A)}H.\\
\end{split}
\end{equation*}
Applying the function $\Psi_\GG$ repeatedly produces a partition of $A$ into a collection of \emph{heads} denoted 
\[
[A]_\GG :=\bigcup_{k\geq 0}\Phi_\GG(\Psi_\GG^k(A)).
\]
Each head $H\in[A]_\GG$ is a path-connected subgraph of $\GG_{\leftrightarrow}$, and it satisfies $H = \barren_\GG(\an_\GG(H))$.  
For $H\in[A]_\GG$, we define the associated \emph{tail}
\[
\tail(H) := (\dis_{\an(H)}(H)\setminus H)\cup\pa_\GG(\dis_{\an(H)}(H)).
\]
\citet{R09} derived the following factorization criterion for ADMGs:
\begin{theorem}
\cite[Theorem 4]{R09}
\label{thm: richardson factorization}
A probability distribution $\mathbb{P}$ with density function $f$ is Markov to an ADMG $\GG$ if and only if for every $A\in\A(\GG)$, the marginal density $f(x_A)$ factors as
\begin{equation}
\label{eqn: ADMG factorization}
f(x_A) = \prod_{H\in[A]_\GG}f(x_H \mid x_{\tail(H)}).
\end{equation}
\end{theorem}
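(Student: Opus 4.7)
The plan is to prove both directions by combining the latent projection representation of ADMGs with careful induction on the head-tail decomposition. The guiding intuition is that the head-tail factorization is precisely what survives when one marginalizes a DAG factorization over unobserved variables, so the whole theorem reduces to establishing a clean correspondence between heads/tails in $\GG$ and parents in a covering DAG.

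First I would establish the forward direction. Assume $f\in\M(\GG)$. Construct a DAG $\widetilde{\GG}$ on vertex set $[p]\cup L$, where $L$ contains one latent vertex per bidirected edge of $\GG$, chosen so that $\GG$ is the latent projection of $\widetilde{\GG}$ onto $[p]$ and the $m$-separations of $\widetilde{\GG}$ restricted to $[p]$ coincide with those of $\GG$. Lift $f$ to a density $\widetilde{f}$ on $[p]\cup L$ that is Markov to $\widetilde{\GG}$ and marginalizes to $f$ on $[p]$; such a lift exists by a standard hidden-variable DAG construction. Then $\widetilde{f}$ factors by~\eqref{eqn:DAG} as $\prod_{v\in[p]\cup L}\widetilde{f}(x_v\mid x_{\pa_{\widetilde{\GG}}(v)})$. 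For each $A\in\A(\GG)$, restrict attention to $\an_{\widetilde{\GG}}(A)$, integrate out $L$, and group the resulting factors by districts of $\GG$. The key combinatorial step is to verify that this grouping reassembles into $\prod_{H\in[A]_\GG}f(x_H\mid x_{\tail(H)})$. This requires showing that each head $H\in[A]_\GG$ corresponds to a barren subset of an ancestrally closed district of $\GG_A$, and that $\tail(H)$ captures exactly those variables on which the integrated factor depends.

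For the backward direction, assume~\eqref{eqn: ADMG factorization} holds for every $A\in\A(\GG)$, and let $\langle A,B\mid C\rangle\in\J(\GG)$. Set $S:=\an_\GG(A\cup B\cup C)\in\A(\GG)$ and apply the factorization to $f(x_S)$. I would then proceed by induction on $|S|$, peeling off one head $H\in[S]_\GG$ at a time according to the recursive $\Psi_\GG$-definition of $[A]_\GG$. At each peel step, the $m$-separation hypothesis forces every head to be either entirely contained in $A\cup C$ or entirely contained in $B\cup C$, and its tail to lie in the conditioning set $C$ together with variables on the same side; this lets one partition the product cleanly into factors depending only on $x_{A\cup C}$ and factors depending only on $x_{B\cup C}$. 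Marginalizing over $x_{S\setminus(A\cup B\cup C)}$ then yields $f(x_A,x_B\mid x_C)=f(x_A\mid x_C)f(x_B\mid x_C)$, which is the required conditional independence.

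The hard part will be the combinatorial bookkeeping in the marginalization step of the forward direction: verifying that after integrating out the latents, the surviving factors reorganize themselves exactly along the recursive $\Psi_\GG$-peeling that defines $[A]_\GG$. This hinges on showing that the districts and barren sets of $\GG$ interact cleanly with any topological ordering of $[p]\cup L$ inherited from $\widetilde{\GG}$, and on controlling how conditional factors collapse when grouped over each bidirected-connected component. On the backward side, the analogous delicate point is justifying the ``clean partition'' claim above; this amounts to recognizing that heads are precisely the minimal obstructions to $m$-separation in the factorization, and is where the equivalence between the head-tail factorization and the global Markov property is ultimately concentrated, mirroring the analysis carried out in~\cite{R09}.
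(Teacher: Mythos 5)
First, a point of comparison: the paper does not prove this statement at all --- it is imported verbatim from \cite[Theorem 4]{R09} --- so there is no in-paper argument to match yours against, and your attempt must be judged on its own. On those terms, the forward direction has a genuine gap. You construct the canonical DAG $\widetilde{\GG}$ over $[p]\cup L$ and assert that any $f\in\M(\GG)$ lifts to a density $\widetilde{f}$ Markov to $\widetilde{\GG}$ whose margin over $[p]$ is $f$. No such lift exists in general: the set of margins of distributions Markov to $\widetilde{\GG}$ is a \emph{strict} subset of the global Markov model of the latent projection $\GG$, because such margins obey additional non-conditional-independence constraints (e.g.\ the Verma constraint for the ADMG $1\rightarrow 2\rightarrow 3\rightarrow 4$ with $2\leftrightarrow 4$) that $m$-separation in $\GG$ does not impose. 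Since the theorem asserts the factorization for \emph{every} distribution Markov to $\GG$, any argument routed through a hidden-variable DAG representation can only reach the smaller latent-margin model and therefore cannot establish the forward implication. This is precisely why \citet{R09} argues intrinsically, proving the head--tail factorization equivalent to the \emph{ordered local Markov property} for ADMGs (itself known to be equivalent to the global Markov property), rather than by marginalizing a DAG factorization.

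The backward direction is closer in spirit to a workable argument but remains a plan rather than a proof, and its central claim is false as stated: with $S=\an_\GG(A\cup B\cup C)$, the heads of $[S]_\GG$ partition all of $S$, which generally contains vertices outside $A\cup B\cup C$, so it cannot be that every head lies entirely in $A\cup C$ or entirely in $B\cup C$. What actually needs to be shown is that each set $H\cup\tail(H)$ falls on one side of the separation in an appropriate augmented graph, so that the product splits into a function of one block and a function of the other \emph{before} the extra ancestors are integrated out; this connectivity argument is exactly where the equivalence lives (compare the district-based bookkeeping in Lemma~\ref{lem: partitioning}), and it is the step you defer rather than supply.
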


Using Theorem~\ref{thm: richardson factorization}, we can extend the notion of an interventional distribution captured in equation~\eqref{eqn: interventional distribution} to ADMGs: 

\begin{definition}
\label{def: ADMG interventional density}
A distribution with density function $f^{(I)}$ is called an \emph{interventional distribution} with respect to the intervention target $I$ and ADMG $\GG$ if for all $A\in\A(\GG)$
\begin{equation}
\label{eqn: ADMG interventional factorization}
\begin{split}
f(x_A) &= \prod_{H\in[A]_\GG : I\cap H \neq \emptyset}f^{(I)}(x_H \mid x_{\tail(H)})\prod_{H\in[A]_\GG : I\cap H = \emptyset}f^{(\emptyset)}(x_H \mid x_{\tail(H)}),
\end{split}
\end{equation}
where $f^{(\emptyset)}$ is the density of a distribution Markov to $\GG$. 
\end{definition}

The collection of sequences of interventional distributions $(f^{(I)})_{I\in\I}$ that can be generated by intervening on a collection of targets $\I$ with respect to an observational density Markov to an ADMG $\GG$ is then
\begin{equation}
\label{eqn: ADMG interventional density sets}
\begin{split}
\M_\I(\GG) := \{&(f^{(I)})_{I\in\I} \mid \mbox{$\forall I,J\in\I$ } \mbox{ and $A\in\A(\GG)$: $f^{(I)}\in\M(\GG)$ and } \\
	& f^{(I)}(x_H\mid x_{\tail(H)}) = f^{(J)}(x_H\mid x_{\tail(H)}) \mbox{ $\forall H\in[A]_\GG$: $H\cap(I\cup J) = \emptyset$}\}.
\end{split}
\end{equation}
A proof of this fact is given in the appendix.

\begin{remark}
\label{rmk: causal connection}
In Theorem~\ref{thm: causal alignment}, we will show that our definition for general interventions with respect to an ADMG (Definition~\ref{def: ADMG interventional density}) and our corresponding generalization of the collection $\M_\I(\GG)$ to ADMGs capture the intuitive notion of intervention given by interpreting a distribution Markov to an ADMG as the marginal over the observed variables in a distribution Markov to a causal DAG.
To prove Theorem~\ref{thm: causal alignment}, we will first characterize when $\M_\I(\GG) = \M_\I(\HH)$ for two ADMGs $\GG$ and $\HH$.
\end{remark}

Definition~\ref{def: ADMG interventional density} and \eqref{eqn: ADMG interventional density sets}  are the natural extensions of equations~\eqref{eqn: interventional distribution} and~\eqref{eqn: DAG interventional distributions} to the family of ADMGs.  
With these definitions, we can generalize the \emph{$\I$-Markov property for DAGs} introduced in \cite[Definition 3.6]{YKU18} by simply passing from $d$-separation to $m$-separation:

\begin{definition}[$\I$-Markov Property]
\label{def: ADMG local I-Markov properties}
Let $\I$ be a collection of intervention targets such that $\emptyset\in\I$ and $(f^{(I)})_{I\in\I}$ a set of strictly positive distributions over $X_1,\ldots,X_p$.  
For an ADMG $\GG = ([p], E)$, we say that $(f^{(I)})_{I\in\I}$ satisfies the \emph{$\I$-Markov property} with respect to $\GG^\I$ if and only if
\begin{enumerate}[1.]
	\item $X_A \independent X_B \mid X_C$ in $f^{(I)}$ for all $I\in\I$ and any $A,B,C\subset[p]$ for which $\langle A,B \mid C\rangle \in \J(\GG)$.  
	\item $f^{(\emptyset)}(X_B\mid X_C) = f^{(I)}(X_B\mid X_C)$ for any $I\in\I$ and $B,C\subset[p]$ such that \\$\langle B,\{\omega_I\} \mid C\cup W_\I\setminus\{\omega_I\}\rangle\in\J(\GG^\I)$.  
\end{enumerate}
\end{definition}

\begin{remark}
\label{rmk: causal I-Markov property}
\citet[Definition 1]{KJSB19} recently introduced an alternative $\I$-Markov property that is more restrictive than Definition~\ref{def: ADMG local I-Markov properties}.  
The original definition of \citet{YKU18}, and Definition~\ref{def: ADMG local I-Markov properties}, are designed to characterize exactly the invariances between observational and interventional distributions detectable in their factorizations, whereas the definition of \citet{KJSB19} is motivated by a generalization of Pearl's do-calculus to soft interventions \cite{CB19}.  
Consequently, their $\I$-Markov property, which we will call the \emph{controlled $\I$-Markov property}, is tied to additional invariances which are exhibited when the interventions are \emph{controlled}; meaning that if $I,J\in\I$ both target the node $j$, then they change the causal mechanism associated to $j$ in the exact same way.  
That is, $f^{(I)}(X_j\mid X_{\pa_\GG{(j)}}) = f^{(J)}(X_j\mid X_{\pa_\GG{(j)}})$.  
It follows that the controlled $\I$-Markov property imposes more invariances than the $\I$-Markov property.
In Section~\ref{app: comparing}, we show for any ADMG $\GG$ and collection of intervention targets $\I$, the collection of interventional settings satisfying the controlled $\I$-Markov property with respect to $\GG$ is a (generally strict) subset of those satisfying the $\I$-Markov property.  
Even for DAGs, this inclusion is often strict.  
The assumption that all interventions are controlled can be restrictive in, for example, biological contexts when using different reagents to regulate protein expression: Two reagents may target the same molecule but regulate its expression in very different ways.  
The theory we provide here does not assume interventions are controlled and, hence, is applicable to these more general contexts. 
\end{remark}

\citet[Proposition 3.8]{YKU18} showed that for a DAG $\GG$, if $\emptyset\in\I$, then $(f^{(I)})_{I\in\I}$ is in $\M_\I(\GG)$ if and only if $(f^{(I)})_{I\in\I}$ satisfies the $\I$-Markov property with respect to $\GG^\I$.  
The following theorem generalizes this result to all ADMGs, and hence characterizes interventional model equivalence for ADMGs from the perspective of distributional invariances in their associated factorization criteria.

\begin{theorem}
\label{thm: local I-Markov equivalence characterization}
Suppose $\emptyset\in\I$ and $\GG$ is an ADMG.  
Then $(f^{(I)})_{I\in\I}\in\M_\I(\GG)$ if and only if $(f^{(I)})_{I\in\I}$ satisfies the $\I$-Markov property with respect to $\GG^\I$.  
\end{theorem}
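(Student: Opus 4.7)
The plan is to adapt the proof of \cite[Proposition 3.8]{YKU18} from DAGs to ADMGs by substituting Richardson's factorization (Theorem~\ref{thm: richardson factorization}) for the DAG factorization~\eqref{eqn:DAG} and $m$-separation for $d$-separation. The central ingredient is a graph-theoretic lemma linking the head/tail decomposition of $\GG$ to $m$-separations involving the interventional nodes of $\GG^\I$: for every $I \in \I$, every $A \in \A(\GG)$, and every head $H \in [A]_\GG$ with $H \cap I = \emptyset$, we have $\langle H, \{\omega_I\} \mid \tail(H) \cup (W_\I \setminus \{\omega_I\})\rangle \in \J(\GG^\I)$. This \emph{Key Lemma} follows from a case analysis on a hypothetical $m$-connecting path from $\omega_I$: such a path must begin with an edge $\omega_I \to i$ for some $i \in I$ and then continue inside $\GG$; using $H = \barren_\GG(\an_\GG(H))$ together with the closure of $\dis_{\an(H)}(H)$ under bidirected adjacency inside $\an(H)$, every continuation is either blocked by a non-collider lying in $\tail(H)$ or fails the ancestral condition at a collider.

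Given the Key Lemma, the backward implication is direct. If $(f^{(I)})_{I\in\I}$ satisfies the $\I$-Markov property, then Condition~1 states precisely that each $f^{(I)} \in \M(\GG)$. For the head-invariance in~\eqref{eqn: ADMG interventional density sets}, fix $I, J \in \I$, $A \in \A(\GG)$, and $H \in [A]_\GG$ with $H \cap (I \cup J) = \emptyset$; the Key Lemma places both $\langle H, \{\omega_I\} \mid \tail(H) \cup W_\I \setminus \{\omega_I\}\rangle$ and its analogue for $\omega_J$ in $\J(\GG^\I)$, so Condition~2 of the $\I$-Markov property yields
\[
f^{(I)}(x_H \mid x_{\tail(H)}) = f^{(\emptyset)}(x_H \mid x_{\tail(H)}) = f^{(J)}(x_H \mid x_{\tail(H)}),
\]
exhibiting $(f^{(I)})_{I\in\I} \in \M_\I(\GG)$.

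For the forward implication, assume $(f^{(I)})_{I\in\I} \in \M_\I(\GG)$. Condition~1 of the $\I$-Markov property is built into the definition of $\M_\I(\GG)$. For Condition~2, given $\langle B, \{\omega_I\} \mid C \cup W_\I \setminus \{\omega_I\}\rangle \in \J(\GG^\I)$, set $A = \an_\GG(B \cup C) \in \A(\GG)$ and apply Theorem~\ref{thm: richardson factorization} to both $f^{(I)}(x_A)$ and $f^{(\emptyset)}(x_A)$. The separation hypothesis together with the contrapositive of the Key Lemma forces every head $H \in [A]_\GG$ whose factor does not fully cancel in the ratio $f(x_{B \cup C})/f(x_C)$ to satisfy $H \cap I = \emptyset$; by the $\emptyset$-versus-$I$ invariance encoded in $\M_\I(\GG)$ (which applies since $\emptyset \in \I$), each such factor agrees between $f^{(I)}$ and $f^{(\emptyset)}$, so $f^{(I)}(x_B \mid x_C) = f^{(\emptyset)}(x_B \mid x_C)$.

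The main obstacle is the Key Lemma itself. Unlike the DAG case, where $d$-connection can be tracked vertex by vertex through parent sets, a head $H$ in an ADMG is a multi-vertex subset of the induced bidirected graph and $\tail(H)$ is assembled from a district inside $\GG\langle \an(H)\rangle$ together with its $\GG$-parents. Ruling out every candidate $m$-connecting path from $\omega_I$ to $H$ thus requires the simultaneous use of $H = \barren_\GG(\an_\GG(H))$, the closure of $\dis_{\an(H)}(H)$, and the ancestral condition at colliders, together with the observation that the only edges incident to $\omega_I$ in $\GG^\I$ are outgoing arrows into $I$. Once this lemma is in hand, the remainder of the argument is the ADMG analogue of \cite[Proposition 3.8]{YKU18} with $m$-separation replacing $d$-separation.
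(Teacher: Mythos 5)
Your direction from the $\I$-Markov property to $\M_\I(\GG)$ matches the paper's argument, and your Key Lemma is exactly the separation statement the paper uses there; the paper obtains it in one line by observing that for $H\cap I=\emptyset$ one has $\tail_{\GG^\I}(H)=\tail_\GG(H)$ and that $\tail(H)$ is the Markov blanket of the head $H$ in $\GG^\I$ (citing \cite[Section 3.1]{R09}), so your planned path-by-path case analysis, while workable, does more labor than necessary. The genuine problem is the other direction. You write that the separation $\langle B,\{\omega_I\}\mid C\cup W_\I\setminus\{\omega_I\}\rangle\in\J(\GG^\I)$ together with the contrapositive of the Key Lemma ``forces every head $H\in[A]_\GG$ whose factor does not fully cancel in the ratio $f(x_{B\cup C})/f(x_C)$ to satisfy $H\cap I=\emptyset$.'' This is not an argument. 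First, the Key Lemma's contrapositive only concerns conditioning sets of the form $\tail(H)\cup W_\I\setminus\{\omega_I\}$, whereas here the conditioning set is an arbitrary $C$, so it says nothing about which heads are reachable from $\omega_I$ given $C$. Second, and more seriously, ``cancellation in the ratio'' is not well defined once marginalization enters: $f(x_{B\cup C})=\int_{A\setminus(B\cup C)}\prod_{H\in[A]_\GG}f(x_H\mid x_{\tail(H)})$, and the non-invariant factors (those with $H\cap I\neq\emptyset$) do not disappear under this integral unless the product literally splits into two blocks sharing variables only through $C$.

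That splitting is the missing combinatorial core, and it is where the paper does its real work (Lemma~\ref{lem: partitioning} and Remark~\ref{rmk: parents of heads}). One sets $A=\an_\GG(B\cup C)$, lets $Z$ be the set of vertices of $A$ that are $m$-connected to $\omega_I$ given $C\cup W_\I\setminus\{\omega_I\}$ in $\GG^\I$, puts $B'=A\setminus(Z\cup C)$, and splits $C$ into $C'$ (vertices with a parent or spouse in $B'$) and $C''$. One must then prove that no district of $\GG$ meets both $B'$ and $Z$, nor both $C'$ and $Z$, and that the heads and their tails respect this split, so that $f^{(\hat{I})}(x_A)=g(x_{B'},x_{C'})\,h(x_Z,x_{C''};\hat{I})$ with $g$ invariant across $\hat{I}\in\{\emptyset,I\}$. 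Only after this product decomposition can one marginalize out $Z$ and $B'\setminus B$ and condition on $C$ to conclude $f^{(I)}(x_B\mid x_C)=f^{(\emptyset)}(x_B\mid x_C)$; the point is that the $\hat{I}$-dependent factor integrates to a function of $x_{C''}$ alone and therefore drops out of the conditional. Your proposal contains no substitute for this step, and identifying the Key Lemma as ``the main obstacle'' misplaces where the difficulty actually lies.
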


\subsection{Interventions in ADMGs causally} 
\label{subsec: causal perspective}
Since ADMGs can be used to represent causal DAGs with latent confounders, we would like to see that our definition of interventional setting given in~\eqref{eqn: ADMG interventional density  sets} captures the intuitive of notion interventional setting arising from this perspective.  
To formalize this, we first recall the connection between causal DAGs with latent confounders and ADMGs.  
\begin{definition}
\label{def: latent projection}
Let $\GG$ be a DAG with vertex set $[p]\sqcup L$, where the vertices in $[p]$ are observed and those in $L$ are latent, and $\sqcup$ denotes a disjoint union.
The \emph{latent projection} $\GG([p])$ is a mixed graph with vertex set $[p]$ such that for every pair of distinct vertices $i,j\in[p]$
\begin{enumerate}[1.]
	\item $\GG([p])$ contains an edge $i\rightarrow j$ if there exists a directed path in $\GG$ $i\rightarrow\cdots\rightarrow j$ on which every non-endpoint vertex is in $L$,  and
	\item $\GG([p])$ contains an edge $i\leftrightarrow j$ if there exists a path in $\GG$ between $i$ and $j$ such that the endpoints are all non-colliders in $L$ and such that the edge adjacent to $i$ and the edge adjacent to $j$ both have arrowheads pointing towards $i$ and $j$, respectively.  
\end{enumerate}
\end{definition}
Since $\GG = ([p]\sqcup L,E)$ is a DAG, we see that $\GG([p])$ is an ADMG.  
In the following, given $f\in\M(\GG)$, we let $\tilde{f}:=\int_LfdL$ denote the marginal density of $f$ over $[p]$.  
It is well-known that $\tilde{f}$ is Markov to $\GG([p])$ (see for instance \cite[Theorem~7.1]{RS02}).

Given an ADMG $\HH = ([p],E)$ we can always produce a DAG $\GG = ([p]\sqcup L,E^\prime)$ such that $\HH = \GG([p])$ by replacing each bidirected arrow $i\leftrightarrow j$ in $\HH$ with $i\leftarrow \ell_{i,j}\rightarrow j$ and letting $L :=\{\ell_{i,j}: i\leftrightarrow j \text{ an edge in } \HH\}$.  
Thus, given a DAG $G = ([p]\sqcup L,E)$ and an interventional setting $(f^{(I)})_{I\in\I}\in \M_\I(\GG)$ for $\I = \{\emptyset,I_1,\ldots,I_K\}$ with $I_k\in[p]$ for all $k\in[K]$ we would like to see that $(\tilde{f}^{(I)})_{I\in\I}\in\M_\I(\GG([p]))$.  
This would imply that the invariance properties used to define interventional densities and interventional settings for ADMGs in Definition~\ref{def: ADMG interventional density} and \eqref{eqn: ADMG interventional density  sets} align with the standard notion given by Definition~\ref{eqn: interventional distribution} and the causal interpretation of ADMGs as latent projections of causal DAGs.
To prove this, we use the fact that $d$-separation statements on the observed variables in the DAG $\GG = ([p]\sqcup L,E)$ are in one-to-one correspondence with the $m$-separation statements in the latent projection $\GG([p])$.  
A proof of this fact appears first in \cite{V93} and later on in \cite[Proposition 4]{RERS17}.
Combining this fact with our Theorem~\ref{thm: local I-Markov equivalence characterization}, we can prove that our definition of interventional setting for ADMGs aligns with the causally intuitive notion.
\begin{theorem}
\label{thm: causal alignment}
Let $\GG = ([p]\sqcup L,E)$ be a DAG, where $[p]\sqcup L$ is a disjoint union.  
Let $\I$ be a collection of intervention targets satisfying $\emptyset\in\I$ and $I\subset [p]$ for all $I\in\I$.  
If $(f^{(I)})_{I\in\I}\in\M_\I(\GG)$ then $(\tilde{f}^{(I)})_{I\in\I}\in\M_\I(\GG([p]))$.  
\end{theorem}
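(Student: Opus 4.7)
The plan is to reduce the claim to Theorem~\ref{thm: local I-Markov equivalence characterization} applied on both sides of the latent projection. Since $\emptyset\in\I$ and $\GG$ is (trivially) an ADMG, that theorem tells us that the hypothesis $(f^{(I)})_{I\in\I}\in\M_\I(\GG)$ is equivalent to $(f^{(I)})_{I\in\I}$ satisfying the $\I$-Markov property with respect to $\GG^\I$. Because $\GG([p])$ is also an ADMG, it suffices to verify that the marginal family $(\tilde{f}^{(I)})_{I\in\I}$ satisfies the $\I$-Markov property with respect to $\GG([p])^\I$, after which a second application of Theorem~\ref{thm: local I-Markov equivalence characterization} yields the desired membership in $\M_\I(\GG([p]))$.

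The graphical bridge I would establish is the identity
\[
\GG^\I\big([p]\cup W_\I\big)\;=\;\GG([p])^\I,
\]
where the left-hand side denotes the latent projection of $\GG^\I$ onto $[p]\cup W_\I$. This should be a direct unpacking of the two definitions: each $\omega_I$ is a source in $\GG^\I$ whose out-neighborhood lies entirely in $[p]$, and no node of $L$ is adjacent to any $\omega_I$, so no latent path can alter the edges incident to $W_\I$ under the latent projection. Consequently, forming $\GG^\I$ first and projecting afterwards reproduces $\GG([p])^\I$ in both the $\omega_I\to i$ edges and in the directed/bidirected structure on $[p]$.

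With that identity in place, both clauses of the $\I$-Markov property for $(\tilde{f}^{(I)})_{I\in\I}$ will follow by combining the $d$-separation--$m$-separation correspondence between a DAG and its latent projection (\cite{V93} and \cite[Proposition~4]{RERS17}) with the elementary observation that, for $B,C\subset[p]$, the conditional marginal $\tilde{f}^{(I)}(x_B\mid x_C)$ equals $f^{(I)}(x_B\mid x_C)$, because marginalization over $L$ commutes with further marginalization over $[p]\setminus(B\cup C)$. For Condition~1, any $\langle A,B\mid C\rangle\in\J(\GG([p]))$ with $A,B,C\subset[p]$ translates to a $d$-separation of $A$ and $B$ given $C$ in $\GG$, so $X_A\independent X_B\mid X_C$ holds under $f^{(I)}$ and descends to $\tilde{f}^{(I)}$. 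For Condition~2, any $\langle B,\{\omega_I\}\mid C\cup W_\I\setminus\{\omega_I\}\rangle\in\J(\GG([p])^\I)$ translates through the graphical identity to the same separation in $\GG^\I$; the $\I$-Markov property already verified for $(f^{(I)})$ with respect to $\GG^\I$ then gives $f^{(\emptyset)}(x_B\mid x_C)=f^{(I)}(x_B\mid x_C)$, which by the marginal-density remark is the required equality for the tildes.

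The main obstacle I anticipate is the graphical identity $\GG^\I([p]\cup W_\I)=\GG([p])^\I$. Although intuitively immediate, it should be checked edge-type by edge-type, confirming in particular that the latent projection introduces no spurious bidirected edges between $W_\I$ and $[p]$ or among $W_\I$ themselves, and no additional directed edges into $W_\I$. Once that lemma is in hand, both conditions of the $\I$-Markov property transfer almost mechanically, and the final invocation of Theorem~\ref{thm: local I-Markov equivalence characterization} applied to the ADMG $\GG([p])$ delivers $(\tilde{f}^{(I)})_{I\in\I}\in\M_\I(\GG([p]))$.
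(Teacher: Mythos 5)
Your proposal is correct and follows essentially the same route as the paper's proof: reduce via Theorem~\ref{thm: local I-Markov equivalence characterization} (the paper invokes the DAG case \cite[Proposition~3.8]{YKU18} for $\GG^\I$, which is the same content), use the identity $\GG^\I([p]\cup W_\I)=\GG([p])^\I$ together with Lemma~\ref{lem: d and m-separation} to transfer the separation statements, and observe that $\tilde{f}^{(I)}(x_B\mid x_C)=f^{(I)}(x_B\mid x_C)$ for $B,C\subset[p]$. Your treatment of the latent-projection identity is in fact slightly more explicit than the paper's, which asserts it without comment.
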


\subsection{Directed ancestral graphs} 
\label{subsec: directed AGs}
Our last goal is to connect the interventional settings for ADMGs with the combinatorial characterization of $\I$-Markov equivalence for formal independence models given in Theorem~\ref{thm: main}.
Theorem~\ref{thm: local I-Markov equivalence characterization} says two ADMGs $\GG$ and $\HH$ satisfy $\M_\I(\GG) = \M_\I(\HH)$ if and only if they have the same $\I$-Markov properties.  
Theorem~\ref{thm: main} characterizes $\I$-MECs of AGs via global Markov properties.  
Our last theorem says these two generalizations coincide for LMGs that are both AGs and ADMGs; i.e., for directed AGs.  
This unifies all of our previous results, and it completely generalizes Theorem~\ref{thm: YKU characterization} to a context in which we can incorporate latent confounders without assuming controlled interventions.  

\begin{theorem}
\label{thm: coincide}
Let $\GG$ and $\HH$ be two directed AGs and let $\I$ be a collection of intervention targets.  
Suppose $\emptyset\in\I$.  Then the following are equivalent:
\begin{enumerate}[1.]
	\item $\GG$ and $\HH$ are $\I$-Markov equivalent,
	\item $\overline{\GG}$ and $\overline{\HH}$ are $\I$-Markov equivalent,
	\item $\GG$ and $\HH$ are Markov equivalent and $\overline{\GG}^\I$ and $\overline{\HH}^\I$ have the same $\I$-colliders with order, and
	\item $\M_\I(\GG) = \M_\I(\HH)$.  
\end{enumerate}
\end{theorem}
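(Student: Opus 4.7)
The plan is to combine the two previously established results---Theorem~\ref{thm: main}, which handles $\I$-Markov equivalence at the level of formal independence models for AGs, and Theorem~\ref{thm: local I-Markov equivalence characterization}, which ties $\M_\I(\GG)$ for ADMGs to the $\I$-Markov property. Since every directed AG is both an AG and an ADMG, both results apply to $\GG$ and $\HH$. First, I would obtain $(1)\iff(2)\iff(3)$ as an immediate consequence of Theorem~\ref{thm: main}: $\GG$ and $\HH$ are directed AGs, so $\sa(\GG)=\sa(\HH)=\emptyset$ and the double-intervention hypothesis is vacuous; furthermore, since MAG completion preserves formal independence models, the condition ``$\GG$ and $\HH$ are Markov equivalent'' in~(3) matches the condition ``$\overline{\GG}$ and $\overline{\HH}$ are Markov equivalent'' appearing in Theorem~\ref{thm: main}(3). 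It then remains to prove $(1)\iff(4)$.

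For $(1)\Rightarrow(4)$, I would apply Theorem~\ref{thm: local I-Markov equivalence characterization} and show that $\GG$ and $\HH$ induce the same $\I$-Markov property. Condition~2 of Definition~\ref{def: ADMG local I-Markov properties} lists separations drawn directly from $\J(\GG^\I)$ and $\J(\HH^\I)$, which coincide by hypothesis; and for Condition~1, which demands $\J(\GG)=\J(\HH)$, I would use that each $\omega_I$ is a source node in $\GG^\I$ and is therefore never a collider on any path through it, so conditioning on all of $W_\I$ blocks every path of $\GG^\I$ that leaves the original subgraph $\GG$. This gives the equivalence $\langle A,B\mid C\rangle\in\J(\GG)\iff\langle A,B\mid C\cup W_\I\rangle\in\J(\GG^\I)$ for $A,B,C\subset[p]$, and the same for $\HH$, so $(1)$ yields $\J(\GG)=\J(\HH)$.

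For the harder direction $(4)\Rightarrow(1)$, Theorem~\ref{thm: local I-Markov equivalence characterization} supplies $\J(\GG)=\J(\HH)$ together with agreement on all separations of the restricted form $\langle B,\omega_I\mid C\cup W_\I\setminus\{\omega_I\}\rangle$ with $B,C\subset[p]$, and one must upgrade these partial data to the full equality $\J(\GG^\I)=\J(\HH^\I)$. Note that $\GG^\I$ is again a directed AG whenever $\GG$ is (each $\omega_I$ is added as a source with only outgoing directed edges, creating no cycles or ancestral violations), so $\J(\GG^\I)$ is probabilistic and the graphoid axioms are available. My approach would be to take an arbitrary triple $\langle A,B\mid C\rangle$, split each of $A,B,C$ into its observed and interventional parts, enumerate the possible path types in $\GG^\I$ traversing the interventional nodes, and reduce each blocking condition to a combination of separations in $\J(\GG)$ together with the Condition~2 separations. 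The crucial structural feature enabling this reduction is that on any path through $\omega_J$, the node $\omega_J$ is either an endpoint exiting via $\omega_J\to j$ with $j\in J$ or an interior non-collider source, so the in/out status of $\omega_J$ in the conditioning set acts as a rigid switch on the corresponding target-to-target segments.

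The main obstacle is executing this combinatorial reduction: the Condition~2 separations are maximally padded, conditioning on all of $W_\I\setminus\{\omega_I\}$ and isolating only a single $\omega_I$ from an observed subset, whereas a generic triple in $\J(\GG^\I)$ may involve multiple interventional nodes on each side or non-maximal conditioning on $W_\I$. A cleaner alternative would be to prove $(4)\Rightarrow(3)$ directly, by translating the Condition~2 separations into statements about $\I$-colliders with order in $\overline{\GG}^\I$ and $\overline{\HH}^\I$ via the combinatorial characterization of colliders with order, and then invoking the already-established $(3)\Rightarrow(1)$; this avoids having to reason about arbitrary separations in the full $\I$-LMG and instead reduces the problem to the familiar MAG-completion machinery used in the proof of Theorem~\ref{thm: main}.
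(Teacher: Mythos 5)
Your handling of $(1)\Leftrightarrow(2)\Leftrightarrow(3)$ and of $(1)\Rightarrow(4)$ matches the paper: the first three items follow from Theorem~\ref{thm: main} since the double-intervention hypothesis is vacuous for directed AGs, and $(1)\Rightarrow(4)$ follows from Theorem~\ref{thm: local I-Markov equivalence characterization} once one observes (as you correctly do, in more detail than the paper) that $\langle A,B\mid C\rangle\in\J(\GG)$ iff $\langle A,B\mid C\cup W_\I\rangle\in\J(\GG^\I)$ because every $\omega_I$ is a source and hence a non-collider on every path through it.

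The gap is in $(4)\Rightarrow(1)$, and it is twofold. First, you assert that Theorem~\ref{thm: local I-Markov equivalence characterization} ``supplies'' $\J(\GG)=\J(\HH)$ together with agreement on the Condition~2 separations. It does not: that theorem identifies $\M_\I(\GG)$ with the set of interventional settings satisfying the $\I$-Markov property, so $\M_\I(\GG)=\M_\I(\HH)$ only says the two constraint systems have the same \emph{solution set}. To conclude that the constraints themselves agree one must, for each separation holding in one graph but not the other, exhibit a witness interventional setting lying in one model but not the other. The paper does exactly this in contrapositive form: it uses the existence of distributions faithful to an ancestral graph to handle a discrepancy in a separation of the form $\langle X,Y\mid Z, W_\I\rangle$, and the explicit construction of \cite[Lemma 4]{KJSB19} (building a non-invariance $f^{(\emptyset)}(X\mid Z)\neq f^{(\hat I)}(X\mid Z)$ along an $m$-connecting path) to handle a discrepancy in a separation of the form $\langle \omega_{\hat I},T\mid S,W_\I\setminus\{\omega_{\hat I}\}\rangle$. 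Your proposal contains neither construction. Second, the ``combinatorial reduction'' you flag as the main obstacle --- showing that if $\J(\GG^\I)\neq\J(\HH^\I)$ then the graphs already differ on a separation of one of those two canonical forms --- is genuinely needed and is not something you carry out; the paper imports it wholesale as \cite[Lemma 5]{KJSB19}, noting that the argument there is insensitive to the precise choice of augmenting nodes. Your sketched path-enumeration and your alternative route through $(4)\Rightarrow(3)$ via $\I$-colliders with order are both plausible programs, but as written neither is executed, so the direction $(4)\Rightarrow(1)$ remains unproved in your proposal.
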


\begin{remark}
\label{rmk: refined}
\citet{KJSB19} defined a notion of $\I$-Markov equivalence by saying that two ADMGs, $\GG$ and $\HH$, are $\I$-Markov equivalent if and only if the set of distributions satisfying the controlled $\I$-Markov property with respect to $\GG$ and $\I$ and $\HH$ and $\I$ are the same.  
Under the assumption of controlled interventions, they characterized this phenomenon in a similar fashion as to our Theorem~\ref{thm: coincide}.  
Since, as noted in Remark~\ref{rmk: causal I-Markov property}, the assumption of controlled interventions requires that (in general strictly) more invariances hold among the distributions in an interventional setting satisfying the controlled $\I$-Markov property, the $\I$-Markov equivalence classes studied by \citet{KJSB19} should refine the $\I$-Markov equivalence classes identified by Theorem~\ref{thm: coincide}.  
In Section~\ref{app: comparing}, we show that this is indeed the case, and we give a graphical characterization of how this refinement occurs.  
While finer $\I$-Markov equivalence classes is of course desirable from a causal learning perspective, if the controlled assumption is too restrictive for the application at hand, one can now use the slightly larger $\I$-MECs characterized by Theorem~\ref{thm: coincide}.
\end{remark}

Theorem~\ref{thm: coincide} also shows that $\I$-Markov equivalence of directed AGs corresponds to invariance properties with respect to the factorization criterion in Theorem~\ref{thm: richardson factorization}.  
Therefore, these invariance properties of factorization criteria also hold for the interventional settings studied by \citet{KJSB19}.  
In this way, Theorem~\ref{thm: coincide} yields a complete theory of general interventions in directed AG models; i.e., a theory in which intervention is understood both from the perspective of Markov properties and in terms of distributional invariances with respect to the model factorization criterion. 
However, the specific invariances arising from the controlled assumption are not immediately visible in this factorization criterion.  
It would be interesting to know if controlled interventional settings admit an even more refined factorization criterion that exhibits all of their invariance properties. 

It is also interesting to note that Theorem~\ref{thm: coincide} yields a different characterization of $\I$-Markov equivalence given any characterization of Markov equivalence of AGs.  
Currently, there are three known characterizations of Markov equivalence for AGs: one due to \citet{ARS09}, another due to \citet{ZZL05}, and a third by \citet{SR97}.  
Applying Theorem~\ref{thm: coincide} to the first, we see that $\M_\I(\GG) = \M_\I(\HH)$ if and only if $\overline{\GG}^\I$ and $\overline{\HH}^\I$ have the same adjacencies and colliders with order.  Applying the second one, Theorem~\ref{thm: coincide} implies that $\M_\I(\GG) = \M_\I(\HH)$ if and only if $\overline{\GG}^\I$ and $\overline{\HH}^\I$ have the same $\I$-colliders with order and the same minimal collider paths.  Applying the third recovers yet another characterization. 

\section{Comparing Alternative $\I$-Markov Properties} 
\label{app: comparing}
\citet{KJSB19} recently investigated characterizations of $\I$-Markov equivalence under general interventions as well.  
To do so, they introduced a slightly different $\I$-Markov property than that of \citet[Definition 3.6]{YKU18} or our generalization thereof (Definition~\ref{def: ADMG local I-Markov properties}).  
Instead, their $\I$-Markov property, which we will refer to as the \emph{controlled $\I$-Markov property} is the following:
\begin{definition}
\cite[Definition 1]{KJSB19}
\label{def: causal I-Markov property}
Let $\I$ be a collection of intervention targets and $(f^{(I)})_{I\in\I}$ a set of strictly positive distributions over $X_1,\ldots,X_p$.  
We say that $(f^{(I)})_{I\in\I}$ satisfies the \emph{controlled $\I$-Markov property} with respect to an ADMG $\GG = ([p],E)$ if and only if
\begin{enumerate}[1.]
	\item $X_A \independent X_B \mid X_C$ in $f^{(I)}$ for all $I\in\I$ and any $A,B,C\subset[p]$ for which $\langle A,B \mid C\rangle \in \J(\GG)$, and 
	\item $f^{(I)}(X_B\mid X_C) = f^{(J)}(X_B\mid X_C)$ for any two $I,J\in\I$ and $B,C\subset[p]$ such that 
	\[
	\langle B,\{\omega_{I\triangle J} \mid C\cup W_{\I\triangle\I}\setminus\{\omega_{I\triangle J}\}\rangle\in\J(\GG^{\I\triangle\I}),
	\]
	where $\I\triangle\I :=\{ I\triangle J : I,J\in\I\}$ is the set of all symmetric differences of the sets in $\I$.  

\end{enumerate}
\end{definition}
We refer to Definition~\ref{def: causal I-Markov property} as the controlled $\I$-Markov property since it is motivated by a recent generalization of Pearl's do-calculus to the case of soft interventions \cite{CB19}.  
\citet[Corollary $1$]{KJSB19} showed that property (2) of the controlled $\I$-Markov property is a combined version of the `do-do' and `do-see' rules from this generalized do-calculus whenever the interventions $\I$ are assumed to be controlled.  
In this way, the controlled $\I$-Markov property is a more restrictive version of the $\I$-Markov property presented in Definition~\ref{def: ADMG local I-Markov properties} since it is inherently tied to the assumption that the interventions are controlled.  
We note also that the phrasing of Definition~\ref{def: causal I-Markov property} is not the same as that of \cite[Definition $1$]{KJSB19}, but instead uses the language and notation developed in this paper. 
However, the equivalence of these two definitions follows immediately from \cite[Proposition $1$]{KJSB19}.

Let $\M_\I^{(c)}(\GG)$ denote the set of all $(f^{(I)})_{I\in\I}$ satisfying the controlled $\I$-Markov property with respect to $\GG$. 
\citet{KJSB19} defined two graphs $\GG$ and $\HH$ to be $\I$-Markov equivalent whenever $\M_\I^{(c)}(\GG) = \M_\I^{(c)}(\HH)$.  
We will refer to this as \emph{controlled $\I$-Markov equivalence}.
The two differences between the $\I$-Markov property defined in \cite[Definition 3.6]{YKU18} and Definition~\ref{def: ADMG local I-Markov properties} and the controlled $\I$-Markov property is the choice for property (2) and the fact that the controlled $\I$-Markov property does not assume $\emptyset\in\I$.  
As is seen in \cite{YKU18}, the assumption that $\emptyset\in\I$ can be readily dropped by treating a fixed interventional distribution as the observational distribution.  
So we can then assume (without loss of generality) that $\emptyset\in\I$.  
Then the only difference between the $\I$-Markov property studied here and the controlled $\I$-Markov property is the choice of property (2).  

Property (2) of the $\I$-Markov property, as originally introduced in \cite[Definition 3.6]{YKU18}, was defined to characterize the invariances in the interventional distribution 
\[
f^{(I)}(x) = \prod_{i\in I}f^{(I)}(x_i \mid x_{\pa_\GG(i)})\prod_{i\in[p]\setminus I}f^{(\emptyset)}(x_i \mid x_{\pa_\GG(i)})
\]
relative to a given observational distribution 
\[
f^{(\emptyset)}(x) = \prod_{i\in[p]}f^{(\emptyset)}(x_i \mid x_{\pa_\GG(i)})
\]
that is Markov to a DAG $\GG = ([p],E)$.  
Namely, property (2) of the $\I$-Markov property characterizes the invariances $f^{(I)}(x_j \mid x_{\pa_\GG(j)}) = f^{(J)}(x_j \mid x_{\pa_\GG(j)})$ in the causal mechanisms that arise whenever $j\notin I\cup J$ for two interventions $I,J\in\I$.  
The $\I$-Markov property that we define in Defintion~\ref{def: ADMG local I-Markov properties} directly generalizes this, and in doing so, property (2) of Definition~\ref{def: ADMG local I-Markov properties} captures all invariances in the causal mechanisms identified by the factorization criterion of Theorem~\ref{thm: richardson factorization}; that is to say, $f^{(I)}(x_H \mid x_{\tail(H)}) = f^{(J)}(x_H \mid x_{\tail(H)})$ whenever $H\cap(I\cup J) = \emptyset$.  

On the other hand, the controlled $\I$-Markov property was developed to capture the invariances in a recently introduced do-calculus for soft interventions \cite{CB19}.  
As noted in \cite[Corollary 1]{KJSB19}, property (2) of their controlled $\I$-Markov property corresponds to invariances that are guaranteed by the rules of this do-calculus for soft interventions when we assume that all interventions are controlled (as defined in Remark~\ref{rmk: causal I-Markov property}).  
The assumption that all interventions are controlled can be restrictive in, for example, biological contexts as described in Remark~\ref{rmk: causal I-Markov property}.   
Consequently, the controlled $\I$-Markov property is satisfied only by interventional settings $(f^{(I)})_{I\in\I}$ that satisfy (generally) more invariances than the invariances characterized by property (2) of the $\I$-Markov property studied in this paper.
Since the $\I$-Markov property studied here and in \cite{YKU18} is not tied to controlled interventions, the characterizations of $\I$-Markov equivalence given in Theorem~\ref{thm: coincide} apply to strictly more interventional settings than those captured by the controlled $\I$-Markov property.
We summarize these observations with the following proposition.  
To help emphasize these differences between these two $\I$-Markov properties, we include the proof here, along with an illustrative example in Figure~\ref{fig: minimal example}. 
\begin{proposition}
\label{prop: comparing properties}
Let $\I$ be a collection of intervention targets and $\emptyset\in\I$.  
For an ADMG $\GG = ([p],E)$, the $\I$-Markov property is weaker than the controlled $\I$-Markov property; that is,
\[
\M_\I^{(c)}(\GG) \subset \M_\I(\GG).
\]
Moreover, there exists DAGs $\GG$ and collections of intervention targets $\I$ for which
\[
\M_\I^{(c)}(\GG) \subsetneq \M_\I(\GG).
\]
\end{proposition}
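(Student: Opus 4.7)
For the inclusion $\M_\I^{(c)}(\GG) \subset \M_\I(\GG)$, I would fix $(f^{(I)})_{I \in \I} \in \M_\I^{(c)}(\GG)$ and observe that property~(1) of Definition~\ref{def: ADMG local I-Markov properties} coincides with property~(1) of Definition~\ref{def: causal I-Markov property}, so only property~(2) of the $\I$-Markov property needs verification. Given $I \in \I$ (the case $I = \emptyset$ being trivial) and $B, C \subset [p]$ with $\langle B, \{\omega_I\} \mid C \cup W_\I \setminus \{\omega_I\}\rangle \in \J(\GG^\I)$, the plan is to apply property~(2) of the controlled $\I$-Markov property with $J = \emptyset$, which is legal because $\emptyset \in \I$; then $I \triangle J = I$ and the desired invariance $f^{(\emptyset)}(X_B \mid X_C) = f^{(I)}(X_B \mid X_C)$ reduces to the graph-theoretic sub-claim that $\langle B, \{\omega_I\} \mid C \cup W_\I \setminus \{\omega_I\}\rangle \in \J(\GG^\I)$ implies $\langle B, \{\omega_I\} \mid C \cup W_{\I \triangle \I} \setminus \{\omega_I\}\rangle \in \J(\GG^{\I \triangle \I})$.

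To prove the sub-claim, I would argue by contradiction: suppose $\pi$ is an $m$-connecting path from $\omega_I$ to a node in $B$ in $\GG^{\I \triangle \I}$ given $C \cup W_{\I \triangle \I} \setminus \{\omega_I\}$. Any node $\omega_K$ with $K \in (\I \triangle \I) \setminus \I$ is a source of $\GG^{\I \triangle \I}$ and hence a non-collider on any path through it; since $K \neq I$, such an $\omega_K$ lies in the conditioning set and blocks $\pi$. Consequently $\pi$ is contained in the induced subgraph of $\GG^{\I \triangle \I}$ on $[p] \cup W_\I$, which agrees with $\GG^\I$. Because descendant sets of nodes in $[p] \cup W_\I$ are unchanged by the addition of the new sources, and because $\{v\} \cup \de(v) \subset [p] \cup W_\I$ for every $v$ on $\pi$, the two conditioning sets have identical traces on $\{v\} \cup \de(v)$. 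Therefore the collider and non-collider conditions along $\pi$ transfer verbatim to $\GG^\I$, producing an $m$-connecting path there and contradicting the hypothesis.

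For the strict inclusion, I would verify the following minimal example: let $\GG$ be the DAG on $\{1,2\}$ with the single edge $1 \to 2$ and take $\I = \{\emptyset, \{1\}, \{1,2\}\}$. Then $\I \triangle \I = \I \cup \{\{2\}\}$, so $\GG^{\I \triangle \I}$ carries the extra source $\omega_{\{2\}} \to 2$. Every path from $\omega_{\{2\}}$ to $1$ in $\GG^{\I \triangle \I}$ passes through the collider $2$, whose only descendant is itself; since $2 \notin \{\omega_{\{1\}}, \omega_{\{1,2\}}\}$, these paths are blocked and $\omega_{\{2\}}$ is $m$-separated from $1$ given that conditioning set. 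Applying the controlled $\I$-Markov property with $I = \{1,2\}$ and $J = \{1\}$ therefore forces $f^{(\{1\})}(X_1) = f^{(\{1,2\})}(X_1)$. By contrast, inside $\GG^\I$ both $\omega_{\{1\}}$ and $\omega_{\{1,2\}}$ are directly adjacent to $1$, so no separation statement involving either of them and $1$ holds; the $\I$-Markov property accordingly imposes no constraint among the three marginals of $X_1$. Hence any strictly positive $(f^{(I)})_{I \in \I}$ satisfying $f^{(\{1\})}(X_1) \neq f^{(\{1,2\})}(X_1)$ and $f^{(\{1\})}(X_2 \mid X_1) = f^{(\emptyset)}(X_2 \mid X_1)$ (the only nontrivial $\I$-Markov invariance) lies in $\M_\I(\GG) \setminus \M_\I^{(c)}(\GG)$.

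The main obstacle is the sub-claim of the first paragraph, which rests on two observations requiring careful bookkeeping: that every newly adjoined source $\omega_K$ is a non-collider on any path through it, so that conditioning on it blocks the path; and that descendant sets of old nodes are invariant under the passage from $\GG^\I$ to $\GG^{\I \triangle \I}$, so that collider conditions transport faithfully between the two graphs. Once these are in place, the remainder of the argument is a direct application of the definitions.
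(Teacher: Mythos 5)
Your proof is correct, and it is worth comparing the two halves separately. For the containment $\M_\I^{(c)}(\GG)\subset\M_\I(\GG)$ you take the same route as the paper --- apply property (2) of the controlled $\I$-Markov property with $J=\emptyset$ --- but you make explicit the step the paper leaves tacit: that $\langle B,\{\omega_I\}\mid C\cup W_\I\setminus\{\omega_I\}\rangle\in\J(\GG^\I)$ implies the corresponding separation in $\GG^{\I\triangle\I}$ given $C\cup W_{\I\triangle\I}\setminus\{\omega_I\}$. Your justification of that sub-claim is sound: every newly adjoined $\omega_K$ with $K\in(\I\triangle\I)\setminus\I$ is a source, hence a non-collider on any path through it, and lies in the conditioning set, so any $m$-connecting path is confined to the induced subgraph on $[p]\cup W_\I$, which is exactly $\GG^\I$; and since the new sources have no ancestors besides themselves and create no new directed paths among old nodes, the collider and non-collider conditions transfer. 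This is a genuine (if small) addition, since the paper's ``straightforward to see'' hides precisely this verification. For the strict inclusion you depart from the paper: the paper exhibits a general family of DAGs (two non-adjacent nodes $i,j$ with $\pa_\GG(i)=\pa_\GG(j)$ and $\an_\GG(i)\neq\pa_\GG(i)$, targets $I=\pa_\GG(i)\cup\{i\}$, $J=\pa_\GG(j)\cup\{j\}$) and invokes the construction of \cite[Lemma 4]{KJSB19} to produce the separating distributions, whereas you give the minimal two-node DAG $1\to 2$ with $\I=\{\emptyset,\{1\},\{1,2\}\}$ and write down the violating tuple explicitly (every distribution is Markov to the complete DAG, so the only $\I$-Markov constraint is $f^{(\{1\})}(X_2\mid X_1)=f^{(\emptyset)}(X_2\mid X_1)$, leaving the marginals of $X_1$ free, while the controlled property forces $f^{(\{1\})}(X_1)=f^{(\{1,2\})}(X_1)$ via the separation of $\omega_{\{2\}}$ from $1$). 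Your example is smaller and self-contained; the paper's shows the phenomenon persists across a whole family of graphs and ties the source of the extra invariance to the node $\omega_{I\triangle J}$ in a structurally transparent way. Both arguments are valid.
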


\begin{proof}
It is straightforward to see the containment $\M_\I^{(c)}(\GG) \subset \M_\I(\GG)$ by taking $\J = \emptyset$ in property (2) of Definition~\ref{def: causal I-Markov property}.  
As for seeing that there exist cases of strict containment, pick $i,j\in[p]$ and a DAG $\GG = ([p],E)$ for which $i$ and $j$ are not adjacent in $\GG$, $\pa_\GG(i) = \pa_\GG(j)$, and $\an_\GG(i)\neq \pa_\GG(i)$. 
Consider then the intervention targets 
\[
\I = \{
\emptyset, 
I := \pa_\GG(i)\cup\{i\}, 
J := \pa_\GG(j)\cup\{j\}
\}.
\]
Then 
\[
\I\triangle\I = \{
\emptyset, 
I, 
J,
I\triangle J = \{i,j\}
\}.
\]
A minimal example of such a DAG $\GG$ and the set $\I$ is given in Figure~\ref{fig: minimal example}. 

Now let $v\in\an_\GG(i)\setminus\pa_\GG(i)$.
Since $\pa_\GG(i)$ is the Markov blanket of $\omega_{I\triangle J}$ in $\GG^{\I\triangle\I}$, then $\langle v,\omega_{I\triangle J} \mid \pa_\GG(i) \rangle \in\J(\GG^{\I\triangle\I})$.
On the other hand, $\langle v,\omega_{I} \mid \pa_\GG(i) \rangle, \langle v,\omega_{J} \mid \pa_\GG(i) \rangle\notin\J(\GG^{\I})$.  
This is because $v$ is ancestor of both $i$ and $j$, and so conditioning on $\pa_\GG(i) = \pa_\GG(j)$ induces $d$-connecting paths between $v$ and $\omega_I$ and $v$ and $\omega_J$ in $\GG^\I$ (and $\GG^{\I\triangle\I}$, in fact).  
By applying the constructions in the proof of \cite[Lemma 4]{KJSB19} to these $d$-connecting paths we can produce an interventional setting $(f^{(I)})_{I\in\I}\in\M_\I(\GG)$ such that
\begin{equation*}
\begin{split}
f^{(\emptyset)}(X_v \mid X_{\pa_\GG(i)}) &\neq f^{(I)}(X_v \mid X_{\pa_\GG(i)}),\\
f^{(\emptyset)}(X_v \mid X_{\pa_\GG(i)}) &\neq f^{(J)}(X_v \mid X_{\pa_\GG(i)}). \\
\end{split}
\end{equation*}
Moreover, by appropriately choosing the error terms as described in the proof of \cite[Lemma 4]{KJSB19}, we can ensure
\[
f^{(I)}(X_v \mid X_{\pa_\GG(i)}) \neq f^{(I)}(X_v \mid X_{\pa_\GG(i)}). \mbox{\,\,\,\,\,\,\,\,\, \,\,}
\]
Hence, $(f^{(I)})_{I\in\I}\notin\M_\I^{(c)}(\GG)$, and we conclude that $\M_\I^{(c)}(\GG)\subsetneq\M_\I(\GG)$. 
\end{proof}

	\begin{figure}
	\centering

\begin{tikzpicture}[thick,scale=0.3]
	
	 \node[circle, draw, fill=black!0, inner sep=1pt, minimum width=1pt] (h1) at (0,6) {$1$};
	 \node[circle, draw, fill=black!0, inner sep=1pt, minimum width=1pt] (h2) at (0,3) {$2$};
	 \node[circle, draw, fill=black!0, inner sep=1pt, minimum width=1pt] (h3) at (-2,0) {$3$};
	 \node[circle, draw, fill=black!0, inner sep=1pt, minimum width=1pt] (h4) at (2,0) {$4$};

	 \node[circle, draw, fill=black!100, inner sep=1pt, minimum width=1pt] (hw14) at (-3,4) {};
	 \node[circle, draw, fill=black!100, inner sep=1pt, minimum width=1pt] (hw15) at (3,4) {};

	 \node[circle, draw, fill=black!0, inner sep=1pt, minimum width=1pt] (g1) at (15,6) {$1$};
	 \node[circle, draw, fill=black!0, inner sep=1pt, minimum width=1pt] (g2) at (15,3) {$2$};
	 \node[circle, draw, fill=black!0, inner sep=1pt, minimum width=1pt] (g3) at (13,0) {$3$};
	 \node[circle, draw, fill=black!0, inner sep=1pt, minimum width=1pt] (g4) at (17,0) {$4$};

	 \node[circle, draw, fill=black!100, inner sep=1pt, minimum width=1pt] (gw14) at (12,4) {};
	 \node[circle, draw, fill=black!100, inner sep=1pt, minimum width=1pt] (gw15) at (18,4) {};
	 \node[circle, draw, fill=black!100, inner sep=1pt, minimum width=1pt] (gw145) at (15,-3) {};
 
	 \draw[->]   (h1) -- (h2) ;
 	 \draw[->]   (h2) -- (h3) ;
 	 \draw[->]   (h2) -- (h4) ;
 	 \draw[->]   (hw14) -- (h2) ;
 	 \draw[->]   (hw14) -- (h3) ;
	 \draw[->]   (hw15) -- (h2) ;
 	 \draw[->]   (hw15) -- (h4) ;

	 \draw[->]   (g1) -- (g2) ;
 	 \draw[->]   (g2) -- (g3) ;
 	 \draw[->]   (g2) -- (g4) ;
 	 \draw[->]   (gw14) -- (g2) ;
 	 \draw[->]   (gw14) -- (g3) ;
	 \draw[->]   (gw15) -- (g2) ;
 	 \draw[->]   (gw15) -- (g4) ;
 	 \draw[->]   (gw145) -- (g3) ;
 	 \draw[->]   (gw145) -- (g4) ;
	 
 	 \node  at (-3.75,4.5) {$\omega_{I}$};
 	 \node  at (3.75,4.5) {$\omega_{J}$};
 	 \node  at (11.25,4.5) {$\omega_{I}$};
 	 \node  at (18.75,4.5) {$\omega_{J}$};
 	 \node  at (16.75,-3.5) {$\omega_{I\triangle J}$};
 	 \node  at (-3,-3.5) {$\GG^\I$};
 	 \node  at (11,-3.5) {$\GG^{\I\triangle\I}$};
\end{tikzpicture}
	\caption{A DAG $\GG$ and set of interventions $\I = \{\emptyset, \{2,3\},\{2,4\}\}$ for which the collection of interventional settings $\M_\I^{(c)}(\GG)$ satisfying the controlled $\I$-Markov property (Definition~\ref{def: causal I-Markov property}) is a strict subset of the collection of interventional settings $\M_\I(\GG)$ satisfying the $\I$-Markov property (Definition~\ref{def: ADMG local I-Markov properties} or \protect\cite[Definition 3.6]{YKU18}).}
	\label{fig: minimal example}
	\end{figure}
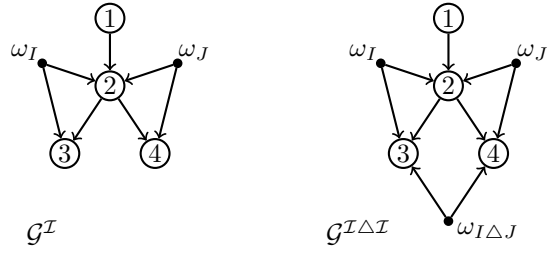

It is well-known that given an ADMG $\GG = ([p],E)$, a directed MAG $M_\GG = ([p],E)$ can be constructed that maintains the independence and ancestral relations amongst the variables $[p]$.  
In other words, one can choose a representative of the Markov equivalence class of $\GG$ that is a MAG with the same ancestral relations as $\GG$. 
\citet[Theorem 2]{KJSB19} gives a characterization of when two directed MAGs representing a causal graph are $\I$-Markov equivalent (with respect to the controlled $\I$-Markov property) under the assumption that the collection of interventions are controlled: 
\begin{theorem}
\cite[Theorem 2]{KJSB19}
\label{thm: kocaoglu characterization}
Two ADMGs $\GG$ and $\HH$ are $\I$-Markov equivalent for a set of controlled interventions $\I$ if and only if for the directed MAGs $M_\GG^{\I\triangle\I}$ and $M_\HH^{\I\triangle\I}$:
\begin{enumerate}
	\item $M_\GG^{\I\triangle\I}$ and $M_\HH^{\I\triangle\I}$ have the same skeleton,
	\item $M_\GG^{\I\triangle\I}$ and $M_\HH^{\I\triangle\I}$ have the same v-structures, and 
	\item If $\pi$ is a discriminating path for a node $i$ in both $M_\GG^{\I\triangle\I}$ and $M_\HH^{\I\triangle\I}$ then $i$ is a collider on $\pi$ in one graph if and only if it is a collider on the other.  
\end{enumerate}
\end{theorem}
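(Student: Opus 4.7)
The plan is to reduce controlled $\I$-Markov equivalence of ADMGs $\GG$ and $\HH$ to Markov equivalence of the directed MAGs $M_\GG^{\I\triangle\I}$ and $M_\HH^{\I\triangle\I}$, and then invoke the classical Ali--Richardson--Spirtes characterization \cite[Theorem~3.2]{ARS09} of MAG Markov equivalence, which directly yields conditions (1)--(3) of the theorem.

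The central step is to prove $\M_\I^{(c)}(\GG) = \M_\I^{(c)}(\HH)$ if and only if $\J(\GG^{\I\triangle\I}) = \J(\HH^{\I\triangle\I})$. For the reverse direction, property (1) of Definition~\ref{def: causal I-Markov property} requires each $f^{(I)}$ to be Markov to $\GG$, and restricting $m$-separation statements in $\GG^{\I\triangle\I}$ to $[p]$ recovers $\J(\GG)$; hence equality of the $\I\triangle\I$-independence models forces $\J(\GG) = \J(\HH)$. Property (2) of the controlled $\I$-Markov property is, by \cite[Proposition~1]{KJSB19}, exactly the statement that $f^{(I)}(X_B\mid X_C) = f^{(J)}(X_B\mid X_C)$ whenever $\langle B,\{\omega_{I\triangle J}\}\mid C\cup W_{\I\triangle\I}\setminus\{\omega_{I\triangle J}\}\rangle \in \J(\GG^{\I\triangle\I})$, so coincidence of these independence models forces the same invariances on both sides. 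The forward direction is harder: given a separation statement in $\J(\GG^{\I\triangle\I})\setminus\J(\HH^{\I\triangle\I})$, one must exhibit a controlled interventional setting in $\M_\I^{(c)}(\HH)\setminus\M_\I^{(c)}(\GG)$. I would adapt the linear-Gaussian construction of \cite[Lemma~4]{KJSB19}, choosing coefficients and error variances so that the single invariance dictated by the distinguishing separation is violated while all other invariances required by the $\I\triangle\I$-separations in $\HH^{\I\triangle\I}$ hold --- and, crucially, so that the equality constraints between $f^{(I)}$ and $f^{(J)}$ demanded by the controlled assumption are respected.

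Next, I would pass from $\GG^{\I\triangle\I}$ to its maximal ancestral completion $M_\GG^{\I\triangle\I}$. Adding bidirected edges along inducing paths preserves the formal independence model \cite[Theorem~5.1]{RS02}, so $\J(\GG^{\I\triangle\I}) = \J(M_\GG^{\I\triangle\I})$ and similarly for $\HH$; one must also check that $M_\GG^{\I\triangle\I}$ is a well-defined directed MAG, which uses the fact that each intervention node $\omega_K$ is a source, so it cannot create a directed cycle and, in the directed-AG setting, cannot create arrowheads into selection-adjacent nodes. Once controlled $\I$-Markov equivalence has been converted into Markov equivalence of two directed MAGs, \cite[Theorem~3.2]{ARS09} immediately translates this equivalence into the three combinatorial conditions on skeleton, v-structures, and colliders along mutually discriminating paths, giving (1)--(3).

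The main obstacle I anticipate is the forward direction of the central step: constructing a controlled interventional setting that realizes a prescribed maximal pattern of $\I\triangle\I$-invariances while still violating the single distinguishing one. Because the controlled assumption forces $f^{(I)}$ and $f^{(J)}$ to agree on the causal mechanisms of common targets (not merely on those for untargeted nodes, as in the uncontrolled $\I$-Markov property), the construction has strictly less parametric freedom than the analogous one in \cite{YKU18} for DAGs under general interventions. Ensuring one can still separate two $\I\triangle\I$-independence models via such a restricted family of distributions is the delicate point, and is the place where the argument genuinely depends on the controlled assumption.
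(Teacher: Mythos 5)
This statement is \cite[Theorem 2]{KJSB19}, quoted verbatim as an external result: the paper gives no proof of it, so there is no internal argument to compare yours against. What you have written is a reconstruction of the argument from \cite{KJSB19}, and its overall shape (reduce controlled $\I$-Markov equivalence to Markov equivalence of the augmented MAGs, then apply a known graphical characterization) does mirror both the original proof and the paper's own proof of the analogous Theorem~\ref{thm: coincide} for the uncontrolled property.

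That said, several steps of your sketch are off as written. First, conditions (1)--(3) are the discriminating-path characterization of \cite{SR97}, not \cite[Theorem 3.2]{ARS09}; the latter is the ``same adjacencies and colliders with order'' criterion, so it does not ``directly yield'' the three listed conditions. Second, $M_\GG^{\I\triangle\I}$ is not the maximal ancestral completion of $\GG^{\I\triangle\I}$; it is $(M_\GG)^{\I\triangle\I}$, where $M_\GG$ is a directed MAG chosen to represent the (possibly non-ancestral) ADMG $\GG$ while preserving its independence model and ancestral relations. For a general ADMG this is the canonical-MAG construction, not \cite[Theorem~5.1]{RS02}, and one must separately argue that forming the MAG and adjoining intervention nodes commute (the content of Proposition~\ref{prop: commuting} in the ancestral case). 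Third, ``restricting $m$-separation statements in $\GG^{\I\triangle\I}$ to $[p]$ recovers $\J(\GG)$'' is false as stated: a fork $i\leftarrow\omega_K\rightarrow j$ is $m$-connecting when $\omega_K$ is not conditioned on, so the correspondence is $\langle A,B\mid C\rangle\in\J(\GG)$ iff $\langle A,B\mid C\cup W_{\I\triangle\I}\rangle\in\J(\GG^{\I\triangle\I})$; sorting separation statements into this type and the type involving a single $\omega$ node is exactly the role of \cite[Lemma 5]{KJSB19} in the paper's proof of Theorem~\ref{thm: coincide}. Finally, the forward (hard) direction of your central step is deferred wholesale to \cite[Lemma 4]{KJSB19}, which is circular if the aim is an independent proof of the KJSB19 theorem; as you correctly flag, the controlled assumption removes parametric freedom relative to the construction in \cite{YKU18}, and verifying that the distinguishing invariance can still be violated while all controlled equalities are respected is precisely the part that cannot be waved through.
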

It then follows from Theorem~\ref{thm: main} and the characterization of Markov equivalence for MAGs given by \cite{SR97} that two directed AGs $\GG$ and $\HH$ satisfy $\M_\I^{(c)}(\GG) = \M_\I^{(c)}(\HH)$ for a set of controlled interventions $\I$ if and only if they are $\I\triangle\I$-Markov equivalent according to Definition~\ref{def: LMG I-Markov equivalence}.  
Hence, when we are willing to assume controlled interventions, Theorem~\ref{thm: kocaoglu characterization} implies that we can partition the $\I$-Markov equivalence classes given by Theorem~\ref{thm: coincide} into finer $\I\triangle\I$-Markov equivalence classes.  
This, of course makes intuitive sense, since Theorem~\ref{thm: kocaoglu characterization} assumes the interventions are controlled, and hence imposes more invariances than needed in Theorem~\ref{thm: coincide}.  
An example of this refinement is given in Figure~\ref{fig: refined}.
	\begin{figure}
	\centering

\begin{tikzpicture}[thick,scale=0.3]
	
	 \node[circle, draw, fill=black!0, inner sep=1pt, minimum width=1pt] (h1) at (0,0) {$1$};
	 \node[circle, draw, fill=black!0, inner sep=1pt, minimum width=1pt] (h2) at (3,0) {$2$};
	 \node[circle, draw, fill=black!0, inner sep=1pt, minimum width=1pt] (h3) at (5,3) {$3$};

	 \node[circle, draw, fill=black!100, inner sep=1pt, minimum width=1pt] (hwI) at (1,4) {};
	 \node[circle, draw, fill=black!100, inner sep=1pt, minimum width=1pt] (hwJ) at (6,5.5) {};

	 \node[circle, draw, fill=black!0, inner sep=1pt, minimum width=1pt] (g1) at (13,0) {$1$};
	 \node[circle, draw, fill=black!0, inner sep=1pt, minimum width=1pt] (g2) at (16,0) {$2$};
	 \node[circle, draw, fill=black!0, inner sep=1pt, minimum width=1pt] (g3) at (18,3) {$3$};

	 \node[circle, draw, fill=black!100, inner sep=1pt, minimum width=1pt] (gwI) at (14,4) {};
	 \node[circle, draw, fill=black!100, inner sep=1pt, minimum width=1pt] (gwJ) at (19,5.5) {};

	 \node[circle, draw, fill=black!0, inner sep=1pt, minimum width=1pt] (t1) at (0,-8) {$1$};
	 \node[circle, draw, fill=black!0, inner sep=1pt, minimum width=1pt] (t2) at (3,-8) {$2$};
	 \node[circle, draw, fill=black!0, inner sep=1pt, minimum width=1pt] (t3) at (5,-5) {$3$};

	 \node[circle, draw, fill=black!100, inner sep=1pt, minimum width=1pt] (twI) at (1,-4) {};
	 \node[circle, draw, fill=black!100, inner sep=1pt, minimum width=1pt] (twJ) at (6,-2.5) {};
	 \node[circle, draw, fill=black!100, inner sep=1pt, minimum width=1pt] (twIJ) at (-3,-7.8) {};

	 \node[circle, draw, fill=black!0, inner sep=1pt, minimum width=1pt] (r1) at (13,-8) {$1$};
	 \node[circle, draw, fill=black!0, inner sep=1pt, minimum width=1pt] (r2) at (16,-8) {$2$};
	 \node[circle, draw, fill=black!0, inner sep=1pt, minimum width=1pt] (r3) at (18,-5) {$3$};

	 \node[circle, draw, fill=black!100, inner sep=1pt, minimum width=1pt] (rwI) at (14,-4) {};
	 \node[circle, draw, fill=black!100, inner sep=1pt, minimum width=1pt] (rwJ) at (19,-2.5) {};
 	 \node[circle, draw, fill=black!100, inner sep=1pt, minimum width=1pt] (rwIJ) at (10,-7.8) {};

	 \draw[<-]   (h1) -- (h2) ;
 	 \draw[->]   (h2) -- (h3) ;
 	 \draw[->]   (h1) -- (h3) ;
 	 \draw[->]   (hwI) -- (h1) ;
 	 \draw[->]   (hwI) -- (h3) ;
	 \draw[->]   (hwJ) -- (h3) ;

	 \draw[<->]   (g1) -- (g2) ;
 	 \draw[<->]   (g2) -- (g3) ;
 	 \draw[->]   (g1) -- (g3) ;
 	 \draw[->]   (gwI) -- (g1) ;
 	 \draw[->]   (gwI) -- (g3) ;
	 \draw[->]   (gwJ) -- (g3) ;

	 \draw[<-]   (t1) -- (t2) ;
 	 \draw[->]   (t2) -- (t3) ;
 	 \draw[->]   (t1) -- (t3) ;
 	 \draw[->]   (twI) -- (t1) ;
 	 \draw[->]   (twI) -- (t3) ;
	 \draw[->]   (twJ) -- (t3) ;
 	 \draw[->]   (twIJ) -- (t1) ;

	 \draw[<->]   (r1) -- (r2) ;
 	 \draw[<->]   (r2) -- (r3) ;
 	 \draw[->]   (r1) -- (r3) ;
 	 \draw[->]   (rwI) -- (r1) ;
 	 \draw[->]   (rwI) -- (r3) ;
	 \draw[->]   (rwJ) -- (r3) ;
 	 \draw[->]   (rwIJ) -- (r1) ;
	 
 	 \node  at (.1,3.75) {$\omega_{I}$};
 	 \node  at (6.75,6) {$\omega_{J}$};
 	 \node  at (13.1,3.75) {$\omega_{I}$};
 	 \node  at (19.75,6) {$\omega_{J}$};
 	 \node  at (.1,-4.25) {$\omega_{I}$};
 	 \node  at (6.75,-2) {$\omega_{J}$};
 	 \node  at (13.1,-4.25) {$\omega_{I}$};
 	 \node  at (19.75,-2) {$\omega_{J}$};
 	 \node  at (-4,-8.5) {$\omega_{I\triangle J}$};
 	 \node  at (9,-8.5) {$\omega_{I\triangle J}$};
\end{tikzpicture}
	\vspace{-0.2cm}
	\caption{The two MAGs are in the same $\I$-MEC for $\I = \{\emptyset, I = \{1,3\}, J = \{3\}\}$, but they are in distinct $\I\triangle\I$-MECs under the controlled intervention assumption.  This is because $\I\triangle\I =\{\emptyset, I, J, I\triangle J = \{1\}\}$, and the node $\omega_{I\triangle J}$ induces a new $\I$-collider with order $\langle 1,2,3\rangle$ in precisely one graph.}
	\label{fig: refined}
	\end{figure}

\section{Discussion}
\label{sec: discussion}
Here, we generalized the notion of $\I$-Markov equivalence to all formal independence models associated to loopless mixed graphs, and we gave a combinatorial characterization of this equivalence for ancestral graphs.  
We also generalized the interventional settings $\M_\I(\GG)$ for DAGs to ADMGs via their factorization criterion, and characterized when $\M_\I(\GG) = \M_\I(\HH)$.  
When the ADMG is also an AG, and we have access to the observed distribution, we showed that this equality holds precisely when $\GG$ and $\HH$ are $\I$-Markov equivalent from the perspective of formal independence models.  
This yields a theory for modeling interventional Markov equivalence in the presence of latent confounders via directed AGs that is applicable even when the interventions are uncontrolled.  
While the work here assumes access to the observational distribution, a natural follow-up project is to extend it to the purely interventional setting and then apply this theory to causal discovery algorithms that use observational and interventional data.  
It would also be of interest to extend this work to other families of LMGs, such as cyclic graphs. 

	\begin{figure}[b!]
	\centering

\begin{tikzpicture}[thick,scale=0.45]
	
	 \node[circle, draw, fill=black!0, inner sep=1pt, minimum width=1pt] (h1) at (0,0) {$1$};
	 \node[circle, draw, fill=black!0, inner sep=1pt, minimum width=1pt] (h2) at (3,0) {$2$};
	 \node[circle, draw, fill=black!0, inner sep=1pt, minimum width=1pt] (h3) at (3,3) {$3$};
	 \node[circle, draw, fill=black!0, inner sep=1pt, minimum width=1pt] (h4) at (0,3) {$4$};

	 \node[circle, draw, fill=black!0, inner sep=1pt, minimum width=1pt] (g1) at (10,0) {$1$};
	 \node[circle, draw, fill=black!0, inner sep=1pt, minimum width=1pt] (g2) at (13,0) {$2$};
	 \node[circle, draw, fill=black!0, inner sep=1pt, minimum width=1pt] (g3) at (13,3) {$3$};
	 \node[circle, draw, fill=black!0, inner sep=1pt, minimum width=1pt] (g4) at (10,3) {$4$};
 
	 \draw[<->]   (h1) -- (h2) ;
 	 \draw[<->]   (h2) -- (h3) ;
 	 \draw[<->]   (h3) -- (h4) ;
 	 \draw[->]   (h2) -- (h4) ;
 	 \draw[->]   (h3) -- (h1) ;

	 \draw[<->]   (g1) -- (g2) ;
 	 \draw[<->]   (g2) -- (g3) ;
 	 \draw[<->]   (g3) -- (g4) ;
 	 \draw[<->]   (g1) -- (g4) ;
 	 \draw[->]   (g2) -- (g4) ;
 	 \draw[->]   (g3) -- (g1) ;
	 
 	 \node  at (1.5,-1.5) {$\GG$};
 	 \node  at (11.5,-1.5) {$\overline{\GG}$};
\end{tikzpicture}
	\caption{An ancestral graph $\GG$ and the unique MAG containing it.}
	\label{fig: hard}
	\end{figure}

It is further interesting to note that, while the methods presented here and in \cite{KJSB19,YKU18} develop a theory for refining Markov equivalence classes by way of general interventions, in some cases, these methods are weaker than the methods applicable only to perfect interventions.  
For instance, if one wishes to distinguish between a given ancestral graph and the unique (Markov equivalent) MAG containing it, we cannot use any of the methods developed for general interventions.
However, we can use those existing only for perfect interventions.  
For example, the graph on the left-hand-side of Figure~\ref{fig: hard} is an ancestral graph for which the graph on the right-hand-side is the unique MAG containing it.  

In Figure~\ref{fig: hard intervened} we see on the top row that, using the methods for model interventions via $\I$-graphs studied here and in \cite{KJSB19,YKU18}, we cannot distinguish the two graphs.
This is because their $\I$-graphs have the same $\I$-colliders with order.  
On the other hand, when modeling a perfect intervention, we remove all arrows pointing into the targeted node.  
In the bottom line, of Figure~\ref{fig: hard intervened}, we see that by removing the arrows corresponding to the same intervention (assumed now to be perfect), we recover two ancestral graphs with different colliders with order.  
Hence, the methods for modeling perfect interventions can further distinguish graphs in the same Markov equivalence class.  
A careful investigation of what methods can be used for perfect and general interventions when attempting to distinguish between different nonmaximal elements of a Markov equivalence class would be of interest.  
	\begin{figure}
	\centering

\begin{tikzpicture}[thick,scale=0.45]
	
	 \node[circle, draw, fill=black!0, inner sep=1pt, minimum width=1pt] (h1) at (0,0) {$1$};
	 \node[circle, draw, fill=black!0, inner sep=1pt, minimum width=1pt] (h2) at (3,0) {$2$};
	 \node[circle, draw, fill=black!0, inner sep=1pt, minimum width=1pt] (h3) at (3,3) {$3$};
	 \node[circle, draw, fill=black!0, inner sep=1pt, minimum width=1pt] (h4) at (0,3) {$4$};

	 \node[circle, draw, fill=black!100, inner sep=1pt, minimum width=1pt] (hw14) at (5,.4) {};

	 \node[circle, draw, fill=black!0, inner sep=1pt, minimum width=1pt] (g1) at (13,0) {$1$};
	 \node[circle, draw, fill=black!0, inner sep=1pt, minimum width=1pt] (g2) at (16,0) {$2$};
	 \node[circle, draw, fill=black!0, inner sep=1pt, minimum width=1pt] (g3) at (16,3) {$3$};
	 \node[circle, draw, fill=black!0, inner sep=1pt, minimum width=1pt] (g4) at (13,3) {$4$};

	 \node[circle, draw, fill=black!100, inner sep=1pt, minimum width=1pt] (gw14) at (18,.4) {};

	 \node[circle, draw, fill=black!0, inner sep=1pt, minimum width=1pt] (r1) at (0,-8) {$1$};
	 \node[circle, draw, fill=black!0, inner sep=1pt, minimum width=1pt] (r2) at (3,-8) {$2$};
	 \node[circle, draw, fill=black!0, inner sep=1pt, minimum width=1pt] (r3) at (3,-5) {$3$};
	 \node[circle, draw, fill=black!0, inner sep=1pt, minimum width=1pt] (r4) at (0,-5) {$4$};

	 \node[circle, draw, fill=black!0, inner sep=1pt, minimum width=1pt] (t1) at (13,-8) {$1$};
	 \node[circle, draw, fill=black!0, inner sep=1pt, minimum width=1pt] (t2) at (16,-8) {$2$};
	 \node[circle, draw, fill=black!0, inner sep=1pt, minimum width=1pt] (t3) at (16,-5) {$3$};
	 \node[circle, draw, fill=black!0, inner sep=1pt, minimum width=1pt] (t4) at (13,-5) {$4$};

	 \draw[<->]   (h1) -- (h2) ;
 	 \draw[<->]   (h2) -- (h3) ;
 	 \draw[<->]   (h3) -- (h4) ;
 	 \draw[->]     (h2) -- (h4) ;
 	 \draw[->]     (h3) -- (h1) ;
 	 \draw[->]     (hw14) -- (h2) ;

	 \draw[<->]   (g1) -- (g2) ;
 	 \draw[<->]   (g2) -- (g3) ;
 	 \draw[<->]   (g3) -- (g4) ;
 	 \draw[<->]   (g1) -- (g4) ;
 	 \draw[->]     (g2) -- (g4) ;
 	 \draw[->]     (g3) -- (g1) ;
 	 \draw[->]     (gw14) -- (g2) ;

 	 \draw[<->]   (r3) -- (r4) ;
 	 \draw[->]     (r2) -- (r4) ;
 	 \draw[->]     (r3) -- (r1) ;

 	 \draw[<->]   (t3) -- (t4) ;
 	 \draw[<->]   (t1) -- (t4) ;
 	 \draw[->]     (t2) -- (t4) ;
 	 \draw[->]     (t3) -- (t1) ;
	 
 	 \node  at (6,0) {$\omega_{\{2\}}$};
 	 \node  at (19,0) {$\omega_{\{2\}}$};
 	 \node  at (1.5,-1.5) {$\GG^\I$};
 	 \node  at (14.5,-1.5) {$\overline{\GG}^{\I}$};
\end{tikzpicture}
	\caption{By intervening in the graphs in Figure~\ref{fig: hard} with the intervention target $\I = \{\{2\}\}$, we see that the methods applicable to general interventions (top row) yield two AGs with $\J(\GG^\I) = \J(\overline{\GG}^\I)$, and hence the graphs $\GG$ and $\overline{\GG}$ are $\I$-Markov equivalent under general interventions.  On the other hand, if we assume the interventions are perfect, we can remove the arrows pointing into node $2$, resulting in two graphs with different global Markov properties.  Hence, under perfect interventions, $\GG$ and $\overline{\GG}$ are not $\I$-Markov equivalent.}
	\label{fig: hard intervened}
	\end{figure}

\subsection*{Acknowledgements}
Liam Solus was supported by a Starting Grant (No. 2019-05195) from Vetenskapsr\aa{}det, and the Wallenberg Autonomous Systems and Software Program (WASP). 



\bibliography{interventions}

\begin{thebibliography}{27}
\providecommand{\natexlab}[1]{#1}
\providecommand{\url}[1]{\texttt{#1}}
\expandafter\ifx\csname urlstyle\endcsname\relax
  \providecommand{\doi}[1]{doi: #1}\else
  \providecommand{\doi}{doi: \begingroup \urlstyle{rm}\Url}\fi

\bibitem[Ali et~al.(2009)Ali, Richardson, and Spirtes]{ARS09}
A.~R. Ali, T.~S. Richardson, and P.~Spirtes.
\newblock Markov equivalence for ancestral graphs.
\newblock \emph{The Annals of Statistics}, 37\penalty0 (5B):\penalty0
  2808--2837, 2009.

\bibitem[Andersson et~al.(1997)Andersson, Madigan, and Perlman]{AMP97}
S.~A. Andersson, D.~Madigan, and M.~D. Perlman.
\newblock A characterization of markov equivalence classes for acyclic
  digraphs.
\newblock \emph{The Annals of Statistics}, 25\penalty0 (2):\penalty0 505--541,
  1997.

\bibitem[Correa and Bareinboim(2019)]{CB19}
J.~Correa and E.~Bareinboim.
\newblock A calculus for stochastic interventions: Causal effect identification
  and surrogate experiments.
\newblock \emph{Technical report, R-51, Causal Artificial Intelligence
  (CausalAI) Lab, Columbia University, New York}, 2019.

\bibitem[Dixit et~al.(2016)Dixit, Parnas, Li, Chen, Fulco, Jerby-Arnon,
  Marjanovic, Dionne, Burks, Raychowdhury, and Adamson]{Dixit16}
A.~Dixit, O.~Parnas, B.~Li, J.~Chen, C.~P. Fulco, L.~Jerby-Arnon, N.~D.
  Marjanovic, D.~Dionne, T.~Burks, R.~Raychowdhury, and B.~Adamson.
\newblock Perturb-seq: dissecting molecular circuits with scalable single-cell
  rna profiling of pooled genetic screens.
\newblock \emph{Cell}, 167\penalty0 (7):\penalty0 1853--1866, 2016.

\bibitem[Eberhardt et~al.(2005)Eberhardt, Glymour, and Scheines]{EGS05}
F.~Eberhardt, C.~Glymour, and R.~Scheines.
\newblock On the number of experiments sufficient and in the worst case
  necessary to identify all causal relations among n variables.
\newblock \emph{Uncertainty in Artificial Intelligence}, pages 178--184, 2005.

\bibitem[Gillispie and Perlman(2001)]{GP01}
S.~B. Gillispie and M.~D. Perlman.
\newblock Enumerating markov equivalence classes of acyclic digraphs.
\newblock \emph{Uncertainty in Artificial Intelligence}, pages 171--177, 2001.

\bibitem[Hauser and Buhlmann(2012)]{HB12}
A.~Hauser and P.~Buhlmann.
\newblock Characterization and greedy learning of interventional markov
  equivalence classes of directed acyclic graphs.
\newblock \emph{Journal of Machine Learning Research}, 77\penalty0
  (1):\penalty0 291--318, 2012.

\bibitem[Hauser and Buhlmann(2015)]{HB15}
A.~Hauser and P.~Buhlmann.
\newblock Jointly interventional and observational data: estimation of
  interventional markov equivalence classes of directed acyclic graphs.
\newblock \emph{J. R. Statist. Soc. B}, 13\penalty0 (Aug):\penalty0 2409--2464,
  2015.

\bibitem[Kocaoglu et~al.(2019)Kocaoglu, Jaber, Shanmugam, and
  Barenboim]{KJSB19}
M.~Kocaoglu, A.~Jaber, K.~Shanmugam, and E.~Barenboim.
\newblock Characterization and learning of causal graphs with latent variables
  from soft interventions.
\newblock \emph{The proceedings of the 32-nd Conference on Advances in Neural
  Information Processing Systems (NeurIPS)}, 2019.

\bibitem[Mooij et~al.(2019)Mooij, Magliacane, and Claassen]{MCM16}
J.~M. Mooij, S.~Magliacane, and T.~Claassen.
\newblock Joint causal inference from multiple contexts.
\newblock \emph{arXiv: 1611.10351v4}, 2019.

\bibitem[Pearl(2000)]{P00}
J.~Pearl.
\newblock Causality: Models, reasoning, and inference.
\newblock \emph{Cambridge University Press, Cambridge}, 2000.

\bibitem[Richardson(2009)]{R09}
T.~Richardson.
\newblock A factorization criterion for acyclic directed mixed graphs.
\newblock \emph{The proceedings of the 25-th Conference on Uncertainty in
  Artificial Intelligence}, 2009.

\bibitem[Richardson and Spirtes(2002)]{RS02}
T.~Richardson and P.~Spirtes.
\newblock Ancestral graph markov models.
\newblock \emph{The Annals of Statistics}, 30\penalty0 (4):\penalty0 962--1030,
  2002.

\bibitem[Richardson et~al.(2017)Richardson, Evans, Robins, and
  Shpitser]{RERS17}
T.~S. Richardson, R.~J. Evans, J.~M. Robins, and I.~Shpitser.
\newblock Nested markov properties for acyclic directed mixed graphs.
\newblock 2017.
\newblock arXiv preprint arXiv:1701.06686.

\bibitem[Robins et~al.(2000)Robins, Hernan, and Brumback]{RHB00}
J.~M. Robins, M.~A. Hernan, and B.~Brumback.
\newblock Marginal structural models and causal inference in epidemiology.
\newblock \emph{Epidemiology}, 2000.

\bibitem[Sachs et~al.(2005)Sachs, Perez, Pe'er, Lauffenburger, and
  Nolan]{Sachs05}
K.~Sachs, O.~Perez, D.~Pe'er, D.~A. Lauffenburger, and G.~P. Nolan.
\newblock Causal protein-signaling networks derived from multiparameter
  single-cell data.
\newblock \emph{Science}, 2005.

\bibitem[Sadeghi(2012)]{S12}
K.~Sadeghi.
\newblock Graphical representation of independence structures.
\newblock \emph{Ph.D. thesis, University of Oxford}, 2012.

\bibitem[Sadeghi(2013)]{S13}
K.~Sadeghi.
\newblock Stable mixed graphs.
\newblock \emph{Bernoulli}, 19:\penalty0 2330?2358, 2013.

\bibitem[Sadeghi and Lauritzen(2014)]{SL14}
K.~Sadeghi and S.~Lauritzen.
\newblock Markov properties for mixed graphs.
\newblock \emph{Bernoulli}, 20\penalty0 (2):\penalty0 676--696, 2014.

\bibitem[Spirtes and Richardson(1997)]{SR97}
P.~Spirtes and T.~Richardson.
\newblock A polynomial time algorithm for determining dag equivalence in the
  presence of latent variables and selection bias, 1997.

\bibitem[Spirtes et~al.(2001)Spirtes, Glymour, and Scheines]{SGS01}
P.~Spirtes, C.~N. Glymour, and R.~Scheines.
\newblock Causation, prediction, and search.
\newblock \emph{MIT Press, Cambridge}, 2001.

\bibitem[Tian and Pearl(2002)]{TP02}
J.~Tian and J.~Pearl.
\newblock A general identification condition for causal effects.
\newblock \emph{AAAI/IAAI}, pages 567--573, 2002.

\bibitem[Verma(1993)]{V93}
T.~Verma.
\newblock Graphical aspects of causal models.
\newblock \emph{Technical Report R-191, UCLA}, 1993.

\bibitem[Verma and Pearl(1991)]{VP91}
T.~Verma and J.~Pearl.
\newblock Equivalence and synthesis of causal models.
\newblock \emph{Uncertainty in Artificial Intelligence}, 1991.

\bibitem[Wang et~al.(2017)Wang, Solus, Yang, and Uhler]{WSKU17}
Y.~Wang, L.~Solus, K.~D. Yang, and C.~Uhler.
\newblock Permutation-based causal inference algorithms with interventions.
\newblock \emph{Advances in Neural Information Processing Systems}, 2017.

\bibitem[Yang et~al.(2018)Yang, Katcoff, and Uhler]{YKU18}
K.~D. Yang, A.~Katcoff, and C.~Uhler.
\newblock Characterizing and learning equivalence classes of causal dags under
  interventions.
\newblock \emph{Proceedings of the 35-th International Conference on Machine
  Learning}, 2018.

\bibitem[Zhao et~al.(2005)Zhao, Zheng, and Liu]{ZZL05}
H.~Zhao, Z.~Zheng, and B.~Liu.
\newblock On the markov equivalence of maximal ancestral graphs.
\newblock \emph{Science in China Series A: Mathematics}, 48\penalty0
  (4):\penalty0 548--562, 2005.

\end{thebibliography}
\bibliographystyle{abbrvnat}

\appendix

\section{Preliminary Definitions and Results on LMGs} 

\subsection{Graph Theory Definitions} 
The following is a complete dictionary of the graph theoretical terms used in the manuscript.  
A \emph{mixed graph} is a graph $\GG = ([p],E)$ in which the set of edges $E$ contains a mixture of \emph{undirected} $i-j$, \emph{bidirected} $i \leftrightarrow j$, and \emph{directed} $i\rightarrow j$ edges.  
A mixed graph $\GG$ is called \emph{simple} if there exists at most one edge of any type between each pair of nodes in $\GG$. 
For each edge-type, we call $i$ and $j$ the \emph{endpoints} of the edge, and we say $i$ and $j$ are \emph{adjacent} $\GG$ if they are the two endpoints of any single edge in $\GG$. 
We call $i\in[p]$ a \emph{source} if all edges with endpoint $i$ have no arrowhead at $i$, and we call $i$ a \emph{sink} if every edge with endpoint $i$ has an arrowhead at $i$. 
If it is ambiguous as to whether or not there is an arrowhead at the endpoint $i$ of an edge $e$, we will place an empty circle at the endpoint $i$; for example $i \qrightarrow j$.  
If $A\subset [p]$, the \emph{induced subgraph of $\GG$ on $A$}, denoted $\GG\langle A \rangle$, is the graph with node set $A$ and all edges in $E$ which have both endpoints in $A$.  
A \emph{loopless mixed graph} (LMG) is a mixed graph that does not contain any \emph{loops}; i.e., edges which have both endpoints being the same node.  
For an LMG $\GG = ([p],E)$, and $i\in[p]$, we say that $j\in[p]$ is a \emph{parent} of $i$ in $\GG$ if $j\rightarrow i\in E$, we say $j$ is a \emph{spouse} of $i$ if $i\leftrightarrow j\in E$, we say $j$ is a \emph{child} of $i$ if $i\rightarrow j\in E$, and we let $\pa_\GG(i)$, $\sp_\GG(i)$, and $\de_\GG(i)$ denote the parents, spouses, and children of $i$ in $\GG$, respectively.  
A \emph{path} in $\GG$ is a sequence $\pi = \langle v_1,e_1,v_2,e_2,\ldots,v_{m-1},e_{m-1},v_m\rangle$ where $v_k\in [p]$ and $e_k \in E$ for all $k$ and $e_k$ has endpoints $v_k$ and $v_{k-1}$.  
We call $v_1$ and $v_m$ the \emph{endpoints} of $\pi$.  
If $A\subset[p]$ such that for each $a,b\in A$ there exists some path $\pi$ in $\GG$ between $a$ and $b$ using only vertices in $A$, then $A$ (or the induced subgraph $\GG\langle A\rangle$) is called \emph{path-connected} in $\GG$.  
For a path $\pi$, if $e\in E$ is an edge with endpoints $v_1$ and $v_m$ such that $e$ has an arrowhead at the endpoint $e_1$ if and only if $e_1$ has an arrowhead at $v_1$, and similarly for $e_{m-1}$ and $v_m$, then we call $e$ an \emph{endpoint-identical edge} for $\pi$.  
Any subsequence of $\pi$ that is also a path is called a \emph{subpath} of $\pi$.  
A path $\pi$ is called \emph{directed} if $e_k = v_k\rightarrow v_{k+1}$ for all $k$, and it is called \emph{anterior} if there exists $t\in [m]$ such that $e_k = v_k - v_{k+1}$ for all $k \leq t$ and $e_k = v_k \rightarrow v_{k+1}$ for all $k > t$.
A \emph{cycle} is a sequence $\pi = \langle v_1,e_1,v_2,e_2,\ldots,v_{m-1},e_{m-1},v_m\rangle$ where $v_k\in [p]$ and $e_k \in E$ for all $k$, $e_k$ has endpoints $v_k$ and $v_{k-1}$, and $v_m = v_1$.  
It is called \emph{directed} if $e_k = v_k\rightarrow v_{k+1}$ for all $k$. 
When the edges in a path or cycle $\pi$ are understood, we will simply write $\pi$ as the sequence of vertices $\pi = \langle v_1,v_2,\ldots,v_m\rangle$.
A node $j\in[p]$ is an \emph{ancestor} of $i\in[p]$ in $\GG$ if there exists a directed path $\pi$ in $\GG$ with $v_1 = j$ and $v_m = i$. 
In this case $i$ is called a \emph{descendant} of $j$ in $\GG$.  
Similarly, $j$ is \emph{anterior} to $i$ if the path $\pi$ is an anterior path.  
We let $\an_\GG(i)$, $\de_\GG(i)$, and $\ant_\GG(i)$ denote the collection of nodes in $\GG$ that are ancestors, descendants, or anterior to $i$, respectively.

A vertex $v$ on a path $\pi = \langle v_1,v_2,\ldots,v_m\rangle$ that is not an endpoint of $\pi$ is called a \emph{collider} if two arrowheads point to $v$ on $\pi$.  
A \emph{collider path} is a path on which every vertex is a collider except for its endpoints.  
A collider path on three nodes $\langle v_1,v_2,v_3\rangle$ is called a \emph{v-structure} (or an \emph{unshielded collider} or an \emph{immorality}) \cite{ARS09,GP01} if $v_1$ and $v_3$ are not adjacent in $\GG$.  

An LMG $\GG$ is called \emph{maximal} if adding any edge to $\GG$ changes the collection $\J(\GG)$ of m-separation statements.  
To identify the unique maximal graph containing an ancestral graph (AG), we consider its inducing paths: an \emph{inducing path} $\pi$ between vertices $i$ and $j$ of an AG $\GG$ is a path on which any non-endpoint vertex is both a collider on $\pi$ and an ancestor of at least one of $i$ or $j$ in $\GG$.  
When discussing AGs, we will also utilize the notion of discriminating paths, whose existence in a pair of MAGs implies that a certain collider path $\langle i,j,k\rangle$ will be the same in both graphs, even though $i$ and $k$ are adjacent.  
A path $\pi = \langle v_0,v_1,\ldots,v_m,j,k\rangle$ is a \emph{discriminating path} for $\langle v_m,j,k\rangle$ in a MAG $\GG$ if $v_0$ is not adjacent to $k$, and for all $i\in[m]$, the vertex $v_i$ is a collider on $\pi$ and a parent of $k$.  
Using discriminating paths, we can also define \emph{triples with order}.  
A triple $\langle i,j,k\rangle$ has \emph{order $0$} if $i$ and $k$ are not adjacent.  
For $t\geq0$, if we let $\mathfrak{D}_t$ denote the set of all \emph{triples of order $t$} in a given MAG $\GG$, then $\langle i,j,k\rangle\in\mathfrak{D}_{t+1}$ if it is not a triple with order $s<t+1$, and there is a discriminating path $\langle v_0,v_1,\ldots,v_m,b,c\rangle$ with either $\langle i,j,k\rangle = \langle v_m,b,c\rangle$ or $\langle k,j,i\rangle = \langle v_m,b,c\rangle$ such that 
\[
\langle v_0,v_1,v_2\rangle, \langle v_1,v_2,v_3\rangle, \ldots, \langle v_{m-1},v_m,b\rangle\in\bigcup_{s\leq t}\mathfrak{D}_s.
\]
A graph $\GG$ is called \emph{anterior} if $\GG$ contains no triples of the form $i \qrightarrow j - k$.  
Given an LMG $\GG$, the \emph{anterior graph of $\GG$}, denoted $\GG^\ast$, is the graph produced by consecutively removing all arrowheads pointing into nodes adjacent to an undirected edge; i.e. $i \leftrightarrow j - k$ becomes $i \leftarrow j - k$ and $i \rightarrow j - k$ becomes $i - j - k$. 
We will use the following lemma from \cite{SL14} on the anterior graphs of ribbonless graphs (RGs):  
\begin{lemma}\cite[Proposition 1]{SL14}
\label{lem: RG anterior graph}
If $\GG$ is an RG then 
$
\J(\GG) = \J(\GG^\ast).
$
\end{lemma}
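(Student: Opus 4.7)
The plan is to prove the two inclusions $\J(\GG) \subseteq \J(\GG^\ast)$ and $\J(\GG^\ast) \subseteq \J(\GG)$ separately by lifting m-connecting paths between the two graphs, using the ribbonless hypothesis on $\GG$ to supply shortcut edges whenever the anterior transformation alters the collider structure of a path.

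First I would establish the path correspondence. The transformation $\GG \mapsto \GG^\ast$ never adds or removes edges; it only replaces arrowheads by tails, iteratively, at nodes adjacent to undirected edges. Consequently $\GG$ and $\GG^\ast$ share a common skeleton, and every path $\pi = \langle v_1,\dots,v_m \rangle$ in $\GG$ corresponds to a path $\pi^\ast$ in $\GG^\ast$ along the same vertex sequence. Tails are never erased, so a non-collider of $\pi$ remains a non-collider of $\pi^\ast$; dually, a $\GG$-collider $j$ becomes a $\GG^\ast$-non-collider only when some arrowhead at $j$ is removed. Tracing the iteration shows that this happens precisely when $j$ or a descendant of $j$ is an endpoint of an undirected edge of $\GG$---equivalently, when condition (2) of the ribbon definition is satisfied at $j$.

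To prove $\J(\GG^\ast) \subseteq \J(\GG)$, I would take a shortest m-connecting path $\pi$ in $\GG$ given $C$ and argue that $\pi^\ast$ is m-connecting in $\GG^\ast$ given $C$. The non-collider condition transfers by the observation above. If some $\GG$-collider $j$ of $\pi$, lying on a subpath $\langle i,j,k \rangle$, were to become a non-collider in $\GG^\ast$, then condition (2) of the ribbon definition would hold for $\langle i,j,k \rangle$. Since $\GG$ is ribbonless, condition (1) must then fail, yielding an endpoint-identical edge between $i$ and $k$ in $\GG$; substituting this edge into $\pi$ produces a strictly shorter m-connecting path in $\GG$ given $C$, contradicting minimality. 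The reverse inclusion $\J(\GG) \subseteq \J(\GG^\ast)$ is handled dually, starting from a shortest m-connecting path $\pi^\ast$ in $\GG^\ast$ and lifting it to $\GG$: any new collider in the lift again sits at a node satisfying condition (2), and the same ribbonless shortcut produces a strictly shorter m-connecting path in $\GG^\ast$.

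The main obstacle will be reconciling the collider-activation condition $v_2 \in C \cup \an_\GG(C)$ against $v_2 \in C \cup \an_{\GG^\ast}(C)$, since the two ancestor sets are not comparable in general: $\an_{\GG^\ast}(C)$ gains ancestors when a bidirected edge $a \leftrightarrow b$ becomes $a \leftarrow b$, and loses ancestors when a directed edge $a \rightarrow b$ into a selection-adjacent node becomes undirected. Handling this will require a secondary surgery on the directed witness paths to $C$, replacing a witness in one graph by one that exists in the other; ribbonlessness of $\GG$ together with the absence of $i \qrightarrow j - k$ triples in $\GG^\ast$ should provide the replacement edges. Carrying this reconciliation out in tandem with the collider-shortcut argument, most likely via a simultaneous induction on path length and the number of colliders whose activation status differs between $\GG$ and $\GG^\ast$, is the most delicate step of the argument.
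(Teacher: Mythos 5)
The paper offers no proof of this lemma; it is imported verbatim from \cite[Proposition 1]{SL14}, so there is no in-paper argument to compare yours against. Taken on its own terms, your path-lifting strategy is sound and essentially complete in outline: the skeleton is preserved, arrowheads are only ever deleted, a collider whose status differs between the two graphs must satisfy condition (2) of the ribbon definition, and ribbonlessness then supplies the endpoint-identical shortcut edge whose substitution contradicts minimality of the chosen $m$-connecting path (note that the shortcut is endpoint-identical in $\GG^\ast$ as well as in $\GG$, since anteriorization removes arrowheads at a node on all incident edges simultaneously). Two remarks. First, your parenthetical ``equivalently, when condition (2) of the ribbon definition is satisfied at $j$'' overstates the correspondence: condition (2) also covers nodes lying on directed cycles, which the anterior transformation ignores; only the implication ``an arrowhead at $j$ is removed $\Rightarrow$ condition (2) holds at $j$'' is true, and fortunately that is the only direction your argument uses. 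Second, the step you flag as most delicate---reconciling $\an_\GG(C)$ with $\an_{\GG^\ast}(C)$---needs no secondary surgery and no simultaneous induction. Once the shortcut argument has forced every collider of the minimal path to keep its status in both graphs, each such collider $j$ retains an arrowhead in $\GG^\ast$ and hence was never anteriorized. Your own description of the iteration shows that anteriorization propagates backwards along directed edges: if $a \rightarrow b$ in $\GG$ and $b$ is anteriorized, then $a$ is too. Consequently no vertex on a $\GG$-directed witness path from such a $j$ into $C$ is anteriorized, so that path survives unchanged in $\GG^\ast$; and conversely a $\GG^\ast$-directed witness path out of a non-anteriorized $j$ cannot contain any edge whose orientation was created by the transformation, since every such edge has an anteriorized source, so the path is already directed in $\GG$. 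This closes both inclusions with no further case analysis.
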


We will also require a few lemmas about maximal graphs, the first of which is immediate:

\begin{lemma}
\label{lem: maximal graph}
An LMG $\GG = ([p],E)$ is maximal if and only if for every pair of nonadjacent nodes $i,j\in[p]$ there exists a subset $C\subset[p]\backslash\{i,j\}$ such that 
$
\langle i, j\mid C\rangle \in \J(\GG).
$
\end{lemma}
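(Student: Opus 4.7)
The plan is to prove the two implications separately. The reverse (``if'') direction is short and direct, while the forward (``only if'') direction I would argue by contrapositive, reducing it to the construction of an inducing-path-like structure that lets one add an edge to $\GG$ without changing $\J(\GG)$.

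For the reverse direction, suppose that every nonadjacent pair $i,j$ in $\GG$ admits a separator $C_{ij}\subseteq [p]\setminus\{i,j\}$ with $\langle i,j\mid C_{ij}\rangle \in \J(\GG)$. Let $\GG'$ be obtained from $\GG$ by adding any single edge $e$ between some nonadjacent pair $i,j$. The single-edge path through $e$ has no non-endpoint vertices, so it has no collider subpaths and no internal nodes blockable by $C_{ij}$; it is therefore $m$-connecting given $C_{ij}$. Hence $\langle i,j\mid C_{ij}\rangle \notin \J(\GG')$, so $\J(\GG')\neq \J(\GG)$, and $\GG$ is maximal.

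For the forward direction I would argue the contrapositive: assume nonadjacent $i,j$ in $\GG$ admit no separating set, and produce an edge between $i$ and $j$ that can be added without changing $\J(\GG)$. The construction proceeds in three steps. First, I extract an inducing path $\pi$ between $i$ and $j$ in $\GG$, i.e.\ a path whose non-endpoint vertices are all colliders and all lie in $\an_\GG(\{i,j\})$. Existence follows from the no-separator hypothesis applied with $C=[p]\setminus\{i,j\}$, which forces at least one $m$-connecting path to have only collider non-endpoints; a minimality refinement (analogous to the construction in \cite{RS02} for MAGs) then promotes these colliders to ancestors of $\{i,j\}$. Second, I introduce an edge $e$ between $i$ and $j$ whose arrowhead marks at $i$ and $j$ match those of $\pi$ at its endpoints. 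Third, I verify that any $m$-connecting path in $\GG\cup\{e\}$ using $e$ can be spliced at $i$ and $j$ with $\pi$ to give an $m$-connecting path already in $\GG$, using the collider/ancestor structure of $\pi$ to preserve the ``collider in $C\cup\an(C)$'' criterion at the splice points.

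The main obstacle is Step~1 in the full generality of LMGs: the standard inducing-path arguments (e.g.\ in \cite{RS02, S12, SL14}) exploit either acyclicity (for AGs) or the ribbonless property (for RGs) to run a clean shortest-path argument, and directed cycles or undirected edges in an arbitrary LMG may interact nontrivially with the collider pattern. When $\GG$ is an RG, passing to the anterior graph $\GG^\ast$ via Lemma~\ref{lem: RG anterior graph} simplifies both Step~1 and the splicing in Step~3; the general case would rely on the finer inducing-path machinery developed by Sadeghi for stable mixed graphs. The matching of endpoint marks in Step~2 and the splicing in Step~3 are then routine once the right notion of inducing path for the class is fixed.
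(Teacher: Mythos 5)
The paper offers no argument for this lemma at all---it is declared ``immediate,'' with the nontrivial content implicitly delegated to the maximality theory of \cite{SL14} (indeed, the paper invokes \cite[Lemma 4]{SL14} in the proof of Lemma~\ref{lem: maximal graphs, adjacencies, v-structures} for exactly the conclusion of the forward direction). Your ``if'' direction is complete, correct, and really is the only immediate part: once $i$ and $j$ become adjacent, the single-edge path is $m$-connecting given every conditioning set, so the separation $\langle i,j\mid C_{ij}\rangle$ is destroyed. (A caveat that concerns the lemma rather than your proof: the paper's definition of maximality literally quantifies over ``any edge,'' including a parallel edge between already-adjacent nodes, and under that reading the lemma fails---add $1\leftrightarrow 2$ to the graph $1\rightarrow 2$ with an isolated node $3$ and $\J$ is unchanged. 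The intended reading is clearly edges between nonadjacent pairs, which is what you use.)

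The ``only if'' direction has a genuine gap. First, your Step~1 does not produce an inducing path as claimed: conditioning on $C=[p]\setminus\{i,j\}$ forces every non-endpoint of an $m$-connecting path to be a collider, but gives no control on whether those colliders are ancestors (or anteriors) of $\{i,j\}$, and no ``minimality refinement'' of that particular path supplies the ancestor condition---it comes from a different separator. The standard route is to use the full strength of the no-separator hypothesis on the canonical candidate $\ant_\GG(\{i,j\})\setminus\{i,j\}$, whose failure yields a connecting path whose colliders already sit in the required anterior set. Second, and more seriously, your Step~3 is precisely the content of the primitive-inducing-path characterization of maximality for general LMGs in \cite{SL14,S12}: the splicing argument in the presence of directed cycles and undirected edges is the hard part of that theory, not a routine verification, and your closing sentence in effect cites the machinery whose conclusion is the statement being proved. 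So the proposal has the right strategy and a sound reverse direction, but the forward direction both misidentifies how the inducing path is obtained and defers its key step to the external result on which the lemma actually rests.
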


The next lemma takes a bit more work, but is a natural extension of an observation made by \cite{ARS09}.
For the sake of completeness, we give a proof here.
  
\begin{lemma}
\label{lem: maximal graphs, adjacencies, v-structures}
Let $\GG$ and $\HH$ be Markov equivalent maximal simple LMGs on node set $[p]$.
Then $\GG$ and $\HH$ have the same adjacencies and v-structures.
\end{lemma}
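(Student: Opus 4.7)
The plan is to reduce both assertions to applications of Lemma~\ref{lem: maximal graph}, exploiting the fact that maximality forces every non-adjacency to be witnessed by an m-separating set that must then also act in the Markov equivalent graph.

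For the adjacencies, fix $i,j \in [p]$ non-adjacent in $\GG$. Maximality of $\GG$ produces a set $C \subset [p] \setminus \{i,j\}$ with $\langle i,j \mid C\rangle \in \J(\GG) = \J(\HH)$. If $i,j$ were adjacent in $\HH$, then by simplicity there is a unique edge between them, and the length-one path $\langle i, j\rangle$ has no intermediate vertex, hence is trivially m-connecting given any $C$ disjoint from $\{i,j\}$. This contradicts the m-separation of $i$ and $j$ in $\HH$. By symmetry, $\GG$ and $\HH$ share the same skeleton.

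For the v-structures, suppose $\langle i,j,k\rangle$ is a v-structure in $\GG$, so $i,k$ are non-adjacent in $\GG$ and, by the previous step, in $\HH$ as well, while $\{i,j\}$ and $\{j,k\}$ remain adjacent pairs in $\HH$. Applying Lemma~\ref{lem: maximal graph} now to $\HH$ yields a set $S \subset [p] \setminus \{i,k\}$ with $\langle i,k \mid S\rangle \in \J(\HH) = \J(\GG)$. In $\GG$, the unique length-two path $\pi = \langle i,j,k\rangle$ has $j$ as a collider; since $S$ blocks $\pi$, we must have $j \notin S \cup \an_\GG(S)$, and in particular $j \notin S$. In $\HH$, the same triple supports a path $\pi$; if $j$ were not a collider on $\pi$ in $\HH$, then $j \notin S$ would render $\pi$ m-connecting (the non-collider criterion being exactly $j \notin S$), contradicting the m-separation of $i$ and $k$ in $\HH$. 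Hence $j$ is a collider on $\pi$ in $\HH$, and combined with the non-adjacency of $i,k$ this gives a v-structure in $\HH$. Interchanging the roles of $\GG$ and $\HH$ yields the reverse inclusion.

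The main subtlety is the invocation of simplicity: it ensures that between each adjacent pair there is a unique edge, so the triple $\langle i,j,k\rangle$ determines a single length-two path and the collider/non-collider status at $j$ on $\pi$ is unambiguous in either graph. Without simplicity, parallel edges of different types could in principle generate both collider and non-collider realizations of the same vertex triple, and the dichotomy on which the v-structure step hinges would collapse; everything else in the argument is a direct application of the m-connecting definition.
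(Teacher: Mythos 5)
Your proposal is correct and follows essentially the same route as the paper's proof: maximality supplies a witnessing $m$-separating set for each non-adjacent pair, Markov equivalence transfers it to the other graph, a single edge is always $m$-connecting so skeletons must agree, and the observation that the middle vertex $j$ of the v-structure cannot lie in the separating set forces $j$ to be a collider in both graphs. The only cosmetic difference is that you draw the separating set for the v-structure step from $\HH$ rather than $\GG$; the logic is otherwise identical, and your remark on why simplicity is needed is a fair (if unstated in the paper) observation.
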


\begin{proof}
Suppose first that $i$ and $j$ are adjacent in $\GG$ but not in $\HH$.  
Since they are not adjacent in $\HH$ then, by \cite[Lemma 4]{SL14}, there exists a subset $C$ of $[p]\setminus\{i,j\}$ such that $i$ is $m$-separated from $j$ given $C$ in $\HH$.
Therefore, since $\J(\GG) = \J(\HH)$ and $\langle i,j \mid C\rangle\in \J(\HH)$, it follows that $i$ is $m$-separated from $j$ given $C$ in $\GG$.  
However, this contradicts the fact that $i$ and $j$ are adjacent, since an adjacency is always an $m$-connecting path.  
By symmetry of this argument, we deduce that $\GG$ and $\HH$ have the same adjacencies.  

Suppose now that $i \qrightarrow j\qleftarrow k$ forms a v-structure in $\GG$ but not in $\HH$.  
Since $i$ and $k$ are not adjacent, then by maximality of $\GG$ we know that there exists a triple $\langle i, k \mid C\rangle\in \J(\GG)$ for some $C\subset[p]\setminus\{i,j\}$.  
By Markov equivalence of $\GG$ and $\HH$, we know that $\langle i, k, \mid C\rangle\in\J(\HH)$ as well. 
Moreover, by the previous argument, we know that $i$ and $k$ are not adjacent in $\HH$.  
Note now that we must have $j\notin C$, since otherwise $i \qrightarrow j\qleftarrow k$ would form an $m$-connecting path given $C$ in $\GG$. 
Thus, since $j\notin C$, $i$ and $k$ are not adjacent in $\HH$, and the tripath $\langle i,j,k\rangle$ in $\HH$ is not a collider path (since it is not a v-structure), then the path $\langle i,j,k\rangle$ must be $m$-connecting given $C$ in $\HH$.  
However, this contradicts the fact that $\langle i, k \mid C\rangle\in \J(\HH)$.  
Thus, we conclude that $\GG$ and $\HH$ must have the same v-structures.  
\end{proof}

	\begin{figure}
	\centering

\begin{tikzpicture}[thick,scale=.5]

	 \node[draw,align=center, color = black!100] (ZAP70) at (-9,0) {\scriptsize ZAP70};
	 \node[draw,align=center, color = black!100] (LCK) at (-6,0) {\scriptsize LCK};
	 \node[draw,align=center, color = black!100] (SLP-76) at (-3,0) {\scriptsize SLP-76};
	 \node[draw,align=center, color = black!100] (PI3K) at (0,0) {\scriptsize PI3K};
	 \node[draw,align=center, color = black!100] (RAS) at (3,0) {\scriptsize RAS};
	 \node[draw,align=center, color = black!100] (Cytohesin) at (6,0) {\scriptsize Cytohesin};
	 \node[draw,align=center, color = black!100] (PKC) at (2,-2) {\scriptsize PKC};
	 \node[draw,align=center, color = black!100] (PLC) at (-3,-3) {\scriptsize PLC$\gamma$};
	 \node[draw,align=center, color = black!100] (PIP3) at (-.5,-4.5) {\scriptsize PIP3};
	 \node[draw,align=center, color = black!100] (PIP2) at (-4,-5) {\scriptsize PIP2};
	 \node[draw,align=center, color = black!100] (Akt) at (2,-4) {\scriptsize Akt};
	 \node[draw,align=center, color = black!100] (PKA) at (2.5,-6) {\scriptsize PKA};
	 \node[draw,align=center, color = black!100] (Raf) at (4.5,-6) {\scriptsize Raf};
	 \node[draw,align=center, color = black!100] (Mek$1/2$) at (7.5,-6) {\scriptsize Mek$1/2$};
	 \node[draw,align=center, color = black!100] (Erk$1/2$) at (6,-8) {\scriptsize Erk$1/2$};
	 \node[draw,align=center, color = black!100] (P38) at (2.5,-8) {\scriptsize P38};
	 \node[draw,align=center, color = black!100] (JNK) at (0,-6.5) {\scriptsize JNK};
	 	
	 \draw[->]   (ZAP70) -- (LCK) ;
 	 \draw[->]   (LCK) -- (SLP-76) ;
 	 \draw[->]   (SLP-76) -- (PI3K) ;
 	 \draw[->]   (PI3K) -- (RAS) ;
 	 \draw[->]   (Cytohesin) -- (RAS) ;
 	 \draw[->]   (PKC) -- (RAS) ;
 	 \draw[->]   (PKC) -- (PIP2) ;
 	 \draw[->]   (PLC) -- (PKC) ;
 	 \draw[->]   (PIP3) -- (PLC) ;
 	 \draw[<->]   (PIP2) -- (PIP3) ;
 	 \draw  (PIP2) edge[<->,bend left=45] (PI3K) ;
 	 \draw[->]   (PLC) -- (PIP2) ;
 	 \draw[->]   (PIP3) -- (Akt) ;
 	 \draw[->]   (PKA) -- (Akt) ;
 	 \draw[->]   (PKC) -- (Raf) ;
 	 \draw[->]   (RAS) -- (Raf) ;
 	 \draw[->]   (Raf) -- (Mek$1/2$) ;
 	 \draw[->]   (Mek$1/2$) -- (Erk$1/2$) ;
 	 \draw[->]   (PKA) -- (P38) ;
 	 \draw[->]   (PKC) -- (JNK) ;
	 
	 \node[circle, draw, color = ist green!90, fill=black!0, inner sep=1pt, minimum width=1pt] (1) at (-10,-1.5) {\scriptsize$1$};
	 \node[circle, draw, color = ist green!90, fill=black!0, inner sep=1pt, minimum width=1pt] (2) at (1,1.25) {\scriptsize$2$};
	 \node[circle, draw, color = ist green!90, fill=black!0, inner sep=1pt, minimum width=1pt] (3) at (8,-1.5) {\scriptsize$3$};
	 \node[circle, draw, color = ist green!90, fill=black!0, inner sep=1pt, minimum width=1pt] (4) at (1.7,-.5) {\scriptsize$4$};
	 \node[circle, draw, color = ist green!90, fill=black!0, inner sep=1pt, minimum width=1pt] (5) at (1.5,-7) {\scriptsize$5$};
	 \node[circle, draw, color = red!90, fill=black!0, inner sep=1pt, minimum width=1pt] (6) at (1,-.9) {\scriptsize$6$};
	 \node[circle, draw, color = red!90, fill=black!0, inner sep=1pt, minimum width=1pt] (7) at (1.25,-5) {\scriptsize$7$};
	 \node[circle, draw, color = red!90, fill=black!0, inner sep=1pt, minimum width=1pt] (8) at (-5,-3.5) {\scriptsize$8$};
	 \node[circle, draw, color = red!90, fill=black!0, inner sep=1pt, minimum width=1pt] (9) at (7,-4.5) {\scriptsize$9$};
	 \node[circle, draw, color = red!90, fill=black!0, inner sep=1pt, minimum width=1pt] (10) at (-1,1) {\scriptsize$10$};

	  \draw[->,ist green]   (1) -- (ZAP70) ;
	  \draw[->,ist green]   (2) -- (PI3K) ;
	  \draw[->,ist green]   (3) -- (Cytohesin) ;
	  \draw[->,ist green]   (4) -- (PKC) ;
	  \draw[->,ist green]   (5) -- (PKA) ;
	  \draw[->,red]   (6) -- (PKC) ;
	  \draw[->,red]   (7) -- (Akt) ;
	  \draw[->,red]   (8) -- (PIP2) ;
	  \draw[->,red]   (9) -- (Mek$1/2$) ;
	  \draw[->,red]   (10) -- (PI3K) ;
	  
	 \node (key1) at (-8.5,-5) {\scriptsize \textcolor{ist green}{activating intervention}} ;
	 \node (key2) at (-8.5,-5.75) {\scriptsize \textcolor{red}{inhibiting intervention}} ;

\end{tikzpicture}
	\vspace{-0.2cm}
	\caption{An accepted ground-truth protein signaling network studied in primary human immune system cells \protect\cite{Sachs05}. Red and green nodes represent the points of intervention, with red nodes denoting inhibitory interventions and green nodes denoting activating interventions.  When all nodes and arrows, black, red, and green, are considered, this is an example of an $\I$-LMG that is not an $\I$-DAG.  As this is the accepted ground-truth in the biology, researchers need to distinguish it from other Markov equivalent LMGs based on their interventional experiments (see for instance \protect\cite{Sachs05,HB15}).  Since it is not a DAG we cannot apply the graphical characterization of \protect\cite{YKU18} to do so.  Hence, a more general theory of $\I$-Markov equivalence is needed.  Developing such a theory is the focus of this paper. }
	\label{fig: Sachs}
	\end{figure}
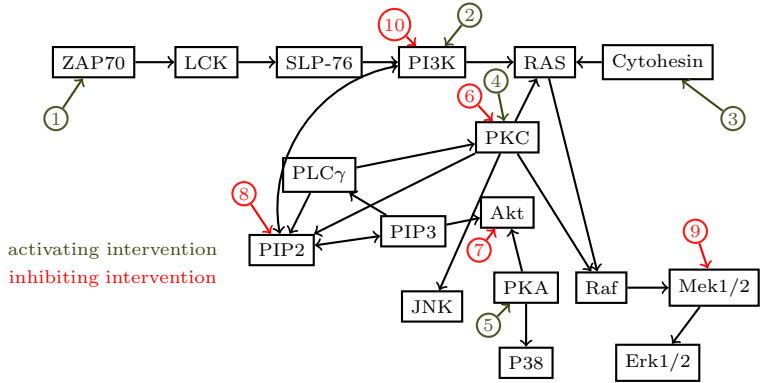
	
\section{Lemmata and Proofs for Section~\ref{sec: I-Markov equivalence for MAGs}} 
\label{app: lemmata and proofs for sec 3}
We will require the following fact about the $\I$-graph produced by intervening on an AG without doubly-intervening on any selection adjacent nodes.
\begin{lemma}
\label{lem: ribbonless}
Let $\GG = ([p],E)$ be an AG, and $\I$ a collection of intervention targets.  
Then $\I$ does not doubly-intervene on any selection adjacent nodes of $\GG$ if and only if $\GG^\I$ is ribbonless.  
\end{lemma}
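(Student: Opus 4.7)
The plan is a case analysis on the locations of the three vertices of a candidate ribbon $\langle i,j,k\rangle$ in $\GG^\I$, exploiting two structural facts: (a) every $\omega_I \in W_\I$ is a source in $\GG^\I$, so cannot serve as the middle (collider) vertex of any collider path, and (b) since $W_\I$-nodes have no incoming edges in $\GG^\I$, the descendant relation and the absence of directed cycles transfer verbatim from $\GG$ to $\GG^\I$. In particular, $j$ must lie in $[p]$ and $\GG^\I$ has no directed cycles, so condition~(2) of the ribbon definition reduces to ``$j$ or a descendant of $j$ in $\GG$ is selection adjacent in $\GG$.''

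For the ``if'' direction, assume $\I$ doubly-intervenes on a selection adjacent node $v$ through distinct multiset elements $I, J \in \I$. Then $\langle \omega_I, v, \omega_J\rangle$ is a collider path in $\GG^\I$ since both edges $\omega_I \to v$ and $\omega_J \to v$ point into $v$. Condition~(1) is satisfied vacuously because $W_\I$-vertices share no edges with each other, and condition~(2) holds because $v$ is an endpoint of an undirected edge in $\GG$. Hence $\GG^\I$ contains a ribbon.

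For the ``only if'' direction, suppose $\langle i,j,k\rangle$ is a ribbon in $\GG^\I$, so $j\in[p]$, and split on where $i,k$ lie. If $i,k\in[p]$, the entire triple lives in $\GG$; by the transfer of descendants and cycles noted above, it is also a ribbon in $\GG$, contradicting that $\GG$ is an AG (hence an RG, hence ribbonless). If exactly one of $i,k$ lies in $W_\I$, say $i=\omega_I$, then the $j$--$k$ edge is $k\to j$ or $k\leftrightarrow j$, which places an arrowhead at $j$ in $\GG$; by AG property~(3), $j$ is not selection adjacent. Moreover, if $j'$ were a proper descendant of $j$ in $\GG$, then the last edge of a witnessing directed path $j\to\cdots\to j'$ places an arrowhead at $j'$, forcing $j'$ to also not be selection adjacent by AG property~(3). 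Thus condition~(2) fails, a contradiction. The remaining case is $i=\omega_I$, $k=\omega_J$ with $I\neq J$ as multiset elements; then $j\in I\cap J$, so $\I$ doubly-intervenes on $j$. The same descendants-cannot-be-selection-adjacent argument forces condition~(2) to be witnessed by $j$ itself, so $j$ is selection adjacent and we are done.

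The main obstacle is the mixed case where exactly one endpoint is interventional: one must recognize that AG property~(3) does double duty, not only forbidding $j$ from being selection adjacent (since a ``real'' arrowhead lands at $j$) but also ruling out any strictly proper descendant as a selection adjacent witness. This is what collapses the argument onto the doubly-interventional case and lets selection adjacency of $j$ emerge as the exact obstruction.
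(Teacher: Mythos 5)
Your proposal is correct and follows essentially the same route as the paper: the easy direction exhibits the ribbon $\langle \omega_I, v, \omega_J\rangle$ at a doubly-intervened selection adjacent node, and the hard direction uses the absence of directed cycles plus AG property (3) (no arrowhead may point at a selection adjacent node, which also rules out proper descendants of $j$ as witnesses) to force both outer vertices of any ribbon into $W_\I$. Your version merely makes explicit the case analysis on where $i$ and $k$ lie, which the paper compresses into the single sentence ``Since $\GG$ is an AG, this could only happen if $i,k\in W_\I$.''
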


\begin{proof}
Suppose that $\GG^\I$ contains no doubly-intervened selection adjacent nodes.  
If $\GG^\I$ contains a ribbon then there exists a collider path $\langle i,j,k\rangle$ such that (1) $i$ and $k$ have no endpoint identical edge between them, and (2) $j$ or a descendant of $j$ is an endpoint of a line, or $j$ is in a directed cycle.
Since $\GG$ is an AG, it contains no directed cycle. 
So any ribbon $\langle i,j,k\rangle$ in $\GG^\I$ must have $j$ or a descendant of $j$ as the endpoint of an undirected edge.  
Since $\GG$ is an AG, this could only happen if $i,k\in W_\I$, meaning that $j$ must be a doubly-intervened selection adjacent node.  
Hence, $\GG^\I$ must be ribbonless.  

Conversely, suppose that $\GG^\I$ is ribbonless.  
Let $j$ be a selection adjacent node in $\GG$.  
If $\omega_I,\omega_J\in W_\I$ are such that $j\in I\cap J$, then the collider path $\langle \omega_I,j,\omega_J\rangle$ would be a ribbon in $\GG^\I$.  
Hence, $\GG^\I$ contains no doubly-intervened selection adjacent nodes. 
\end{proof}

To generalize Theorem~\ref{thm: YKU characterization} to AGs, we will work with maximal AGs, or MAGs.  
Our first goal is to show that this is sufficient in the sense that two AGs $\GG$ and $\HH$ are $\I$-Markov equivalent if and only if their unique maximal extensions $\overline{\GG}$ and $\overline{\HH}$ are $\I$-Markov equivalent.  
To do so, we need the following two lemmas. 
Recall that, given $\GG = ([p], E)$ and a subset $A\subset[p]$, we let $\GG\langle A\rangle$ denote the induced subgraph of $\GG$ on $A$.

\begin{lemma}
\label{lem: restriction}
Let $\GG = ([p],E)$ be an AG, and $\I$ a collection of intervention targets that does not doubly-intervene on any selection adjacent nodes of $\GG$.  
Then 
$
\overline{\GG} = \overline{\GG^\I}\langle [p]\rangle.
$
\end{lemma}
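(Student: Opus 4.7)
The plan is to prove the two inclusions between the edge sets of $\overline{\GG}$ and $\overline{\GG^\I}\langle[p]\rangle$, which are both LMGs on vertex set $[p]$. First I would note that by Lemma~\ref{lem: ribbonless} the hypothesis on $\I$ forces $\GG^\I$ to be ribbonless, so it has a unique maximal extension $\overline{\GG^\I}$ obtained by adding endpoint-identical edges between the endpoints of inducing paths in the RG sense of \cite{S12}; since $\GG$ is an AG (hence an RG), $\overline{\GG}$ is obtained from $\GG$ in the same way. Thus the whole argument reduces to matching up inducing paths with endpoints in $[p]$.

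The key structural observation is that every $\omega_I \in W_\I$ is a source in $\GG^\I$. This has two consequences. First, $\omega_I$ cannot be a collider on any path, so no interventional vertex can appear as a non-endpoint of any inducing path in $\GG^\I$. Second, a directed path in $\GG^\I$ joining two vertices of $[p]$ cannot traverse $W_\I$, so ancestor relations between elements of $[p]$ agree in $\GG$ and in $\GG^\I$.

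With these in hand I would show that inducing paths in $\GG^\I$ with both endpoints in $[p]$ coincide with inducing paths in $\GG$ between those endpoints. If $\pi$ is an inducing path in $\GG$, then $\pi$ still lies in $\GG^\I$, its non-endpoints remain colliders (a local property), and they remain ancestors of an endpoint by the second observation above. Conversely, if $\pi$ is an inducing path in $\GG^\I$ between $i,j\in[p]$, the first observation forces every internal vertex of $\pi$ to lie in $[p]$, so $\pi$ is a path in $\GG$; its colliders and ancestor conditions transfer back by the second observation. Because the marks at the endpoints of $\pi$ are intrinsic to the path, the endpoint-identical edge added between $i$ and $j$ is the same in both maximal extensions.

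Putting the pieces together, every edge of $\overline{\GG^\I}\langle[p]\rangle$ is either an edge of $\GG = \GG^\I\langle[p]\rangle \subseteq \overline{\GG}$, or one added in response to an inducing path in $\GG^\I$ with endpoints in $[p]$, which by the correspondence is an edge added in forming $\overline{\GG}$; the reverse inclusion is symmetric. The main obstacle I anticipate is simply being careful that $\GG^\I$ is an RG but in general not an AG, so I must invoke the RG-theoretic maximality criterion rather than the MAG criterion from \cite{RS02}; once that is done the rest is a direct check that interventional sources do not interfere with inducing paths.
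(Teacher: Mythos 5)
Your proposal is correct and follows essentially the same route as the paper: both reduce the equality to a correspondence between inducing paths in $\GG$ and inducing paths in $\GG^\I$ with endpoints in $[p]$, exploiting that every node of $W_\I$ is a source and hence can neither be an internal (collider) vertex of an inducing path nor lie on a directed path witnessing an ancestor relation between observed nodes. The only cosmetic differences are that the paper rules out internal $W_\I$-vertices via the fact that every edge on such an inducing path is bidirected (citing \cite{RS02}) and phrases the reverse inclusion as a contradiction with the maximality of $\overline{\GG^\I}$, whereas you argue directly from the collider property and a two-way correspondence of inducing paths.
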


\begin{proof}
Note first, by Lemma~\ref{lem: ribbonless}, $\overline{\GG^\I}$ is well-defined since $\GG^\I$ is ribbonless.  
We now show the inclusion $\overline{\GG}\supseteq\overline{\GG^\I}\langle[p]\rangle$.  
First, recall from \cite[Theorem 5.1]{RS02} that, in an AG $\GG$, if two nodes $i$ and $j$ are adjacent in $\overline{\GG}$ but not in $\GG$ then they are connected by a bidirected edge in $\overline{\GG}$ and an inducing path containing at least two edges in $\GG$.  
Hence, to prove the desired inclusion, we must show that if $i,j\in[p]$ are connected by a bidirected edge in $\overline{\GG^\I}$ but are not adjacent in $\GG^\I$, then $i$ and $j$ are connected by a bidirected edge in $\overline{\GG}$.  
To this end, note that if $i,j\in[p]$ are adjacent in $\overline{\GG^\I}$ but not in $\GG^\I$ then they are connected by an inducing path in $\GG^\I$ containing at least two edges, say $\pi = \langle v_0 = i, v_1,\ldots,v_{m-1},v_m = j\rangle$ where $m\geq2$.   
Thus, to prove that $i$ and $j$ are also connected by a bidirected edge in $\overline{\GG}$, it suffices to show that $\pi$ is an inducing path in $\GG$. 
However, by \cite[Lemma 4.5 (iii)]{RS02}, we know that every edge on $\pi$ is bidirected.  
Hence, since each node in $W_\I$ is a source node, we know that $v_1,\ldots, v_{m-1}\in[p]$ and $\pi$ is a path in $\GG$.  
Similarly, since $\pi$ is an inducing path in $\GG^\I$ then each $v_k$, $k = 1,\ldots, m-1$, is an ancestor of $i$ or $j$.  
So, without loss of generality, there exists a directed path $s = \langle v_0^\prime = v_k,v_1^\prime, \ldots, v_t^\prime = i\rangle$ from $v_k$ to $i$ in $\GG^\I$.  
However, since all nodes in $W_\I$ are source nodes in $\GG^\I$, then no vertex or edge on $s$ can be in $W_\I$ or $E_\I$, respectively.  
Hence, $s$ is also a directed path in $\GG$. 
It then follows that $\pi$ is an inducing path in $\GG$, and we conclude that $\overline{\GG}\supseteq\overline{\GG^\I}\langle[p]\rangle$.

To see the reverse inclusion, we must show that if $i,j\in[p]$ are nonadjacent in $\GG$ but adjacent in $\overline{\GG}$ (and therefore connected by a bidirected edge), then $i$ and $j$ are connected by a bidirected edge in $\overline{\GG^\I}\langle[p]\rangle$ as well.  
So suppose that $i,j\in[p]$ are nonadjacent in $\GG$ but adjacent in $\overline{\GG}$.  
For the sake of contradiction, suppose now that $i$ and $j$ are not connected by a bidirected edge in $\overline{\GG^\I}\langle[p]\rangle$.  
Then they are nonadjacent in $\overline{\GG^\I}\langle[p]\rangle$.  
Since $i$ and $j$ are adjacent in $\overline{\GG}$, then there exists an inducing path containing at least two edges in $\GG$ between $i$ and $j$.  
Then the same inducing path must also be in $\GG^\I$ and in $\overline{\GG^\I}\langle[p]\rangle$.  
However, by \cite[Corollary 4.6]{RS02}, this contradicts the maximality of $\overline{\GG^\I}$.  
Thus, we conclude that $\overline{\GG^\I}\langle[p]\rangle\supseteq\overline{\GG}$, and hence $\overline{\GG^\I}\langle[p]\rangle=\overline{\GG}$. 
\end{proof}

\begin{lemma}
\label{lem: no additional intervening}
Let $\GG = ([p],E)$ be an AG, and $\I$ a collection of intervention targets that does not doubly-intervene on any selection adjacent nodes of $\GG$.  
If $i\in W_\I$ and $j\in[p]$ such that $i$ and $j$ are not adjacent in $\GG^\I$, then $i$ and $j$ are not adjacent in $\overline{\GG^\I}$.
\end{lemma}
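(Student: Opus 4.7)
The plan is to prove the contrapositive of an adjacency statement: assuming $\omega_I$ and $j$ become adjacent in $\overline{\GG^\I}$ while being non-adjacent in $\GG^\I$, I will extract an inducing path from $\omega_I$ to $j$ in $\GG^\I$ of length at least two and then use the ancestrality rules of $\GG$ to reach a contradiction. By Lemma~\ref{lem: ribbonless}, the hypothesis on $\I$ ensures that $\GG^\I$ is ribbonless, so the RG analogue of \cite[Theorem 5.1]{RS02} (mentioned in the excerpt and attributed to \cite{S12}) applies: any new adjacency in $\overline{\GG^\I}$ must correspond to an inducing path in $\GG^\I$ of length at least two. Call such a path $\pi = \langle \omega_I = v_0, v_1, \ldots, v_m = j\rangle$ with $m \geq 2$; here $j \notin I$ since $\omega_I$ and $j$ are assumed non-adjacent in $\GG^\I$.

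Next, I would extract two structural properties of $\pi$. First, because the only edges incident to $\omega_I$ in $\GG^\I$ are $\omega_I \to k$ for $k \in I$, one has $v_1 \in I$, and in particular $v_1 \neq j$. Second, every non-endpoint $v_k$ of $\pi$ is simultaneously a collider on $\pi$ and an ancestor of one of its endpoints in $\GG^\I$. Since each node $\omega_J \in W_\I$ is a source, no such node can serve as a collider, forcing $v_1, \ldots, v_{m-1} \in [p]$; likewise $\omega_I$ has no ancestors in $\GG^\I$, so every $v_k$ with $1 \leq k \leq m-1$ must be an ancestor of $j$ in $\GG^\I$, and any witnessing directed path must lie entirely in $\GG$ (again because $W_\I$ consists of sources). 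Hence $v_1, \ldots, v_{m-1} \in \an_\GG(j)$.

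Finally, I would focus on the last edge $v_{m-1} - v_m = v_{m-1} - j$ of $\pi$. When $m \geq 3$, the vertex $v_{m-1}$ is a non-endpoint collider on $\pi$, so this edge must carry an arrowhead at $v_{m-1}$; when $m = 2$, the same conclusion holds because the edge $\omega_I \to v_1$ has an arrowhead at $v_1$ and $v_1$ must be a collider on $\pi$. In either case the edge is $v_{m-1} \leftarrow j$ or $v_{m-1} \leftrightarrow j$ in $\GG$. The first possibility yields $j \to v_{m-1}$, which together with the existing directed path $v_{m-1} \to \cdots \to j$ witnessing $v_{m-1} \in \an_\GG(j)$ creates a directed cycle, contradicting the AG property of $\GG$. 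The second possibility places a bidirected edge between $v_{m-1}$ and its descendant $j$, again contradicting the AG property. Either way we obtain the desired contradiction, so $\omega_I$ and $j$ remain non-adjacent in $\overline{\GG^\I}$.

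The only delicate point I expect to double-check is step two, where I must be careful that ``ancestor on $\pi$'' uses ancestrality in $\GG^\I$ but transfers cleanly to ancestrality in $\GG$ for nodes of $[p]$; this is immediate because every directed path in $\GG^\I$ between two nodes of $[p]$ avoids the source set $W_\I$, but it is the place where the source-node behavior of $W_\I$ is really doing the work.
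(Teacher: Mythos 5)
Your proof is correct, but it takes a genuinely different route from the paper's. The paper argues via an explicit separating set: it shows that $\omega_I$ and $j$ are $m$-separated in $\GG^\I$ given $\left(\ant(j)\cup W_\I\right)\setminus\{\omega_I,j\}$ by taking a hypothetical $m$-connecting path, locating its last collider, deducing that the terminal segment must be a directed path emanating from $j$, and then deriving a directed cycle in $\GG$; since $\J(\overline{\GG^\I})=\J(\GG^\I)$, the existence of such a separating set rules out the adjacency in the maximal extension (this is where Lemma~\ref{lem: maximal graph} enters). You instead work directly with the inducing-path characterization of $\overline{\GG^\I}$ --- the same fact from \cite{S12} that the paper invokes in Lemma~\ref{lem: restriction} and Proposition~\ref{prop: commuting} --- and rule out an inducing path of length at least two from $\omega_I$ to $j$ by showing its penultimate vertex $v_{m-1}$ would have to be simultaneously an ancestor of $j$ in $\GG$ and the target of an arrowhead on its edge to $j$, violating ancestrality. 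Both arguments are sound and of comparable length; yours is arguably more direct, avoiding the case analysis over collider positions on an arbitrary connecting path, while the paper's yields an explicit separating set as a by-product. The one definitional point to pin down in your version is the inner-node condition for inducing paths in a ribbonless but non-ancestral graph such as $\GG^\I$, which in \cite{S12} is phrased with \emph{anterior} rather than ancestral inner nodes; your final contradiction survives this substitution (an arrowhead into a node that is merely anterior to $j$ either closes a directed cycle, places a bidirected edge between a node and its descendant, or points into a selection-adjacent node, each forbidden in an AG, once one notes the witnessing anterior path lies entirely in $\GG$), but it is worth stating explicitly.
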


\begin{proof}
By Lemma~\ref{lem: maximal graph} it suffices to prove that 
\[
i\independent j\mid\left(\ant(j)\cup W_\I\right)\backslash\{i,j\}.
\]
Suppose on the contrary, that there exists an $m$-connecting path $\pi = \langle i,q_1,\ldots,q_m, j\rangle$ given $C= \left(\ant(j)\cup W_\I\right)\backslash\{i,j\}$ in $\GG^\I$.  
Notice first that since $i\in W_\I$, we know that $i\rightarrow q_1$.  
Moreover, since all nodes in $W_\I$ are source nodes in $\GG^\I$ and $W_\I\backslash\{i\}\subseteq C$, then $q_1,\ldots,q_m\in[p]$; that is, the path $\langle q_1,\ldots,q_m\rangle$ is contained in $\GG$.  
Since $\ant(j)\backslash\{j\}\subseteq C$, we know that $\pi$ is not of the form
\[
i\rightarrow q_1 - q_2 - \cdots - q_{k-1} \rightarrow q_k \rightarrow q_{k+1} \rightarrow \cdots \rightarrow q_m \rightarrow j.
\]
Hence, there must be some collider on $\pi$.  
Take the collider path $\langle q_{t-1},q_t,q_{t+1}\rangle$ on $\pi$ with $t$ maximum.  
Since $\pi$ is $m$-connecting then $q_t\in C\cup\an(C)$.  
Since every node in $W_\I$ is a source node, it must be that $q_t\in\ant(j)\backslash\{j\}$.  
Since $t$ is maximum, then $q_{t+1},\ldots,q_m$ are noncolliders on $\pi$, and hence not in $\ant(j)\setminus\{j\}$.  
Hence, the subpath $\langle q_{t+1},\ldots,q_m,j\rangle$ of $\pi$ must be of the form 
\[
q_{t+1}\leftarrow q_{t+2}\leftarrow \cdots \leftarrow j.
\]
It follows that $q_t\in\an(j)\setminus\{j\}$.  
This is because the node $q_t$ could only be in $\ant(j)\setminus(\an(j)\cup\{j\})$ if $q_{t+1}\in W_\I$, but this is impossible since we have already seen that $q_1,\ldots,q_m,j\in[p]$.  
As well, it also follows that the edge between $q_t$ and $q_{t+1}$ cannot be bidirected, as this would contradict the maximality of $t$.  
Therefore, the edge between $q_t$ and $q_{t+1}$ must be of the form $q_t\leftarrow q_{t+1}$.  
However, since $q_t\in\an(j)\setminus\{j\}$, it then follows that there is a directed cycle in $\GG$, which contradicts the assumption that $\GG$ is ancestral.  
Thus, we conclude that no such $m$-connecting path exists, completing the proof.
\end{proof}

Using Lemmas~\ref{lem: restriction} and ~\ref{lem: no additional intervening}, we get the following:

\begin{proposition}
\label{prop: commuting}
Let $\GG = ([p],E)$ be an AG, and $\I$ a collection of intervention targets that does not doubly-intervene on any selection adjacent nodes in $\GG$.  Then
$
\overline{\GG}^\I = \overline{\GG^\I}.
$
\end{proposition}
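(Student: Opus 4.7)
The plan is to verify the equality of $\overline{\GG}^\I$ and $\overline{\GG^\I}$ edge-by-edge. Both graphs have node set $[p] \cup W_\I$, so it suffices to partition the possible edges into three pieces: (i) those with both endpoints in $[p]$, (ii) those with one endpoint in $W_\I$ and one in $[p]$, and (iii) those with both endpoints in $W_\I$. Crucially, Lemma~\ref{lem: ribbonless} guarantees that $\GG^\I$ is ribbonless, so $\overline{\GG^\I}$ is well-defined.

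For piece (i), I would apply Lemma~\ref{lem: restriction} to obtain $\overline{\GG^\I}\langle[p]\rangle = \overline{\GG}$. On the other side, the interventional construction of Definition~\ref{def: interventional graph} leaves the induced subgraph on $[p]$ untouched, so by definition $\overline{\GG}^\I\langle[p]\rangle = \overline{\GG}$. For piece (ii), both graphs contain the edges $E_\I = \{\omega_I \to i : i \in I,\, I \in \I\setminus\{\emptyset\}\}$ inherited from $\GG^\I$; in $\overline{\GG}^\I$ these are the only edges between $W_\I$ and $[p]$ by the definition of the interventional construction, while in $\overline{\GG^\I}$ Lemma~\ref{lem: no additional intervening} guarantees that maximalization introduces no new edges between $W_\I$ and $[p]$. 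For piece (iii), $\overline{\GG}^\I$ contains no edges within $W_\I$ by construction; for $\overline{\GG^\I}$, any potential new edge between $\omega_I$ and $\omega_J$ would have to be bidirected and arise from an inducing path of length at least two in $\GG^\I$, whose non-endpoint vertices must be ancestors of $\omega_I$ or $\omega_J$. But every $\omega_K \in W_\I$ is a source in $\GG^\I$, so no such non-endpoint ancestor can exist, ruling out any inducing path of length $\geq 2$ with endpoints in $W_\I$.

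The proof is therefore mostly bookkeeping once the three preparatory lemmas are in place; the real technical weight is already absorbed by Lemmas~\ref{lem: restriction} and \ref{lem: no additional intervening}. The most delicate step to watch is piece (ii): one must be sure that the maximalization of $\GG^\I$ does not create ``long-range'' adjacencies between an intervention node $\omega_I$ and some ancestor of a target that is not itself in $I$, and it is precisely Lemma~\ref{lem: no additional intervening} — whose proof hinges on the fact that elements of $W_\I$ are sources and that $\GG$ is acyclic — that forbids this. The remaining cases are then immediate.
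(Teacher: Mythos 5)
Your proof is correct and follows essentially the same route as the paper's: both reduce the problem to Lemmas~\ref{lem: ribbonless}, \ref{lem: restriction}, and \ref{lem: no additional intervening}, and then exclude new adjacencies within $W_\I$ by noting that intervention nodes are sources, so no inducing path between them can have a non-endpoint vertex. The only cosmetic differences are that the paper invokes \cite[Proposition 2.6]{S12} for this last step where you argue directly from the definition of an inducing path, and your aside that such an edge ``would have to be bidirected'' is slightly off (an endpoint-identical edge between two sources would be undirected) but immaterial, since no inducing path exists at all.
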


\begin{proof}
By Lemmas~\ref{lem: restriction} and~\ref{lem: no additional intervening}, we know that the only possible adjacencies in $\overline{\GG^\I}$ that may not be in $\overline{\GG}^\I$ would be between nodes in $W_\I$.  
By Lemma~\ref{lem: ribbonless}, we know $\GG^\I$ is ribbonless.  
Hence, by \cite[Algorithm 2.2]{S12}, we know that $\overline{\GG^\I}$ is produced from $\GG^\I$ by adding endpoint identical edges between the endpoints of inducing paths in $\GG^\I$.  
Since every node in $W_\I$ is a source node in $\GG^\I$, we also know there are no arrowheads pointing to these nodes.  
Thus, by \cite[Proposition 2.6]{S12}, we know that there is no inducing path between nodes in $W_\I$.  
Thus, we conclude that $\overline{\GG}^\I = \overline{\GG^\I}.$
\end{proof}

Proposition~\ref{prop: commuting} tells us that if $\I$ is a collection of intervention targets that does not doubly-intervene on any selection adjacent nodes in either of two AGs $\GG$ and $\HH$, then $\GG$ and $\HH$ are $\I$-Markov equivalent if and only if $\overline{\GG}$ and $\overline{\HH}$ are $\I$-Markov equivalent.  
This is because we have
\begin{equation*}
\begin{split}
\overline{\HH}^\I = \overline{\HH^\I}\approx \HH^\I &\approx \GG^\I \approx \overline{\GG^\I} = \overline{\GG}^\I, \mbox{ and} \\ 
{\HH}^\I \approx \overline{\HH^\I} =  \overline{\HH}^\I &\approx \overline{\GG}^\I = \overline{\GG^\I} \approx {\GG}^\I.\\
\end{split}
\end{equation*}
Hence, it suffices to characterize $\I$-Markov equivalence when $\GG$ and $\HH$ are assumed to be MAGs.
Another immediate consequence of Proposition~\ref{prop: commuting} is the following:

\begin{lemma}
\label{lem: anterior graphs of interventions}
Suppose $\GG = ([p],E)$ is an AG and $\I$ a collection of intervention targets that does not doubly-intervene on any nodes of $\GG$. Then $\GG$ is maximal if and only if $(\GG^\I)^\ast$ is a MAG that is Markov equivalent to $\GG^\I$.
\end{lemma}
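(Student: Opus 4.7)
The plan is to apply two reductions at the outset. By the hypothesis that $\I$ does not doubly-intervene on any node of $\GG$, Lemma~\ref{lem: ribbonless} gives that $\GG^\I$ is ribbonless, and then Lemma~\ref{lem: RG anterior graph} yields $\J(\GG^\I) = \J((\GG^\I)^\ast)$ automatically; the anterior operation also preserves the skeleton by construction, since it only converts arrowheads into tails. Thus the Markov-equivalence clause in the lemma is free, and the remaining content is the biconditional ``$\GG$ is maximal if and only if $(\GG^\I)^\ast$ is a MAG.''

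For the forward direction, assume $\GG$ is maximal. I would first check ancestrality of $(\GG^\I)^\ast$. Conditions (1) and (2) of the AG definition follow easily from the corresponding properties of $\GG$, using that every $W_\I$-node is a source and that all bidirected edges in $\GG^\I$ come from $\GG$ (so any putative directed cycle, or any witnessing directed path between the endpoints of a bidirected edge, would already exist in $\GG$). For condition (3), the only way a selection-adjacent vertex $i$ of $\GG$ can acquire arrowheads in $\GG^\I$ is through edges $\omega_I\to i$, each of which the anterior operation converts into $\omega_I - i$, and the no-doubly-intervention assumption prevents any second $W_\I$-parent of $i$ from surviving with an arrowhead. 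For maximality, I would combine Proposition~\ref{prop: commuting} with $\GG=\overline{\GG}$ to obtain $\overline{\GG^\I}=\overline{\GG}^\I=\GG^\I$, so $\GG^\I$ is maximal; since $(\GG^\I)^\ast$ has the same skeleton and the same $\J$-content as $\GG^\I$, Lemma~\ref{lem: maximal graph} transfers maximality to $(\GG^\I)^\ast$, making it a MAG.

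For the reverse direction, if $(\GG^\I)^\ast$ is a MAG then it is maximal; the skeleton equality and $\J((\GG^\I)^\ast)=\J(\GG^\I)$ force $\GG^\I$ itself to be maximal via Lemma~\ref{lem: maximal graph}. To contradict a hypothetical failure of maximality of $\GG$, I would take an inducing path $\pi$ between nonadjacent $i,j\in[p]$ in $\GG$ and argue that $\pi$ lifts verbatim to an inducing path in $\GG^\I$: its non-endpoints lie in $[p]$, where $\GG$ and $\GG^\I$ agree edge-for-edge, so each remains a collider, and any witnessing directed path from a non-endpoint to $i$ or $j$ in $\GG$ sits inside $\GG^\I$ unchanged; moreover $i$ and $j$ remain nonadjacent in $\GG^\I$ because the intervention only adds edges incident to $W_\I$. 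This contradicts maximality of $\GG^\I$.

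The main obstacle is the ancestrality verification of $(\GG^\I)^\ast$ in the forward direction: it requires tracing how the anterior operation interacts with the newly added arrows $\omega_I\to i$, particularly at selection-adjacent vertices of $\GG$, and it is precisely here that the no-doubly-intervention hypothesis does real work by guaranteeing that no stray arrowhead at such a vertex escapes the anterior cleanup.
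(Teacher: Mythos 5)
Your proposal is correct, and its forward direction follows the paper's proof closely: both obtain the Markov equivalence from Lemma~\ref{lem: ribbonless} together with Lemma~\ref{lem: RG anterior graph}, and both derive maximality of $\GG^\I$ from Proposition~\ref{prop: commuting} applied to $\GG=\overline{\GG}$. Where you diverge is in how maximality is transferred and in the converse. The paper passes maximality from $\GG^\I$ to $(\GG^\I)^\ast$ by arguing that removing arrowheads at selection-adjacent nodes creates no new inducing paths, and in the converse it argues that redirecting $\omega_I - j$ back to $\omega_I\rightarrow j$ creates no new colliders, invokes \cite[Theorem 2]{SL14} to conclude $\overline{\GG^\I}=\GG^\I$, and then reads off $\GG=\overline{\GG}$ from Proposition~\ref{prop: commuting}. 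You instead transfer maximality in both directions via Lemma~\ref{lem: maximal graph}, using only that the anterior operation preserves the skeleton and the independence model --- which is cleaner and sidesteps the inducing-path bookkeeping --- and you close the converse by lifting a hypothetical inducing path between nonadjacent $i,j\in[p]$ from $\GG$ into $\GG^\I$ verbatim. That lift is sound (the induced subgraph on $[p]$ and the ancestor relations among $[p]$ are unchanged, since $W_\I$ consists of sources), and it needs only \cite[Theorem 5.1]{RS02} to produce the inducing path and the standard fact that an inducing path between nonadjacent vertices obstructs separation; both are facts the paper already uses elsewhere. So your converse avoids Proposition~\ref{prop: commuting} entirely, at the cost of invoking the inducing-path characterization of maximality directly.

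One point of emphasis is off, though it does not break the proof: you locate the ``real work'' of the no-doubly-intervention hypothesis in the ancestrality check, saying it prevents a stray arrowhead from escaping the anterior cleanup. In fact the anterior operation removes \emph{every} arrowhead at a selection-adjacent vertex, however many $W_\I$-parents it has; what a double intervention $\omega_I\rightarrow i\leftarrow\omega_J$ at $i\in\sa(\GG)$ actually destroys is the \emph{Markov equivalence} $\J(\GG^\I)=\J((\GG^\I)^\ast)$, since the collider at $i$ becomes the non-collider $\omega_I - i - \omega_J$ and Lemma~\ref{lem: RG anterior graph} no longer applies ($\GG^\I$ is no longer ribbonless). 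Your proof is unaffected because you correctly route the equivalence claim through Lemma~\ref{lem: ribbonless} at the outset, but the closing commentary misattributes the role of the hypothesis.
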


\begin{proof}
First assume that $\GG$ is maximal; i.e., that $\GG$ is a MAG.
By Lemma~\ref{lem: ribbonless}, we know that $\GG^\I$ is ribbonless.  
By \cite[Proposition 1]{SL14}, we know that $\GG^\I$ is Markov equivalent to $(\GG^\I)^\ast$.  
To see that $(\GG^\I)^\ast$ is a MAG, notice by Proposition~\ref{prop: commuting}, $\GG^\I$ is maximal since
$\GG = \overline{\GG}$ implies $\GG^\I = \overline{\GG}^\I = \overline{\GG^\I}$.
It follows that $(\GG^\I)^\ast$ is also maximal, since removing arrowheads pointing into selection adjacent nodes cannot generate new inducing paths.  
Moreover, since $\GG$ is a MAG, then $\GG^\I$ contains no directed cycles nor bidirected edges with a directed path between its endpoints.  
Hence, neither does $(\GG^\I)^\ast$.  
Thus, $(\GG^\I)^\ast$ is a MAG. 

Conversely, suppose that $(\GG^\I)^\ast$ is a MAG that is Markov equivalent to $\GG^\I$.
Since $\GG$ is ancestral and $\I$ does not doubly-intervene on any selection adjacent nodes in $\GG$, we know that $\GG^\I$ is produced from $(\GG^\I)^\ast$ by replacing undirected edges of the form $\omega_I - j$ for some $\omega_I\in W_\I$ and $j\in\sa(\GG)$ a selection adjacent node, with directed arrows $\omega_I \rightarrow j$.  
Since $\GG$ is ancestral and $\I$ does not doubly-intervene on any selection adjacent nodes, doing so cannot produce any new colliders, and hence $\GG^\I$ does not containing any inducing paths that are not in $(\GG^\I)^\ast$.  Since, by Lemma~\ref{lem: ribbonless}, $\GG^\I$ is ribbonless, it follows from \cite[Theorem 2]{SL14} that $\GG^\I$ is maximal; i.e., $\overline{\GG^\I} = \GG^\I$. 
So by Proposition~\ref{prop: commuting}, 
$
\GG^\I = \overline{\GG^\I} = \overline{\GG}^\I.
$
Thus, $\GG = \overline{\GG}$; i.e., $\GG$ is maximal.
\end{proof}

\subsection{Proof of Theorem~\ref{thm: main}} 

In order to prove Theorem~\ref{thm: main}, we will use the following lemma.
\begin{lemma}
\label{lem: I-colliders with order}
Let $\langle i,j,k\rangle$ be an $\I$-collider with order $t$ in a LMG $\GG$ for a collection of interventions $\I$.  
Then there is a discriminating path 
$
\pi = \langle v_0,\ldots, v_m,b,c\rangle
$
in $\GG^\I$ for which
\begin{enumerate}
	\item  $\langle i,j,k\rangle \in\{\langle v_m,b,c\rangle ,\langle c,b,v_m\rangle \}$,
	\item  the subpaths $\langle v_0,v_1,v_2\rangle, \langle v_1,v_2,v_3\rangle, \ldots, \langle v_{m-1},v_m,b\rangle $ are all colliders with order less than or equal to $t$, and
	\item $\langle v_0,v_1,v_2\rangle$ is an $\I$-v-structure in $\GG^\I$.  
\end{enumerate}
\end{lemma}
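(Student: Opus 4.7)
The plan is to proceed by strong induction on the order $t$ of the $\I$-collider $\langle i,j,k\rangle$. The base case $t=0$ is immediate: an $\I$-collider of order $0$ is, by definition, an $\I$-v-structure, so the triple $\langle i,j,k\rangle$ itself serves as a degenerate discriminating path with $\langle v_0,v_1,v_2\rangle := \langle i,j,k\rangle$, and all three conditions are trivially satisfied.

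For the inductive step, I would fix any discriminating path $\pi' = \langle v_0, v_1, \ldots, v_m, b, c\rangle$ in $\GG^\I$ witnessing that $\langle v_m, b, c\rangle = \langle i,j,k\rangle$ is a collider of order $t$. The key preliminary observation is a location constraint on the intervention nodes on $\pi'$: since each of $v_1,\ldots,v_m$ and $b$ is a collider on $\pi'$, each carries an incoming arrowhead, and because the vertices of $W_\I$ are sources in $\GG^\I$, none of these can belong to $W_\I$. Similarly, the edge $v_m \to c$ forces $c\in[p]$. Thus only the initial vertex $v_0$ can possibly lie in $W_\I$.

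I would then split into two cases. In the favorable case $v_0 \in W_\I$, I would argue the first subtriple $\langle v_0, v_1, v_2\rangle$ is automatically an $\I$-v-structure: if instead $v_0$ and $v_2$ were adjacent, then this subtriple would be a collider of order at least one and would require its own discriminating path in $\GG^\I$. But any such path would force $v_0$ either to be an internal collider vertex (impossible for a source) or to be the endpoint analogous to $c$ equipped with parents (also impossible for a source), producing a contradiction. In the remaining case $v_0 \in [p]$, the entire path $\pi'$ lies in $\GG$, and if every subtriple of $\pi'$ had its order realized inside $\GG$ (i.e., none were $\I$-colliders), then $\pi'$ would witness $\langle i,j,k\rangle$ as a collider with order in $\GG$, contradicting its $\I$-collider status. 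Hence some subtriple $T_{s^*}$ is itself an $\I$-collider of order strictly less than $t$, and the inductive hypothesis supplies a discriminating path $\pi^*$ for $T_{s^*}$ whose first subtriple is an $\I$-v-structure. I would then splice $\pi^*$ with the tail of $\pi'$ from $T_{s^*}$ onward, reducing to the previous case.

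The main obstacle is the splicing step in the second case: the definition of a discriminating path demands that every interior vertex be a parent of the endpoint $c$, yet the interior vertices inherited from $\pi^*$ are guaranteed only to be parents of the endpoint of $\pi^*$, which is a vertex of $T_{s^*}$ rather than $c$ itself. Making this splice legitimate will likely require choosing $T_{s^*}$ with care (for instance, as the rightmost $\I$-collider subtriple, so that the freshly-inserted vertices sit as close to $c$ as possible) and exploiting the transitive ancestral information furnished by the discriminating-path structure (every interior vertex of $\pi^*$ is an ancestor of $c$), together with the maximality properties of $\overline{\GG}^\I$ developed in Section~\ref{sec: I-Markov equivalence for MAGs} to upgrade ancestor relations to parent relations where needed.
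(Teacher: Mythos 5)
Your proposal takes a genuinely different route from the paper and, as you yourself flag, it does not close. The paper's proof is not an induction on $t$: it fixes a discriminating path $\pi=\langle v_0,\ldots,v_m,b,c\rangle$ in $\GG^\I$ witnessing the collider with order, asserts that $\pi$ must contain a vertex of $W_\I$ (on the grounds that no such witnessing path exists in $\GG$), and then uses the fact that every vertex of $W_\I$ is a source to pin that vertex down: $v_1,\ldots,v_m,b$ are colliders on $\pi$, and $c$ has the parents $v_1,\ldots,v_m$ when $m\neq 0$, so the $W_\I$ vertex is $c$ only in the degenerate case $m=0$ (which is exactly the $\I$-v-structure case) and is $v_0$ otherwise. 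Finally it shows $\langle v_0,v_1,v_2\rangle$ cannot have positive order: a positive-order collider sits at the end of some discriminating path $\langle q_0,\ldots,q_n,x,y\rangle$ as either $q_n$ or $y$, but $q_n$ is a collider on that path and $y$ has the parents $q_1,\ldots,q_n$, so neither can equal the source node $v_0\in W_\I$. Your Case 1 is essentially this last contradiction argument and is fine.

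The gap is your Case 2. If the witnessing path lies entirely in $[p]$, you correctly observe that some subtriple must be an $\I$-collider with order (otherwise the same path would witness the order in $\GG$), but the repair you sketch cannot work as described: the interior vertices $u_1,\ldots,u_n$ of the discriminating path $\pi^*$ supplied by the inductive hypothesis are required to be parents of the \emph{terminal vertex of $\pi^*$}, which is a vertex of the subtriple $T_{s^*}$, not parents of $c$; a discriminating path for $\langle i,j,k\rangle$ demands parents of $c$. Choosing the rightmost $\I$-collider subtriple does not change this, ancestry of $c$ does not upgrade to parenthood of $c$, and passing to a maximal completion only adds edges between endpoints of inducing paths (and only with prescribed endpoint marks), so it cannot be relied on to supply the missing parent edges. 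As written, the inductive step is therefore incomplete, and the lemma's conclusion for $\langle i,j,k\rangle$ itself is never reached in this case. (It is worth noting that your Case 2 is precisely the configuration the paper excludes with its opening assertion that the witnessing path must meet $W_\I$; if you want an inductive proof you would need to show directly that a witnessing path contained in $[p]$ cannot have an $\I$-collider subtriple, rather than trying to splice discriminating paths.)
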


\begin{proof}
Let $\langle i,j,k\rangle$ be an $\I$-collider with order $t$ in $\GG$.  
Then $\langle i,j,k\rangle$ is not a collider with order in $\GG$.  
So, by \cite[Definition 3.11]{ARS09}, there does not exist a discriminating path $\pi = \langle v_0,\ldots, v_m,b,c\rangle$ in $\GG$ satisfying properties (1) and (2).  
However, since $\langle i,j,k \rangle$ is a collider with order in $\GG^\I$, it follows that there exists such a discriminating path $\pi$ in $\GG^\I$.  
Therefore, it must be that at least one of the vertices $v_0,\ldots,v_m,b,c$ is an element of $W_\I$.  
Supposing that $m\neq 0$, it follows that $v_1,\ldots,v_m\notin W_\I$ since $W_\I$ contains only source nodes.  
Similarly, since (without loss of generality) $\langle v_m,b,c\rangle = \langle i,j,k\rangle$ , which is a collider, it follows that at least one of $v_0$ and $c$ is in $W_\I$.  
Since $\pi$ is a discriminating path, it follows that $v_1,\ldots, v_m$ are all parents of $c$ in the case that $m\neq0$.  
Since all nodes in $W_\I$ are source nodes, it follows that $c\in W_\I$ if and only if $m=0$ and $\langle i,j,k\rangle$ is an $\I$-v-structure.  
In this case, we have that $\langle v_0,b,c\rangle = \langle i,j,k\rangle$, as desired.  
On the other hand, if $c\notin W_\I$, it must be that $v_0\in W_\I$.  
By \cite[Definition 3.11]{ARS09}, it follows that $\langle v_0,v_1,v_2\rangle$ is a collider with order at most $t$.  
If $\langle v_0,v_1,v_2\rangle$ is a collider with order $0$, it is an $\I$-v-structure and we are done.  

So suppose, for the sake of contradiction, that $\langle v_0,v_1,v_2\rangle$ has order $i>0$.  
Then, by \cite[Definition 3.11]{ARS09}, $\langle v_0,v_1,v_2\rangle$ is not a v-structure, and there exists a discriminating path $\langle q_0,\ldots, q_n,x,y\rangle$ in $\GG^\I$ for which $\langle v_0,v_1,v_2\rangle\in\{ \langle q_n,x,y\rangle, \langle y,x,q_n\rangle\}$.  
By definition of a discriminating path, it follows that all of $q_1,\ldots, q_n$ are parents of $y$.  
Since, $\langle v_0,v_1,v_2\rangle$ is not a v-structure, it must be that $m\neq 0$, and thus, $y$ has at least one parent.  
Hence, $y\notin W_\I$, as $W_\I$ contains only source nodes.  
Similarly, since $m\neq0$, then, by \cite[Definition 3.11]{ARS09}, it must be that $\langle q_{n-1},q_{n}, x\rangle$ is a collider in $\GG^\I$.  
However, this implies that $q_n$ is not a source node, and therefore $q_n\notin W_\I$.  
This yields a contradiction to the assumption that $i>0$.  
Thus, we conclude that $\langle v_0,v_1,v_2\rangle$ is a collider with order $0$, and therefore it is an $\I$-v-structure in $\GG^\I$.  
\end{proof}

\subsubsection{ Proof of Theorem~\ref{thm: main}}
The equivalence of (1) and (2) is an immediate consequence of Proposition~\ref{prop: commuting}.
So it only remains to show (2) if and only if (3). 
For notational simplicity, in the remainder of this proof we assume $\GG$ and $\HH$ are MAGs.
Suppose first that $\GG$ and $\HH$ are $\I$-Markov equivalent.  
By Lemma~\ref{lem: ribbonless}, we know that $\GG^\I$ and $\HH^\I$ are ribbonless graphs satisfying $\J(\GG^\I) = \J(\HH^\I)$.
Hence, $\J(\GG) = \J(\HH)$, and so it only remains to show that $\GG^\I$ and $\HH^\I$ have the same $\I$-colliders with order.  

Since $\GG$ and $\HH$ are maximal, by Lemma~\ref{lem: anterior graphs of interventions}, we know that we have that 
\[
\J((\GG^\I)^\ast) = \J(\GG^\I) = \J(\HH^\I) = \J((\HH^\I)^\ast),
\]
and $(\GG^\I)^\ast$ and $(\HH^\I)^\ast$ are MAGs.  
Therefore, $(\GG^\I)^\ast$ and $(\HH^\I)^\ast$ are Markov equivalent MAGs, and by \cite[Theorem 3.7]{ARS09}, they have the same colliders with order.
Suppose now that $\langle i,j,k\rangle$ is an $\I$-collider with order $t$ in $\GG^\I$.  
Then, by Lemma~\ref{lem: I-colliders with order} there is a discriminating path $\pi = \langle v_0,\ldots, v_m,b,c\rangle$ in $\GG^\I$ for which $\langle i,j,k\rangle \in\{\langle v_m,b,c\rangle ,\langle c,b,v_m\rangle \}$,  the subpaths $\langle v_0,v_1,v_2\rangle, \langle v_1,v_2,v_3\rangle, \ldots, \langle v_{m-1},v_m,b\rangle $ are all colliders with order less than or equal to $t$, and $\langle v_0,v_1,v_2\rangle$ is an $\I$-v-structure in $\GG^\I$. 
It follows that $v_0 = \omega_I$ for some $I\in\I$ and $v_1\in I$.  
Moreover, since $\langle v_0,v_1,v_2\rangle$ is a v-structure, the edge in $\GG^\I$ between $v_1$ and $v_2$ is of the form $v_1\qleftarrow v_2$.  
Hence, since $\GG$ is an ancestral graph, it must be that there are no arrows of the form $v_1\qleftarrow v$ in $\GG^\I$.  
Therefore, the arrow $v_0\rightarrow v_1$ is also an arrow in $(\GG^\I)^\ast$.  
Since the only difference between $(\GG^\I)^\ast$ and $\GG^\I$ is that some arrows from nodes in $W_\I$ are undirected as opposed to directed, it follows that $(\GG^\I)^\ast$ and $\GG^\I$ have identical discriminating paths.  

It follows that if $\langle i,j,k\rangle$ is an $\I$-collider with order $t$ in $\GG^\I$, then it is a collider with order in $(\GG^\I)^\ast$.  
Since $(\GG^\I)^\ast$ and $(\HH^\I)^\ast$ are Markov equivalent MAGs, it follows from \cite[Theorem 3.7]{ARS09} that $\langle i,j,k\rangle$ is also a collider with order in $(\HH^\I)^\ast$.  
Since, as we argued above, $\HH^\I$ and $(\HH^\I)^\ast$ have the same discriminating paths for colliders with order, we know that $\langle i,j,k\rangle$ is a collider with order in $\HH^\I$.  
Therefore, it only remains to see that $\langle i,j,k\rangle$ is further an $\I$-collider with order in $\HH^\I$.  

Suppose, on the contrary, that $\langle i,j,k\rangle$ is not an $\I$-collider with order in $\HH^\I$. 
Then $\langle i,j,k\rangle$ is a collider with order determined by a discriminating path that does not use any nodes in $W_\I$.  
Since $\GG$ and $\HH$ are Markov equivalent MAGs, it must be that $\langle i,j,k\rangle$ is also a collider with order in $\GG$.  
However, this contradicts the assumption that $\langle i,j,k\rangle$ is an $\I$-collider with order in $\GG^\I$.  
Hence, $\langle i,j,k\rangle$ must be a collider with order in $\HH^\I$ determined only by discriminating paths that use at least one node in $W_\I$.  
Hence, $\langle i,j,k\rangle$ is an $\I$-collider in $\HH^\I$.

To see the converse, suppose now that $\GG$ and $\HH$ are Markov equivalent MAGs and that $\GG^\I$ and $\HH^\I$ have the same $\I$-colliders with order. 
Since $\GG$ and $\HH$ are Markov equivalent MAGs, and since $\I$ does not doubly-intervene at any selection adjacent nodes in either of $\GG$ or $\HH$, then by Lemma~\ref{lem: anterior graphs of interventions}, we know that 
\[
\J((\GG^\I)^\ast) = \J(\GG^\I),
\qquad
\J((\HH^\I)^\ast) = \J(\HH^\I),
\]
and that $(\GG^\I)^\ast$ and $(\HH^\I)^\ast$ are MAGs.
Hence, it suffices to show that $\J((\GG^\I)^\ast) = \J((\HH^\I)^\ast)$.
Since $(\GG^\I)^\ast$ and $(\HH^\I)^\ast$ are MAGs, by \cite[Theorem 3.7]{ARS09}, this can be done by showing that they have the same colliders with order.

Since $(\GG^\I)^\ast$ and $\GG^\I$ differ only by replacing some undirected edges in $(\GG^\I)^\ast$ of the form $\omega_I - i$ for some $I\in\I$ and $i\in I$ with the directed arrows $\omega_I\rightarrow i$, it follows that any discriminating path in $(\GG^\I)^\ast$ is also a discriminating path in $\GG^\I$.  
In particular, all colliders with order in $(\GG^\I)^\ast$ all colliders with order in $(\GG^\I)^\ast$ are also colliders with order in $\GG^\I$. 

Let $\langle i,j,k\rangle$ be a collider with order in $(\GG^\I)^\ast$.
Then it is a collider with order in $\GG^\I$ and hence either a collider with order in $\GG$ or an $\I$-collider with order in $\GG^\I$. 
In the former case, since $\GG$ and $\HH$ are Markov equivalent MAGs, it follows that $\langle i,j,k\rangle$ is also a collider with order in $\HH$.  
In the latter case, since $\GG^\I$ and $\HH^\I$ have the same $\I$-colliders with order, it follows that $\langle i,j,k\rangle$ is also an $\I$-collider with order in $\HH^\I$.  
Since, as we saw in the proof of the forward direction, any discriminating path for a collider with order in $\HH^\I$ is also a discriminating path for a collider with order in $(\HH^\I)^\ast$, it follows that $\langle i,j,k\rangle$ is a collider with order in $(\HH^\I)^\ast$.  
By symmetry of the argument, it follows that $(\GG^\I)^\ast$ and $(\HH^\I)^\ast$ are two MAGs with the same colliders with order.  
Hence, $(\GG^\I)^\ast$ and $(\HH^\I)^\ast$ are Markov equivalent.
It follows that
\[
\J(\GG^\I) = \J((\GG^\I)^\ast) = \J((\HH^\I)^\ast) = \J(\HH^\I), 
\]
and so $\GG$ and $\HH$ are $\I$-Markov equivalent, which completes the proof. 
\hfill$\square$

\section{Lemmata and Proofs for Section~\ref{sec: factorization criteria}} 

The following lemma demonstrates that the collection $\M_\I(\GG)$ constitutes all interventional settings that can arise by intervening at the the targets $\I$ in a distribution Markov to an ADMG $\GG$.
Its proof is analogous to that of \cite[Lemma A.1]{YKU18}.
\begin{lemma}
\label{lem: equating collections}
Suppose $\emptyset\in\I$ and $\GG$ is an ADMG.  
Then $(f^{(I)})_{I\in\I}\in\M_\I(\GG)$ if and only if there exists $f^{(\emptyset)}\in\M(\GG)$ such that for all $I\in\I$, $f^{(I)}$ factorizes according to equation~\eqref{eqn: ADMG interventional factorization}.
\end{lemma}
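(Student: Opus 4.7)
The argument follows the template of \cite[Lemma A.1]{YKU18} for DAGs, with Richardson's head--tail factorization (Theorem~\ref{thm: richardson factorization}) replacing the standard recursive DAG factorization used there.

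For the forward direction, assume $(f^{(I)})_{I\in\I}\in\M_\I(\GG)$. Since $\emptyset\in\I$, the density $f^{(\emptyset)}$ lies in $\M(\GG)$. For each $I\in\I$ and each $A\in\A(\GG)$, Theorem~\ref{thm: richardson factorization} applied to $f^{(I)}\in\M(\GG)$ gives
\[
f^{(I)}(x_A) = \prod_{H\in[A]_\GG} f^{(I)}(x_H\mid x_{\tail(H)}).
\]
The defining invariance of $\M_\I(\GG)$, specialized to $J=\emptyset$, yields $f^{(I)}(x_H\mid x_{\tail(H)}) = f^{(\emptyset)}(x_H\mid x_{\tail(H)})$ for every $H\in[A]_\GG$ with $H\cap I=\emptyset$. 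Substituting these invariant factors into the display transforms it into equation~\eqref{eqn: ADMG interventional factorization}.

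For the backward direction, assume $f^{(\emptyset)}\in\M(\GG)$ and that each $f^{(I)}$ satisfies~\eqref{eqn: ADMG interventional factorization} for every $A\in\A(\GG)$. The crux is the claim that for every head $H\in[A]_\GG$ with $H\cap I=\emptyset$ one has $f^{(I)}(x_H\mid x_{\tail(H)}) = f^{(\emptyset)}(x_H\mid x_{\tail(H)})$. Granting this, equation~\eqref{eqn: ADMG interventional factorization} becomes the Richardson factorization $f^{(I)}(x_A) = \prod_{H\in[A]_\GG} f^{(I)}(x_H\mid x_{\tail(H)})$ for every $A\in\A(\GG)$, so Theorem~\ref{thm: richardson factorization} yields $f^{(I)}\in\M(\GG)$. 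The invariance condition defining $\M_\I(\GG)$ then follows immediately: for $I,J\in\I$ and $H\cap(I\cup J)=\emptyset$, applying the claim to both $I$ and $J$ produces the chain $f^{(I)}(x_H\mid x_{\tail(H)}) = f^{(\emptyset)}(x_H\mid x_{\tail(H)}) = f^{(J)}(x_H\mid x_{\tail(H)})$.

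The claim itself is a uniqueness statement for the Richardson factorization: the head--tail conditionals appearing in any valid factorization of a density must coincide with the genuine conditionals of that density. In the DAG case of \cite[Lemma A.1]{YKU18} this is the standard uniqueness of a recursive factorization, verified by marginalizing along a topological order. In the ADMG setting, the heads $H\in[A]_\GG$ satisfy the rigidity property $H = \barren_\GG(\an_\GG(H))$, which pins down a canonical ancestrally closed environment $\an_\GG(H)$ from which $f(x_H\mid x_{\tail(H)})$ can be extracted via marginalization; performing this extraction using the hypothesized factorization~\eqref{eqn: ADMG interventional factorization} for $f^{(I)}$ and Theorem~\ref{thm: richardson factorization} applied to $f^{(\emptyset)}$ forces the two conditionals to agree whenever the head in question does not intersect $I$. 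The principal technical obstacle is this head--tail uniqueness step: because heads are generally not singletons, the extraction proceeds block by block rather than node by node, and must be carried out by induction on $|A|$ that peels off downstream heads from $A$ while preserving ancestral closure. The required rigidity is supplied by Theorem~\ref{thm: richardson factorization} for the known Markov density $f^{(\emptyset)}$, which identifies each non-intervened factor unambiguously.
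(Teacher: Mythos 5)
Your proposal is correct and follows essentially the same route as the paper's proof: one direction applies Theorem~\ref{thm: richardson factorization} to $f^{(I)}$ and substitutes the invariance condition of $\M_\I(\GG)$ with $J=\emptyset$, while the other reads membership in $\M_\I(\GG)$ off equation~\eqref{eqn: ADMG interventional factorization}. The only real differences are that you explicitly isolate (and sketch) the uniqueness of the head--tail conditionals in a valid factorization—a step the paper treats as immediate when it concludes $f^{(I)}\in\M(\GG)$ and identifies the factors of~\eqref{eqn: ADMG interventional factorization} with the genuine conditionals of $f^{(I)}$—and that the paper constructs $f^{(\emptyset)}$ head-by-head via a choice of $I_H\in\I$ with $I_H\cap H=\emptyset$, which under the hypothesis $\emptyset\in\I$ collapses to your direct choice of the observational density from the sequence.
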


\begin{proof}
Suppose first that there exists an $f^{(\emptyset)}\in\M(\GG)$ such that for all $I\in\I$, $f^{(I)}$ factorizes according to equation~\eqref{eqn: ADMG interventional factorization}.
It is then immediate from Theorem~\ref{thm: richardson factorization} that $f^{(I)}\in\M(\GG)$ for all $I\in\I$.  
It then remains to check that for all $A\in \A(\GG)$ that $f^{(I)}(x_H\mid x_{\tail(H)}) = f^{(J)}(x_H \mid x_{\tail(H)})$ for all $H\in[A]_\GG$ such that $H\cap(I\cup J) = \emptyset$.  
To see this, note that since $H\cap(I\cup J) = \emptyset$, then $H\cap I = \emptyset$ and $H\cap J = \emptyset$.  
So by equation~\eqref{eqn: ADMG interventional factorization}, 
 \[
 f^{(I)}(x_H\mid x_{\tail(H)}) = f^{(\emptyset)}(x_H\mid x_{\tail(H)}) = f^{(J)}(x_H\mid x_{\tail(H)}). 
 \]
 Hence, $(f^{(I)})_{I\in\I}\in\M_\I(\GG)$.  
 
 Conversely, suppose that $(f^{(I)})_{I\in\I}\in\M_\I(\GG)$.  
 We need to find $f^{(\emptyset)}$ such that $f^{(\emptyset)}\in\M(\GG)$ and for all $I\in\I$, $f^{(I)}$ factorizes according to equation~\eqref{eqn: ADMG interventional factorization}.  
To this end, for $A\in\A(\GG)$ and $H\in[A]_\GG$, pick (if it exists) $I_H\in\I$ such that $I_H\cap H = \emptyset$, and set
 \[
 f^{(\emptyset)}(x_H\mid x_{\tail(H)}) := f^{(I_H)}(x_H\mid x_{\tail(H)}). 
 \]
 Note that this choice is well-defined by the second defining condition of $\M_\I(\GG)$ and the fact that this choice is independent of $A$.  
On the other hand, if no such $I_H$ exists, then set $f^{(\emptyset)}(x_H \mid x_{\tail(H)})$ equal to any strictly positive density.  
Since this operation is well-defined, we get a distribution with density function $f^{(\emptyset)}$ such that for all $A\in\A(\GG)$, 
\[
f^{(\emptyset)}(x_A) = \prod_{H\in[A]_\GG}f^{(\emptyset)}(x_H \mid x_{\tail(H)}).
\]
Hence, by Theorem~\ref{thm: richardson factorization}, $f^{(\emptyset)}\in\M(\GG)$.  
Moreover, for all $I\in\I$ and $A\in \A(\GG)$, 
\begin{equation*}
\begin{split}
f^{(I)}(x_A)
&= \prod_{H\in[A]_\GG}f^{(I)}(x_H \mid x_{\tail(H)}), \\
&= \prod_{H\in[A]_\GG : I\cap H \neq \emptyset}f^{(I)}(x_H \mid x_{\tail(H)})\prod_{H\in[A]_\GG : I\cap H = \emptyset}f^{(I)}(x_H \mid x_{\tail(H)}),\\
&= \prod_{H\in[A]_\GG : I\cap H \neq \emptyset}f^{(I)}(x_H \mid x_{\tail(H)})\prod_{H\in[A]_\GG : I\cap H = \emptyset}f^{(\emptyset)}(x_H \mid x_{\tail(H)}).\\
\end{split}
\end{equation*}
Hence, $f^{(I)}$ factorizes according to equation~\eqref{eqn: ADMG interventional factorization}.
\end{proof}

In \cite[Proposition 3.8]{YKU18}, the authors showed that for a DAG $\GG$, if $\emptyset\in\I$, then $(f^{(I)})_{I\in\I}\in\M_\I(\GG)$ if and only if $(f^{(I)})_{I\in\I}$ satisfies the $\I$-Markov property with respect to $\GG^\I$.  
Key to their proof is to show if $(f^{(I)})_{I\in\I}\in\M_\I(\GG)$, then the conditional factors in
\[
f^{(I)} = \prod_{i\in[p]}f^{(I)}(x_i\mid x_{\pa_\GG(i)})
\]
can be partitioned in such a way that, upon marginalization and conditioning, we can recover property 2 of Definition~\ref{def: ADMG local I-Markov properties}.  
This argument generalizes to ADMGs with the factorization given in Theorem~\ref{thm: richardson factorization}.  
However, it requires that we take a few extra steps to account for the fact that the heads $H\in[A]_\GG$ are no longer necessarily singletons.
In particular, we require the following lemma.

\begin{lemma}
\label{lem: partitioning}
Suppose $(f^{(I)})_{I\in\I}\in\M_\I(\GG)$, we have two sets $B,C\in[p]$, and $I\in\I$ such that $B$ is $m$-separated from $\omega_I$ given $C\cup W_\I\setminus\{\omega_I\}$ in $\GG^\I$.  
Let $A:=\an_\GG(B\cup C)\in\A(\GG)$.  
Let $Z \subset A$ denote all vertices in $A$ that are $m$-connected to $\omega_I$ given $C\cup W_\I\setminus\{\omega_I\}$ in $\GG^\I$, and let $B^\prime := A\setminus(Z\cup C)$.  
Finally, let
\begin{equation*}
\begin{split}
C^\prime &:=\{ i\in C \mid (\pa_\GG(i)\cup\sp_\GG(i))\cap B^\prime\neq\emptyset\}, \mbox{ and}\\
C^{\prime\prime} &:=\{ i\in C \mid (\pa_\GG(i)\cup\sp_\GG(i))\cap B^\prime=\emptyset\}.\\
\end{split}
\end{equation*}
Then
	\begin{enumerate}[1.]
		\item No district in $\GG$ contains both a node of $B^\prime$ and a node of $Z$.
		\item No district in $\GG$ contains both a node of $C^\prime$ and a node of $Z$.
	\end{enumerate}
\end{lemma}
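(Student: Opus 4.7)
Proof proposal.

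Both parts will be proved by contradiction via a path-concatenation argument in $\GG^\I$. In each case, starting from the assumption that a district of $\GG$ meets both an ``$m$-separated'' set ($B'$ or $C'$) and the ``$m$-connected'' set $Z$, I will produce an $m$-connecting path from a $B'$-vertex to $\omega_I$ given $S := C \cup W_\I \setminus \{\omega_I\}$ in $\GG^\I$, contradicting the fact that $B' \subseteq A \setminus (Z\cup C)$ is by construction $m$-separated from $\omega_I$ given $S$.

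For (1), assume $D \in \dis(\GG)$ contains some $b \in B'$ and some $z\in Z$. Pick a shortest bidirected path $\pi_{bz} = \langle b = u_0, \ldots, u_n = z\rangle$ inside $D$ and an $m$-connecting path $\pi_{z\omega}$ from $z$ to $\omega_I$ given $S$ in $\GG^\I$, guaranteed by $z \in Z$. The walk obtained by concatenating these two, shortcutted to a path at the first intersection if needed, runs from $b$ to $\omega_I$; the plan is to verify it is $m$-connecting given $S$, placing $b \in Z$ and contradicting $b \in B'$. The verification reduces to showing every internal collider on the walk lies in $S \cup \an_{\GG^\I}(S) = C \cup \an_\GG(C) \cup W_\I\setminus\{\omega_I\}$. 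Each $u_i$ with $1 \le i \le n-1$ is a collider (both adjacent edges are bidirected), and since $u_i \in [p]$ and nodes in $W_\I$ are sources in $\GG^\I$, the condition reduces to $u_i \in C \cup \an_\GG(C)$. Minimality of $\pi_{bz}$ then yields a case split: if $u_i \in Z$, shorten $\pi_{bz}$ and iterate; if $u_i \in B'$, then $u_i$ and $z$ are already linked by a strictly shorter bidirected path in $D$, contradicting minimality via an outer induction on the length of the chosen $\pi_{bz}$; if $u_i \in C$, then $u_i \in S$ and the criterion is met directly. The case $u_i \in [p] \setminus A$ I would handle by rerouting within $D$, using that both endpoints $b,z$ lie in the ancestrally closed set $A$ together with the bidirected connectivity of $D$ to bypass excursions outside $A$. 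The subtlety at the junction $z$ (in particular, if $z \in C$, a non-collider $z$ would block the path) is avoided by initially choosing $z$ closest to $b$ in $D$ along a bidirected path among $Z \cap D$, and then case-analysing the orientation of the first edge of $\pi_{z\omega}$.

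For (2), assume $D$ contains $c \in C'$ and $z \in Z$, and fix $b^\star \in B'$ with $b^\star \leftrightarrow c$ or $b^\star \rightarrow c$ in $\GG$, which exists by the definition of $C'$. In the spouse case, $b^\star \in D$, so part (1) applied to $b^\star,z$ immediately finishes the argument. In the parent case, prepend $b^\star \rightarrow c$ to a bidirected path from $c$ to $z$ in $D$, then to $\pi_{z\omega}$. The vertex $c$ becomes a collider on this concatenation (arrowhead in from both sides), and since $c \in C \subseteq S$ the collider criterion holds at $c$. The remaining verification along the bidirected stretch from $c$ to $z$ and at the junction $z$ is identical to that in (1), again forcing $b^\star \in Z$ and contradicting $b^\star \in B'$.

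The main obstacle is the rerouting step highlighted in (1): replacing a bidirected path inside a district $D$ by one whose internal vertices all lie in $\an_\GG(C)$, given that the endpoints lie in the ancestrally closed set $A = \an_\GG(B\cup C)$. I expect this to require a short auxiliary district lemma, proved by induction on path length and exploiting that an excursion of a bidirected path in $D$ outside $A$ can be contracted without leaving $D$, using the bidirected connectivity of $D$ together with the fact that $b,z$ are ancestors of $B \cup C$.
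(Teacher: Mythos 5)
Your overall strategy is the paper's: propagate $m$-connectedness to $\omega_I$ along the bidirected path inside the district, and in part (2) use the arrowhead at $c'\in C$ coming from $B'$ to turn $c'$ into a collider that is opened by conditioning. But there are two genuine gaps. The first is the engine of the whole argument, which you never supply: every vertex of $Z$ is an ancestor of $C$ in $\GG$. This is exactly what is needed to open the junction vertex $z$ (and each internal vertex, once it is shown to lie in $Z$) as a collider when you glue the bidirected path onto $\pi_{z\omega}$; your ``case-analysing the orientation of the first edge of $\pi_{z\omega}$'' has no way to handle the case where that edge has an arrowhead at $z$, since then $z$ is a collider on the concatenation and must lie in $C\cup\an_\GG(C)$. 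The paper derives this fact from $Z\subseteq A=\an_\GG(B\cup C)$: if some $z\in Z$ had a directed path to $B$ avoiding $C$, appending it to an $m$-connecting path from $\omega_I$ to $z$ would $m$-connect $\omega_I$ to $B$ given $C\cup W_\I\setminus\{\omega_I\}$, contradicting the hypothesis; hence each $z\in Z$, being an ancestor of $B\cup C$ but not of $B$ along a $C$-avoiding path, is an ancestor of $C$. With this in hand the propagation can be done one bidirected edge at a time ($v_i\in Z$ gives $v_i\in\an_\GG(C)$, which gives $v_{i+1}$ $m$-connected to $\omega_I$), and your outer induction on path length and minimality bookkeeping become unnecessary.

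The second gap is the step you yourself flag as the main obstacle, and it cannot be repaired in the form you propose: a district of $\GG$ can meet $A$ in a set that is not bidirected-connected within $A$, so no rerouting lemma is available. Indeed, for districts of the full graph statement (1) fails as literally written: take $\GG$ with edges $1\leftrightarrow 2$, $2\leftrightarrow 3$, $3\rightarrow 4$, with $\I=\{\emptyset,\{3\}\}$, $B=\{1\}$, $C=\{4\}$. Then $B$ is $m$-separated from $\omega_{\{3\}}$ given $\{4\}$ (the unique path $\omega_{\{3\}}\rightarrow 3\leftrightarrow 2\leftrightarrow 1$ is blocked at the collider $2\notin C\cup\an_\GG(C)$), $A=\{1,3,4\}$, $Z=\{3\}$, $B'=\{1\}$, and yet $1$ and $3$ lie in the common district $\{1,2,3\}$ of $\GG$. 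The resolution is that the lemma is only ever invoked for the heads $H\in[A]_\GG$, i.e., for districts of induced subgraphs of $A$; there every vertex of the connecting bidirected path lies in $A$ by construction, so each internal vertex falls into $Z\sqcup B'\sqcup C$ and your problematic case never arises (this is also what the paper's own proof implicitly uses when it concludes $v_1\in Z$ from $v_1$ being $m$-connected to $\omega_I$). So the fix is to restrict the claim to districts inside $A$, not to search for a rerouting argument.
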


\begin{proof}
For statement (1), suppose on the contrary that there exists a district $D\in\dis(\GG)$ that contains $z\in Z$ and $b^\prime\in B^\prime$.  
By definition of a district, since $z,b^\prime\in D$, we know that they are path-connected in $\GG_{\leftrightarrow}$ by a path consisting of only bidirected arrows.  
Let $\pi = \langle v_0:=z,v_1,\ldots, v_m:=b^\prime\rangle$ denote such a path.  
Then since $z\in A\setminus(B\cup C)$, we know that $z$ is ancestral to either some $b\in B$ or $c\in C$. 
If $z$ is ancestral to $b\in B$, then the directed path from $z$ to $b$ must be blocked by some $c\in C$.  
This is because, by assumption, every $b\in B$ is $m$-separated from $\omega_I$ given $C\cup W_\I\setminus\{\omega_I\}$ in $\GG^\I$.  
Since $z\notin C$ and $z$ is $m$-connected to $\omega_I$ given $C\cup W_\I\setminus\{\omega_I\}$ in $\GG^\I$, concatenating such an $m$-connecting path from $\omega_I$ to $z$ with the directed path from $z$ to $b$ would result in an $m$-connecting path from $\omega_I$ to $b$, which would be a contradiction.  
Hence, $z$ is ancestral to some $c\in C$, and thus there exists some directed path from $z$ to $c$.  
Therefore, $v_1$ is $m$-connected to $\omega_I$ given $C\cup W_\I\setminus\{\omega_I\}$ in $\GG^\I$.  
In other words, $v_0,v_1\in Z$.  
Repeating this argument in an inductive fashion demonstrates that in fact $v_0,\ldots, v_m\in Z$.  
However, this contradicts the assumption that $b^\prime \in B^\prime$.  
Thus, no node in $Z$ can be in the same district as a node in $B^\prime$.   

To see statement (2) holds, we can use a similar inductive argument.  
Suppose on the contrary that there is a district $D\in \dis(\GG)$ containing a node $z\in Z$ and a node $c^\prime\in C^\prime$.  
Since $c^\prime$ and $z$ are in $D$, a district in $\GG$, then they are path-connected by a path $\pi = \langle v_0:=z,v_1,\ldots, v_m:=c^\prime\rangle$ consisting of only bidirected arrows.  
By the argument for statement (1), we know that $\pi$ is an $m$-connecting path given $C\cup W_\I\setminus\{\omega_I,c^\prime\}$.  
Since $c^\prime\in C^\prime$, then there exists $b^\prime \in B^\prime$ and a path $\pi^\prime = \langle v_0:=z,v_1,\ldots, v_m, b^\prime\rangle$ such that $v_m$ is a collider on $\pi^\prime$.  
Hence, $\pi^\prime$ is an $m$-connecting path given $C\cup W_\I\setminus\{\omega_I\}$ in $\GG^\I$, which contradicts the fact that $b^\prime\in B^\prime$.  
Hence, a node of $Z$ and a node of $C^\prime$ cannot be in the same district in $\GG$.
\end{proof}

\begin{remark}
\label{rmk: parents of heads}
By a similar argument as in the proof of Lemma~\ref{lem: partitioning} (see the appendix), we see that the parents of heads $H$ containing elements of $B^\prime$ and/or $C^\prime$ must be contained in $B^\prime\cup C$.  
Similarly, the parents of heads $H$ containing elements of $Z$ and/or $C^{\prime\prime}$ are contained in $Z\cup C^{\prime\prime}$.  
\end{remark}

Given Lemma~\ref{lem: partitioning} and Remark~\ref{rmk: parents of heads}, we can generalize \cite[Proposition 3.8]{YKU18} to ADMGs, as stated in Theorem~\ref{thm: local I-Markov equivalence characterization}.

\subsection{Proof of Theorem~\ref{thm: local I-Markov equivalence characterization} } 
Suppose first that $(f^{(I)})_{I\in\I}$ satisfies the $\I$-Markov property with respect to $\GG^\I$.  
Since $\GG$ is an ADMG, then by Theorem~\ref{thm: richardson factorization} and condition (1) of the $\I$-Markov property, we know that for all $A\in[A]_\GG$
\begin{equation}
\label{eqn: proof factorization}
f^{(I)}(x_A) = \prod_{H\in[A]_\GG}f^{(I)}(x_H\mid x_{\tail(H)}).
\end{equation}
Consider now $H\in[A]_\GG$ for which $H\cap I = \emptyset$. 
Note that since $\GG^\I$ is also an ADMG, and since $H\cap I = \emptyset$, then $\tail_{\GG^\I}(H) = \tail_\GG(H)$.  
Since $\tail_{\GG^\I}(H)$ is the Markov blanket of $H$ in $\GG^\I$ \cite[Section 3.1]{R09}, then $\omega_I$ is $m$-separated from $H$ given $\tail_\GG(H)\cup W_\I\setminus \{\omega_I\}$ in $\GG^\I$.  
So by condition (2) of the $\I$-Markov property, we can replace $f^{(I)}(x_H\mid x_{\tail(H)})$ with $f^{(\emptyset)}(x_H\mid x_{\tail(H)})$ in the product in equation~\eqref{eqn: proof factorization}.  
Since this is independent of the choice of $I\in \I$, Lemma~\ref{lem: equating collections} implies that $(f^{(I)})_{I\in\I}\in\M_\I(\GG)$.  

Conversely, suppose that $(f^{(I)})_{I\in\I}\in\M_\I(\GG)$.  
Notice first that since for all $I\in \I$, we have $f^{(I)}\in\M(\GG)$, then $(f^{(I)})_{I\in\I}$ satisfies condition (1) of the $\I$-Markov property.  
So it remains to check that $(f^{(I)})_{I\in\I}$ satisfies condition (2) of the $\I$-Markov property with respect to $\GG^\I$. 
To this end, assume that $A,B,B^\prime, C,C^\prime, C^{\prime\prime}, Z\subset[p]$ are subsets as specified in Lemma~\ref{lem: partitioning} and Remark~\ref{rmk: parents of heads}, and set $\hat{\I} :=\{\emptyset,I\}$.  
Note, by Lemma~\ref{lem: partitioning} and Remark~\ref{rmk: parents of heads}, we can partition $[A]_\GG$ as 
$
[A]_\GG = Q \sqcup Y \sqcup M \sqcup N,
$
where 
\begin{equation*}
\begin{split}
Q &:=\{ H\in [A]_\GG \mid B^\prime \cap H \neq \emptyset\},\\
Y &:=\{ H\in [A]_\GG \mid Z \cap H \neq \emptyset\},\\
M &:=\{ H\in [A]_\GG \mid B^\prime \cap H = \emptyset, C^\prime \cap H \neq \emptyset\}, \text{ and }\\
N &:=\{ H\in [A]_\GG \mid Z \cap H = \emptyset, C^{\prime\prime} \cap H \neq \emptyset\}.\\
\end{split}
\end{equation*}
By Lemma~\ref{lem: equating collections}, we know that for $\hat{I}\in\hat{\I}$, the marginal density $f^{(\hat{I})}(x_A)$ factors according to equation~\eqref{eqn: ADMG interventional factorization}, and so
\begin{equation*}
\begin{split}
f^{(\hat{I})}(x_A)
&= f^{(\hat{I})}(x_{B^\prime},x_{C^\prime},x_{C^{\prime\prime}},x_Z),\\
&= \prod_{H\in Q}f^{(\hat{I})}(x_H\mid x_{\tail(H)})\prod_{H\in Y}f^{(\hat{I})}(x_H\mid x_{\tail(H)})\\
	&\hspace{1in}\times\prod_{H\in M}f^{(\hat{I})}(x_H\mid x_{\tail(H)})\prod_{H\in N}f^{(\hat{I})}(x_H\mid x_{\tail(H)}),\\
&= \prod_{H\in Q}f^{(\emptyset)}(x_H\mid x_{\tail(H)})\prod_{H\in Y}f^{(\emptyset)}(x_H\mid x_{\tail(H)})\\
&\hspace{1in}\times\prod_{H\in M}f^{(\hat{I})}(x_H\mid x_{\tail(H)})\prod_{H\in N}f^{(\hat{I})}(x_H\mid x_{\tail(H)}),\\
&= g(x_{B^\prime},x_{C^\prime})h(x_Z,x_{C^{\prime\prime}};\hat{I}), 
\end{split}
\end{equation*}
where $g(x_{B^\prime},x_{C^\prime})$ is the product of the conditional factors over $Q$ and $Y$, and $h(x_Z,x_{C^{\prime\prime}};\hat{I})$ is the product of the conditional factors over $M$ and $N$.  
From this point, we can now use an analogous argument to that used in the proof of Proposition 3.8 in \cite{YKU18}.
Namely, marginalizing out the variables for nodes in $Z$ and $B^\prime\setminus B$ gives
\[
f^{(\hat{I})}(x_B,x_C) = \hat{g}(x_B,x_{C^\prime})\hat{h}(x_{C^{\prime\prime}};\hat{I}),
\]
where $\hat{g}(x_B,x_{C^\prime}):= \int_{B^\prime\setminus B}g(x_{B^\prime},x_{C^\prime})$ and $\hat{h}(x_{C^{\prime\prime}};\hat{I}) := \int_{Z}h(x_Z,x_{C^{\prime\prime}};\hat{I})$.  
By conditioning on $C$, it is then straightforward to check that $f^{(\hat{I})}(x_B\mid x_C) = f^{(\emptyset)}(x_B\mid x_C)$ for $\hat{I}\in\hat{\I}$.  
Hence, $(f^{(I)})_{I\in\I}$ satisfies condition (2) of the $\I$-Markov property, completing the proof.
\hfill$\square$

\subsection{Proof of Theorem~\ref{thm: causal alignment}} 
To prove Theorem~\ref{thm: causal alignment}, we use the following lemma.
\begin{lemma}
\cite[Proposition 4]{RERS17}
\label{lem: d and m-separation}
Let $\GG = ([p]\sqcup L,E)$ be a DAG where $[p]\sqcup L$ is a disjoint union.  
For disjoint subsets $A,B,C\subset [p]$, with $C$ possibly empty, $A$ and $B$ are $d$-separated given $C$ in $\GG$ if and only if $A$ and $B$ are $m$-separated given $C$ in the latent projection $\GG([p])$.  
\end{lemma}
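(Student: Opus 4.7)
The plan is to prove both implications by contrapositive: first, given an $m$-connecting path in $\GG([p])$ build a $d$-connecting path in $\GG$; second, given a $d$-connecting path in $\GG$ build an $m$-connecting path in $\GG([p])$. The two constructions are asymmetric, the second being the harder one.

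For the ``lift'' direction (used to show $d$-separation $\Rightarrow$ $m$-separation), suppose $\pi$ is an $m$-connecting path in $\GG([p])$ between $A$ and $B$ given $C$. By Definition~\ref{def: latent projection}, each directed edge $i \rightarrow j$ of $\pi$ is witnessed by a directed path $i \rightarrow \ell_1 \rightarrow \cdots \rightarrow j$ in $\GG$ whose interior vertices lie in $L$, and each bidirected edge $i \leftrightarrow j$ is witnessed by a path in $\GG$ whose non-endpoint vertices are non-colliders in $L$ and whose extremal edges have arrowheads at $i$ and $j$. I would concatenate these witnessing paths and, if necessary, shorten the resulting walk to a path $\pi'$ in $\GG$. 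I would then verify two things: at each junction vertex $v \in [p]$ the marks on $\pi'$ agree with those on $\pi$, so the collider status at $v$ is preserved; and every newly introduced vertex of $\pi'$ lies in $L$ and is a non-collider on $\pi'$ by construction. Since $C \subset [p]$, every non-collider of $\pi'$ at an $L$-vertex is automatically outside $C$, every non-collider of $\pi'$ at a $[p]$-vertex is outside $C$ by $m$-connection of $\pi$, and every collider of $\pi'$ sits at a $[p]$-vertex and was a collider of $\pi$, hence in $C \cup \an_{\GG([p])}(C)$. Because directed edges of $\GG([p])$ themselves expand to directed paths in $\GG$, one has $\an_{\GG([p])}(C) \subset \an_\GG(C)$, so $\pi'$ is $d$-connecting given $C$.

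For the ``project'' direction, I would start with a $d$-connecting path $\sigma$ in $\GG$ and extract the ordered subsequence $v_1, \ldots, v_n$ of its $[p]$-vertices. Between consecutive $v_k, v_{k+1}$ sits a segment $S_k$ with all interior vertices in $L$; the plan is to install an edge in $\GG([p])$ between $v_k$ and $v_{k+1}$ whose marks at its endpoints match those of $S_k$. When the marks of $S_k$ at $v_k, v_{k+1}$ are (tail, arrowhead), the $d$-connecting hypothesis forces all interior vertices of $S_k$ to be chain vertices, yielding a directed path $v_k \rightarrow \cdots \rightarrow v_{k+1}$ through $L$ and hence $v_k \rightarrow v_{k+1}$ in $\GG([p])$; the (arrowhead, tail) case is symmetric. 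When both marks are arrowheads and every interior vertex of $S_k$ is a non-collider, the witnessing path of a bidirected edge is present verbatim in $\GG$, giving $v_k \leftrightarrow v_{k+1}$ in $\GG([p])$. The remaining pattern (tail, tail) is incompatible with having only non-colliders in $S_k$, so $S_k$ must contain a collider $\ell^* \in L$, and by $d$-connection $\ell^* \in \an_\GG(C)$; I would then follow a directed path from $\ell^*$ to some $c \in C$ and reroute the projected path through $c$, using that $v_k \rightarrow c$ and $v_{k+1} \rightarrow c$ are edges of $\GG([p])$ (or, more generally, that there are directed paths from each to $c$ through $[p]$). The inserted $c$ lies in $C$ and serves as an open collider. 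Because the marks at each $v_k$ are preserved in every case, the constructed sequence forms an $m$-connecting path in $\GG([p])$ given $C$ after discarding any repeated vertices.

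The main obstacle is the project direction, and specifically the tail-tail mark pattern on the latent segments $S_k$, since no single edge of $\GG([p])$ directly connects $v_k$ and $v_{k+1}$ in that configuration. Verifying that every such detour through $C \cup \an_\GG(C)$ can be installed without creating a forbidden collider (one not in $C \cup \an_{\GG([p])}(C)$) or a forbidden non-collider (one landing in $C$) elsewhere along the new path is the technical core of the argument, and is where care with the ancestor structure within $\GG([p])$ is required.
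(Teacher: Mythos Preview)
The paper does not prove this lemma; it is quoted verbatim from \cite[Proposition~4]{RERS17} (and attributed originally to \cite{V93}) and used as a black box in the proof of Theorem~\ref{thm: causal alignment}. So there is no in-paper argument to compare against.

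That said, your sketch has a genuine gap in the project direction. In the case where the marks of the latent segment $S_k$ at $v_k,v_{k+1}$ are (tail, arrowhead), you assert that ``the $d$-connecting hypothesis forces all interior vertices of $S_k$ to be chain vertices.'' This is false: $d$-connection only requires that each interior collider lie in $\an_\GG(C)$, and since $C\subset[p]$ while the interior of $S_k$ lies in $L$, nothing prevents an interior collider $\ell^*\in L\cap\an_\GG(C)$ from appearing on $S_k$. The same issue arises in the (arrowhead, arrowhead) case, which you handle only under the additional hypothesis that every interior vertex is a non-collider. Consequently your case split is incomplete: the rerouting-through-$C$ argument you reserve for the (tail, tail) pattern is in fact needed for \emph{every} mark pattern whenever $S_k$ contains a latent collider. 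The correct organization is to first split on whether $S_k$ has an interior collider, and only then read off the endpoint marks in the collider-free subcase.

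Two smaller points. First, in the lift direction, shortening the concatenated walk to a path can change collider status at the excision points; you should either argue that a $d$-connecting walk implies a $d$-connecting path (a standard but nontrivial reduction) or work with walks throughout. Second, in your reroute, the claim that $v_k\rightarrow c$ is an edge of $\GG([p])$ is not justified; what you actually obtain is a directed path in $\GG([p])$ from (the first observed vertex after) the collider down to $C$, and splicing this in may land on observed non-colliders that themselves lie in $C$, which you must then handle recursively. You flag this as ``the technical core,'' and it is, but the earlier error means the case analysis feeding into it is already incorrect.
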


Combining Lemma~\ref{lem: d and m-separation} with our Theorem~\ref{thm: local I-Markov equivalence characterization}, we can prove Theorem~\ref{thm: causal alignment}:

\subsubsection{Proof of Theorem~\ref{thm: causal alignment}}
By Theorem~\ref{thm: local I-Markov equivalence characterization}, it suffices to show that $(\tilde{f}^{(I)})_{I\in\I}$ satisfies the $\I$-Markov property with respect to $\GG^\I$.  
Since $f^{(I)}\in\M(\GG)$ for all $I\in\I$ then $\tilde{f}^{(I)}$ is Markov to $\GG([p])$ for all $I\in\I$ \cite[Theorem~7.1]{RS02}.  
Hence, $(\tilde{f}^{(I)})_{I\in\I}$ satisfies the first condition of the $\I$-Markov property stated in Definition~\ref{def: ADMG local I-Markov properties}.
To see that $(\tilde{f}^{(I)})_{I\in\I}$ satisfies the second condition, we must show that whenever $C\cup W_\I\setminus\{w_I\}$ $m$-separates $B$ and $w_I$ in $\GG([p])^\I$ then $\tilde{f}^{(\emptyset)}(X_B\mid X_C) = \tilde{f}^{(I)}(X_B\mid X_C)$. 

To this end, notice first that since $B,C\subset[p]$ we have for all $I\in\I$ that
\begin{equation*}
\begin{split}
f^{(I)}(X_B,X_C) &= \int_{L\cup[p]\setminus(B\cup C)}f^{(I)} = \int_{[p]\setminus(B\cup C)}\int_Lf^{(I)} = \tilde{f}^{(I)}(X_B,X_C),	\\
\end{split}
\end{equation*}
and similarly $f^{(I)}(X_C) = \tilde{f}^{(I)}(X_C)$.
It then follows that 
\begin{equation}
\label{eqn: latent}
f^{(I)}(X_B\mid X_C) = \tilde{f}^{(I)}(X_B\mid X_C)
\end{equation}
for all $I\in\I$.
Moreover, since $\GG^\I$ is a DAG and $\GG^\I([p]) = \GG([p])^\I$, then by Lemma~\ref{lem: d and m-separation} we know that $B$ is $d$-separated from $w_I$ given $C\cup W_\I\setminus\{w_I\}$ in $\GG^\I$ if and only if $B$ is $m$-separated from $w_I$ given $C\cup W_\I\setminus\{w_I\}$ in $\GG([p])^\I$.
Since $(f^{(I)})_{I\in\I}\in\M_\I(\GG)$, it follows from \cite[Proposition~3.8]{YKU18} that $f^{(\emptyset)}(X_B\mid X_C) = f^{(I)}(X_B\mid X_C)$ whenever $B$ is $d$-separated from $w_I$ given $C\cup W_\I\setminus\{w_I\}$ in $\GG^\I$. 
So by Lemma~\ref{lem: d and m-separation} and \eqref{eqn: latent}, we conclude that $\tilde{f}^{(\emptyset)}(X_B\mid X_C) = \tilde{f}^{(I)}(X_B\mid X_C)$ whenever $B$ is $d$-separated from $w_I$ given $C\cup W_\I\setminus\{w_I\}$ in $\GG([p])^\I$. 
This completes the proof.
\hfill$\square$

\subsection{Proof of Theorem~\ref{thm: coincide}} 
The equivalence of $(1)$ and $(2)$ was shown in Theorem~\ref{thm: main}.  
So it only remains to show the equivalence of $(1)$ and $(3)$.
Suppose first that $\GG$ and $\HH$ are $\I$-Markov equivalent.  
Then $\GG^\I$ and $\HH^\I$ have the same set of $m$-separation statements.  
Hence, by Theorem~\ref{thm: local I-Markov equivalence characterization}, $\M_\I(\GG) = \M_\I(\HH)$.  

Conversely, suppose now that $\GG$ and $\HH$ on node set $[p]$ are not $\I$-Markov equivalent.  
It follows that (without loss of generality) there exists an $m$-separation statement $\langle A,B \mid C \rangle \in\J(\GG^\I)$ that is not contained in $\J(\HH^\I)$.  
By \cite[Lemma 5]{KJSB19}, it follows that at least one of the following is true:
\begin{enumerate}
	\item There exist $X,Y,Z\subset [p]$ such that $\langle X,Y \mid Z, W_\I\rangle \in \J(\GG^\I)$ and $\langle X,Y \mid Z, W_\I\rangle\notin\J(\HH^\I)$, and/or
	\item there exist $T,S\subset [p]$ and $\omega_I\in W_\I$ such that $\langle \omega_I,T \mid S,W_\I\setminus\{\omega_I\}\rangle\in\J(\GG^\I)$ and $\langle \omega_I,T \mid S,W_\I\setminus\{\omega_I\}\rangle\notin\J(\HH^\I)$. 
\end{enumerate}
It is important to note that, while the augmented graph used by \cite{KJSB19} is different than our $\I$-graphs $\GG^\I$, their proof of Lemma $5$ is independent of the choice of nodes augmenting the graph (i.e., the choice for the set $W_\I$).  
Thus, we can readily apply it to our situation here as well.  
Therefore, we can now assume that the CI relation in question $\langle A,B \mid C \rangle$ is of one of the two forms given in (1) and (2).  
Now, by Theorem~\ref{thm: local I-Markov equivalence characterization}, since $\M_\I(\GG) = \M_\I(\HH)$ is equivalent to the $\I$-Markov property holding, we would like to show that there exists an interventional setting $(f^{(I)})_{I\in\I}$ that is $\I$-Markov to $\HH^\I$ but not $\I$-Markov to $\GG^\I$.  
To do so, we can use the same construction as in the proof of \cite[Lemma 4]{KJSB19}.  
For the sake of completeness, we recall some of these details now.  

Namely, let us first suppose that we are in case (1), and there is a CI relation $\langle X,Y \mid Z\rangle\in\J(\GG^\I)$ that is not in $\J(\HH^\I)$ and that $X,Y,Z\subset [p]$. 
Since $W_\I$ contains only source nodes, then $\langle X,Y \mid Z\rangle\in\J(\GG)$ and $\langle X,Y \mid Z\rangle \notin\J(\HH)$.  
Since there always exist distributions faithful to a given ancestral graph (see, for instance, \cite{S13}), we can pick an observational distribution $f^{(\emptyset)}$ that is faithful to $\HH$.  
Any sequence of interventions targeting $\I$, say $(f^{(I)})_{I\in\I}$, with respect to the observational density $f^{(\emptyset)}$ will then belong to $\M_\I(\HH)$ but not $\M_\I(\GG)$.  
This is because $f^{(\emptyset)}\notin\M(\GG)$.  
Hence, $\M_\I(\GG) \neq \M_\I(\HH)$.  

Suppose now that we are in case (2).  
Then there exists $T,S\subset [p]$ and $\omega_{\hat{I}}\in W_\I$ such that $\langle \omega_{\hat{I}},T \mid S,W_\I\setminus\{\omega_{\hat{I}}\}\rangle\in\J(\GG^\I)$ and $\langle \omega_{\hat{I}},T \mid S,W_\I\setminus\{\omega_{\hat{I}}\}\rangle\notin\J(\HH^\I)$. 
In this case, we can use the construction from the proof of \cite[Lemma 4]{KJSB19}, which is a natural generalization of the construction used in the proof of \cite[Lemma 3.10]{YKU18}.  
Here, we refer the reader to the proof of \cite[Lemma 4]{KJSB19} for complete details, and we simply note that it is sufficient to apply their techniques with respect to $I := \hat{I}$ and $J := \emptyset$.  
The result is an interventional setting $(f^{(I)})_{I\in\I}\in\M_\I(\HH)$ such that for $\hat{I}\in\I$ we have $f^{(\emptyset)}(X\mid Z) \neq f^{(\hat{I})}(X \mid Z)$.  
Since $\langle \omega_{\hat{I}},T \mid S,W_\I\setminus\{\omega_{\hat{I}}\}\rangle\in\J(\GG^\I)$, it follows that $(f^{(I)})_{I\in\I}\notin\M_\I(\GG)$.  
Hence, $\M_\I(\GG)\neq\M_\I(\HH)$, which completes the proof. 
\hfill$\square$

\end{document}